\theoremstyle{plain}
\newtheorem{thm}{Theorem}[section]
\newtheorem{prop}[thm]{Proposition}
\newtheorem{lem}[thm]{Lemma}
\newtheorem{cor}[thm]{Corollary}
\newtheorem{pro}[thm]{Problem}
\theoremstyle{definition}
\newtheorem{defn}[thm]{Definition}
\theoremstyle{remark}
\newtheorem{rmk}[thm]{Remark}
\begin{document}

\title[On the Takai duality for $L^{p}$ operator crossed products]{On the Takai duality for $L^{p}$ operator crossed products}

\author[Z. Wang]{Zhen Wang}
\curraddr{Department of Mathematics\\Jilin University\\Changchun 130012\\P.~R. China}
\address{Key Laboratory of Applied Mathematics of Fujian Province University\\School of Mathematics and Finance \\Putian University \\Putian 351100\\
P. R. China}
\email{zwangmath@jlu.edu.cn}

\author[S. Zhu]{Sen Zhu}
\address{Department of Mathematics\\Jilin University\\Changchun 130012\\P.~R. China}
\email{zhusen@jlu.edu.cn}

\subjclass[2010]{Primary 22D35 , 47L65; Secondary 43A25, 47L10}
\keywords{$L^p$ operator crossed products, Takai duality, Locally compact Abelian groups, $L^p$ operator algebras}

\begin{abstract}
The aim of this paper is to study a problem raised by N. C. Phillips concerning the existence of Takai duality for $L^p$ operator crossed products $F^{p}(G,A,\alpha)$, where $G$ is a locally compact Abelian group, $A$ is an $L^{p}$ operator algebra and $\alpha$ is an isometric action of $G$ on $A$. Inspired by D. Williams' proof for the Takai duality theorem for crossed products of $C^*$-algebras, we construct a homomorphism  $\Phi$ from $F^{p}(\hat{G},F^p(G,A,\alpha),\hat{\alpha})$ to $\mathcal{K}(l^{p}(G))\otimes_{p}A$ which is a natural $L^p$-analog of D. Williams' map. For countable discrete Abelian groups $G$ and separable unital $L^p$ operator algebras $A$ which have unique $L^p$ operator matrix norms,
we show that $\Phi$ is an isomorphism if and only if either $G$ is finite or $p=2$; in particular, $\Phi$ is an isometric isomorphism in the case that $p=2$. Moreover, it is proved that $\Phi$ is equivariant for the double dual action $\hat{\hat{\alpha}}$ of $G$ on $F^p(\hat{G},F^p(G,A,\alpha),\hat{\alpha})$ and the action $\mathrm{Ad}\rho\otimes\alpha$ of $G$ on $\mathcal{K}(l^p(G))\otimes_p A$.
\end{abstract}

\date{\today}
\maketitle


\section{Introduction}
The Takai duality theorem is an important result concerning crossed products of $C^{*}$-algebras.
Recall that a $C^{*}$-dynamical system is a triple $(A,G,\alpha)$ consisting of a $C^{*}$-algebra $A$, a locally compact group $G$ and a continuous homomorphism $\alpha:G\rightarrow \mathrm{Aut}(A)$, where ${\rm Aut}(A)$ is the group of automorphisms of $A$.
Let $A\rtimes_\alpha G$ be the full crossed product of $A$ by $G$. If $G$ is an Abelian group with dual group $\hat{G}$, then there is a dual action $\hat{\alpha}$ of $\hat{G}$ on $A\rtimes_\alpha G$ which is given by $\hat{\alpha}_\gamma:C_c(G,A)\rightarrow C_c(G,A),~\hat{\alpha}_\gamma(f)(s):=\overline{\gamma(s)}f(s)$ for all $\gamma\in \hat{G}, s\in G$ and $f\in C_c(G,A)$, and $\hat{\alpha}_\gamma$ can extend to $A\rtimes_\alpha G$ by continuity. Therefore $(A\rtimes_{\alpha}G,\hat{G},\hat{\alpha})$ is a $C^*$-dynamical system called the
dual system.
The Takai duality theorem \cite{Takai} says that the iterated crossed product $(A\rtimes_{\alpha}G)\rtimes _{\hat{\alpha}}\hat{G}$ is $*$-isomorphic to $A\otimes \mathcal{K}(L^{2}(G))$,
where $\mathcal{K}(L^{2}(G))$ denotes the algebra of compact operators on $L^{2}(G)$.
Moreover, this isomorphism is also equivariant for the double dual action $\hat{\hat{\alpha}}$ of $G$ on $(A\rtimes_{\alpha}G)\rtimes _{\hat{\alpha}}\hat{G}$ and the action $\alpha\otimes \mathrm{Ad}\rho$ of $G$ on $A\otimes \mathcal{K}(L^{2}(G))$.
This result has many applications in the $K$-theory of operator algebras. For example, it can be used to prove Connes' Thom isomorphism theorem (see \cite[Theorem 10.2.2]{Blackadar} or \cite{Connes}). In 1986, I. Raeburn \cite{Raeburn} gave another proof of Takai's theorem by exploiting the universal properties of crossed products of $C^*$-algebras. D. Williams provided a variation of I. Raeburn's proof, with an extra contribution
by S. Echterhoff (see \cite[Theorem 7.1]{Williams}).



%


The aim of this paper is to study a problem raised by N. C. Phillips \cite[Problem 8.7]{PlpsOpenQues} concerning
the Takai duality for $L^p$ operator crossed products. To proceed, we first introduce some notations and terminology. The reader is referred to \cite{Gardella and Thiel convolution} or \cite{N. C. Phillips Lp} for more details.

The class of $L^p$ operator algebras is a natural generalization of operator algebras on Hilbert spaces,
and was first studied in Herz's influential paper \cite{Herz} on harmonic analysis on $L^p$ spaces.
People's renewed interests in $L^p$ operator algebras were inspired by N. C. Phillips
\cite{N. C. Phillips Lp}, who showed that a few of the important constructions of $C^*$-algebras can be
generalized to the setting of $L^p$ operator algebras.
Recently the study of $L^p$ operator algebras has received much attention, and
a number of authors have made significant contributions to it (see \cite{Blecher and Phillips,Choi and Gardela,Gardella and Thiel groupoid,Gardella and Thiel,Gardella and Thiel quotient,Gardella and Thiel convolution operators,Gardella and Thiel convolution,Gardella modern,Hejazian and Pooya,N. C. Phillips Lp}).

The $L^p$ operator crossed products were introduced by N. C. Phillips \cite{N. C. Phillips Lp} with the aim to compute the $K$-theory groups of $L^{p}$ Cuntz algebras. Recall that a Banach algebra $A$ is an $L^{p}$ {\it operator algebra} if it can be isometrically represented on an $L^{p}$ space $(p\in [1, \infty))$.
Let $\pi:A\rightarrow \mathcal{B}(L^p(X,\nu))$ be a representation of $A$. We say that $\pi$ is {\it $\sigma$-finite} if $\nu$ is $\sigma$-finite.
Let $G$ be a locally compact group. Then there exists a left Haar measure $\mu$ on $G$. An $L^{p}$ {\it operator algebra dynamical system} is a triple $(G,A,\alpha)$ consisting of a locally compact group $G$, an $L^{p}$ operator algebra $A$ and a continuous homomorphism $\alpha:G\rightarrow \mathrm{Aut}(A)$, where $\mathrm{Aut}(A)$ is the group of isometric automorphisms of $A$.
A {\it contractive covariant representation} of $(G,A,\alpha)$ on an $L^{p}$ space $E$ is a pair $(\pi,v)$ consisting of a nondegenerate contractive homomorphism $\pi:A\rightarrow \mathcal{B}(E)$ and an isometric group representation $v:G\rightarrow \mathcal{B}(E)$, satisfying the covariance condition $$v_{t}\pi(a)v_{t^{-1}}=\pi(\alpha_{t}(a))$$ for all $t\in G$ and $a\in A$.
A covariant representation $(\pi,v)$ of $(G,A,\alpha)$ is {\it $\sigma$-finite} if $\pi$ is $\sigma$-finite.

Denote by $C_{c}(G,A,\alpha)$ the vector space of continuous compactly supported functions $G\rightarrow A$, made into an algebra over $\mathbb{C}$ with product given by twisted convolution, that is, $$(f*g)(t):=\int_{G}f(s)\alpha_{s}(g(s^{-1}t))d\mu(s)$$ for all $f,g\in C_{c}(G,A,\alpha)$ and $t\in G$. The {\it integrated form} of $(\pi,v)$ is the nondegenerate contractive homomorphism $\pi\rtimes v: C_{c}(G,A,\alpha)\rightarrow \mathcal{B}(E)$ given by $$(\pi\rtimes v)(f)(\xi):=\int_{G}\pi(f(t))v_{t}(\xi)d\mu(t)$$ for all $f\in C_{c}(G,A,\alpha)$ and for all $\xi\in E$. Denote by $\mathrm{Rep}_{p}(G,A,\alpha)$ the class of all nondegenerate $\sigma$-finite contractive covariant representations of $(G,A,\alpha)$ on $L^{p}$-spaces. The {\it full $L^p$ operator crossed product} $F^{p}(G,A,\alpha)$ is defined as the completion of $C_{c}(G,A,\alpha)$ in the norm $$||f||_{F^{p}(G,A,\alpha)}:=\sup\big\{||(\pi\rtimes v)(f)||:(\pi,v)\in \mathrm{Rep}_{p}(G,A,\alpha)\big\}.$$

Let $(G,A,\alpha)$ be an $L^{p}$ operator algebra dynamical system.
Given a nondegenerate $\sigma$-finite contractive representation $\pi_{0}:A\rightarrow \mathcal{B}(E_{0})$ on an $L^{p}$ space $E_{0}$, its associated {\it regular covariant representation} is the pair $(\pi,\lambda_{p}^{E_{0}})$ on $L^{p}(G)\otimes_{p} E_{0}\cong L^{p}(G,E_{0})$ given by
$$\pi(a)(\xi)(s):=\pi_{0}\left(\alpha_{s^{-1}}(a)\right)(\xi(s)) \quad$$ and $$\lambda_p^{E_0}(s)(\xi)(t):=\xi(s^{-1}t)$$
for all $a\in A$, $\xi\in L^{p}(G,E_{0})$, and $s,t\in G$.
We denote by $\mathrm{RegRep}_{p}(G,A,\alpha)$ the class consisting
of nondegenerate $\sigma$-finite contractive regular covariant representations of $(G,A,\alpha)$, which is clearly a subclass of $\mathrm{Rep}_{p}(G,A,\alpha)$. The {\it reduced $L^p$ operator crossed product} $F^{p}_{\lambda}(G,A,\alpha)$ is defined as the completion of $C_{c}(G,A,\alpha)$ in the norm $$||f||_{F^{p}_{\lambda}(G,A,\alpha)}:=\sup\{||(\pi\rtimes v)(f)||:(\pi,v)\in \mathrm{RegRep}_{p}(G,A,\alpha)\}.$$

By \cite[Theorem 7.1]{Phillips look like}, if $G$ is amenable, then the identity map on $C_c(G,A,\alpha)$ can be extended to an isometric isomorphism from $F^{p}(G,A,\alpha)$ onto $ F^{p}_{\lambda}(G,A,\alpha)$.
If $A=\mathbb{C}$, then it is easy to see that $F^{p}(G,A,\mathrm{id})$ is the full group $L^{p}$ operator algebra $F^{p}(G)$ and $F^{p}_{\lambda}(G,A,\mathrm{id})$ is the reduced group $L^{p}$ operator algebra $F^{p}_{\lambda}(G)$,
where $\mathrm{id}$ is the trivial action of $G$ on $\mathbb{C}$.

If, in addition, $G$ is Abelian, then we let $\hat{G}$ denote the dual group of $G$. For each $\gamma\in \hat{G}$, N. C. Phillips defined an isomorphism $\hat{\alpha}_{\gamma}:C_{c}(G,A,\alpha)\rightarrow C_{c}(G,A,\alpha)$ by $\hat{\alpha}_{\gamma}(f)(s):=\overline{\gamma(s)}f(s)$ for all $f\in C_{c}(G,A,\alpha)$ and $s\in G$ (see \cite[Definition 3.15]{N. C. Phillips Lp}). Thus $\hat{\alpha}_{\gamma}$ can extend to an isometry on $F^{p}(G,A,\alpha)$ by continuity (see \cite[Theorem 3.18]{N. C. Phillips Lp}).
Hence there is a {\it dual system} $(\hat{G},F^{p}(G,A,\alpha),\hat{\alpha})$.
So we obtain the iterated $L^{p}$ operator crossed product $F^{p}(\hat{G},F^{p}(G,A,\alpha),\hat{\alpha})$.


In {\cite[Problem 8.7]{PlpsOpenQues}}, N. C. Phillips raised the following natural problem concerning the existence of the Takai duality theorem for $L^{p}$ operator crossed products.

\begin{pro}\label{P:main}
Let $p\in[1,\infty)$, $A$ be an $L^{p}$ operator algebra, $G$ be a locally compact Abelian group, and $\alpha:G\rightarrow \mathrm{Aut}(A)$ be an isometric action of $G$ on $A$. Then is $F^{p}(\hat{G},F^{p}(G,A,\alpha),\hat{\alpha})$ isomorphic to $\mathcal{K}(L^{p}(G))\otimes_{p}A$?
Here $\mathcal{K}(L^{p}(G))$ denotes the algebra of compact operators on $L^{p}(G)$ and $\otimes_{p}$ denotes the spatial $L^{p}$ operator tensor product of $L^{p}$ operator algebras.
\end{pro}

The reader is referred to \cite{N. C. Phillips Lp} for the details of spatial $L^{p}$ operator tensor products.

The aim of this paper is to consider Problem \ref{P:main} following D. Williams' proof for the Takai duality theorem. We first briefly recall D. Williams' approach (see \cite[Theorem 7.1]{Williams}). Suppose that $G$ is a locally compact Abelian group and $(A,G,\alpha)$ is a $C^*$-dynamical system. D. Williams constructed a map $\Psi$ from $(A\rtimes_\alpha G)\rtimes_{\hat{\alpha}} \hat{G}$ to $ \mathcal{K}(L^2(G))\otimes A$ as a composition of four $*$-homomorphisms

\begin{align}\label{Eq:Williams}
\begin{CD}
(A\rtimes_\alpha G)\rtimes_{\hat{\alpha}} \hat{G} @>\Psi_1>> (A\rtimes_{\mathrm{id}} \hat{G})\rtimes_{\hat{\mathrm{id}}^{-1}\otimes \alpha} G @ >\Psi_2>>C_0(G,A)\rtimes_{\mathrm{lt}\otimes\alpha} G \\
@.     @. @VV\Psi_3V\\
@.     \mathcal{K}(L^2(G))\otimes A @<\Psi_4<< C_0(G,A)\rtimes_{\mathrm{lt}\otimes\mathrm{id}} G
\end{CD},
\end{align}
and proved that $\Psi_1, \Psi_2, \Psi_3$ and $\Psi_4$ are isomorphisms. It follows readily
that $\Psi=\Psi_4\circ\Psi_3\circ\Psi_2\circ\Psi_1$ is an isomorphism.

Our approach to the Takai duality for $L^{p}$ operator crossed products is inspired by the preceding construction.
Unless otherwise stated, we assume throughout the following that $(G,A,\alpha)$ is an $L^p$ operator algebra dynamical system,
where $G$ is a countable discrete Abelian group, and $A$ is a separable unital $L^p$ operator algebra which has unique $L^p$ operator matrix norms (see Definition \ref{matrix norm}). We construct $L^p$-analogues of the construction (\ref{Eq:Williams}) as follows:
$$\begin{CD}
F^{p}(\hat{G},F^{p}(G,A,\alpha),\hat{\alpha}) @>\Phi_1>> F^{p}(G,
F^{p}(\hat{G},A,\beta),\hat{\beta}\otimes \alpha) @ >\Phi_2>>F^{p}(G,C_{0}(G,A),\mathrm{lt}\otimes\alpha) \\
@.     @. @VV\Phi_3V\\
@.      \mathcal{K}(l^{p}(G))\otimes_{p}A @<\Phi_4<< F^{p}(G,C_{0}(G,A),\mathrm{lt}\otimes\mathrm{id})
\end{CD} .$$
Next we will explain the constructions of $\Phi_{1}, \Phi_{2}, \Phi_{3}$ and $\Phi_{4}$.


{\it The construction of $\Phi_1$}.
Let $\beta$ be the trivial action of $\hat{G}$ on $A$. For each $s\in G$, we define an isometric isomorphism $ \hat{\beta}_{s}:C(\hat{G},A,\beta)\rightarrow C(\hat{G},A,\beta)$ by $\hat{\beta}_{s}(\varphi)(\gamma):=\gamma(s)\varphi(\gamma)$ for all $\varphi\in C(\hat{G},A,\beta)$ and $\gamma\in \hat{G}$. Then we can form the tensor product action of $G$ on $F^{p}(\hat{G},A,\beta)$ by
 $(\hat{\beta}\otimes\alpha)_{s}(\varphi)(\gamma):=\gamma(s)\alpha_{s}(\varphi(\gamma)).$ Thus $(\hat{\beta}\otimes \alpha)_{s}$ extends to an element of $\mathrm{Aut}(F^{p}(\hat{G},A,\beta))$.
For each $F\in C_{c}(\hat{G}\times G,A)$, we define $\Phi_1(F)$ as
\begin{align}\label{Phi1}
\Phi_{1}(F)(s,\gamma):=\gamma(s)F(\gamma,s), \ \ \  s\in G, \gamma\in \hat{G}.
\end{align}
It will be shown that $\Phi_1$ is an isometric homomorphism with dense range (see Proposition \ref{change}). Then $\Phi_1$ extends to an isometric isomorphism from $F^{p}(\hat{G},F^{p}(G,A,\alpha),\hat{\alpha})$ onto $F^{p}(G,
F^{p}(\hat{G},A,\beta),\hat{\beta}\otimes \alpha)$.

{\it The construction of $\Phi_2$}.
Let $\mathrm{lt}:G\rightarrow \mathrm{Aut}(C_{0}(G))$ be the isometric
action induced by left translation of $G$ on itself. Then we can form the tensor product action of $G$ on $C_{0}(G,A)$ by $(\mathrm{lt}\otimes \alpha)_{t}f(s):=\alpha_{t}(f(t^{-1}s))$ for all $f\in C_{0}(G,A)$ and $s, t\in G$.
Let $\Gamma_p:F^p_\lambda(\hat{G})\rightarrow C_0(G)$ is the Gelfand transformation of $F^p_\lambda(\hat{G})$.
Since the Gelfand transformation coincides with the Fourier transformation  on $L^1(\hat{G})$, we have $\Gamma_p(\varphi)(t)=\hat{\varphi}(t)=\int_{\hat{G}} \varphi(\gamma)\overline{\gamma(t)}d\mu(\gamma)$ for all $\varphi\in L^1(\hat{G})$.
For $\varphi\in C(\hat{G})$ and $a\in A$, the linear span of the elementary tensors $\varphi\otimes a$ are dense in $F^{p}(\hat{G},A,\beta)$. Since $\beta$ is the trivial action of $\hat{G}$ on $A$,
by the universal property of $F^{p}(\hat{G},A,\beta)$ (see \cite[Theorem 3.6]{N. C. Phillips Lp}), there exists a contractive homomorphism from $F^{p}(\hat{G},A,\beta)$ to $C_{0}(G,A)$ by sending  $\varphi\otimes a$ to $\Gamma_{p}(\varphi)\otimes a$. We denote this homomorphism by $
\varphi_2$.
It will be shown that $\varphi_2$ is equivariant for the action $\hat{\beta}\otimes\alpha$ of $G$ on $F^p(\hat{G},A,\beta)$ and the action $\mathrm{lt}\otimes\alpha$ of $G$ on $C_0(G,A)$. By the universal property of $F^{p}(G,F^{p}(\hat{G},A,\beta),\hat{\beta})$ (see \cite[Theorem 3.6]{N. C. Phillips Lp}),
 there is a homomorphism $\Phi_{2}:=\varphi_2\rtimes\mathrm{id}:F^{p}(G,F^{p}(\hat{G},A,\beta),\hat{\beta}\otimes\alpha)\rightarrow F^{p}(G,C_{0}(G,A),\mathrm{lt}\otimes\alpha)$ (see Proposition \ref{injective}) which maps $F\in C_{c}(G\times \hat{G},A)$ into $C_c(G,C_0(G,A))$ by the formula
\begin{align}\label{Phi2}
\Phi_{2}(F)(s,t):=\int_{\hat{G}}F(s,\gamma)\overline{\gamma(t)}d\mu(\gamma), \ \ \ \ \ s,t\in G.
\end{align}
We remark that $\Phi_2$ is essentially induced by the Gelfand transformation $\Gamma_p$ of $F^p_\lambda(\hat{G})$.

{\it The construction of $\Phi_3$}.
Let $\mathrm{id}$ be the trivial action of $G$ on $A$. Then we can form the tensor product action of $G$ on $C_{0}(G,A)$ by $(\mathrm{lt}\otimes \mathrm{id})_{t} f(s):=f(t^{-1}s)$ for all $s, t\in G$ and $f\in C_0(G,A)$.
For $F\in C_{c}(G,C_{0}(G,A))$, let $\Phi_3(F)$ be defined as
\begin{align}\label{Phi3}
\Phi_{3}(F)(s,t):=\alpha^{-1}_{t}(F(s,t)),\ \ \ \ s,t\in G.
\end{align}
It will be also shown that $\Phi_3$ is isometric (see Proposition \ref{change for trival action}). Hence $\Phi_3$ extends to an isometric isomorphism from $F^{p}(G,C_{0}(G,A),\mathrm{lt}\otimes\alpha)$ onto $F^{p}(G,C_{0}(G,A),\mathrm{lt}\otimes\mathrm{id})$.

{\it The construction of $\Phi_4$}.
E. Gardella and H. Thiel {\cite[Theorem 4.3]{Gardella and Thiel}}
have showed that $F^{p}(G,C_{0}(G),\mathrm{lt})$ is isometrically isomorphic to $\overline{M}_{G}^{p}$
(see \cite[Example 1.6]{N. C. Phillips Lp} for the definition of $\overline{M}_{G}^{p}$).
$\overline{M}_{G}^{p}$ equals $\mathcal{K}\left(l^{p}(G)\right)$ when $p>1$, and is strictly smaller than $\mathcal{K}\left(l^{1}(G)\right)$ when $p=1$ and $G$ is infinite.  Since $A$ has unique $L^p$ operator matrix norms, the norm on spatial $L^p$ operator tensor product $\mathcal{K}\left(l^{p}(G)\right)\otimes_{p}A$ is independent of the choice of isometric representation of $A$ (see Lemma \ref{isometric tensor}).
We assume that $A\subseteq \mathcal{B}(E)$ is a closed subalgebra on some separable $L^p$ space $E$.
By the universal property of full $L^p$ operator crossed product $F^{p}(G,C_{0}(G,A),\mathrm{lt}\otimes \mathrm{id})$ (see \cite[Theorem 3.6]{N. C. Phillips Lp}), there exists a homomorphism
$\Phi_{4}:F^{p}(G,C_{0}(G,A),\mathrm{lt}\otimes\mathrm{id})\rightarrow \overline{M}_{G}^{p}\otimes_{p}A\subseteq \mathcal{K}\left(l^{p}(G)\right)\otimes_{p}A$ which maps $F\in C_{c}(G\times G,A)\subseteq F^{p}(G,C_{0}(G,A),\mathrm{lt}\otimes \mathrm{id})$ to $\mathcal{K}\left(l^{p}(G)\right)\otimes_{p}A$
by the formula \begin{align}\label{Phi4}
\Phi_{4}(F)h(t):=\sum_{s\in G}F(s,t)h(s^{-1}t),
\end{align}
where $h\in C_{c}(G,E)\subseteq l^{p}(G,E)$.


We will prove that the map $\Phi=\Phi_4\circ\Phi_3\circ\Phi_2\circ\Phi_1$ is an equivariant homomorphism for the
double dual action $\hat{\hat{\alpha}}$ of $G$ on $F^{p}(\hat{G},F^{p}(G,A,\alpha),\hat{\alpha})$ and the action $\mathrm{Ad}\rho\otimes \alpha$ of $G$ on $\mathcal{K}(l^{p}(G))\otimes_{p}A$.
The Pontryagin Duality Theorem allows us to identify $G$ with the dual of $\hat{G}$ where $s\in G$ is associated to the character $\gamma\mapsto \gamma(s)$ on $\hat{G}$. Therefore there is a
{\it double dual action} $\hat{\hat{\alpha}}$ of $G$ on the iterated crossed product $F^{p}(\hat{G},F^{p}(G,A,\alpha),\hat{\alpha})$. The double dual action $\hat{\hat{\alpha}}$ of $G$ on $C_{c}(\hat{G}\times G,A)\subseteq F^{p}(\hat{G},F^{p}(G,A,\alpha),\hat{\alpha})$ is given by $\hat{\hat{\alpha}}_{t}(F)(\gamma, s):=\overline{\gamma(t)} F(\gamma, s)$ for all $t\in G$. Next let us introduce the action of $G$ on $\mathcal{K}(l^{p}(G))\otimes_{p}A$.
Let $\rho:G\rightarrow \mathcal{B}(l^{p}(G))$ be the right-regular representation and form the dynamical system $\mathrm{Ad}\rho: G\rightarrow \mathrm{Aut}(\mathcal{K}(l^{p}(G)))$, where $(\mathrm{Ad}\rho)_{s}T:=\rho(s)T\rho(s^{-1})$. Then we have the tensor product dynamical system $\mathrm{Ad}\rho\otimes\alpha:G\rightarrow \mathrm{Aut}(\mathcal{K}(l^{p}(G)))\otimes_{p}A$, where $(\mathrm{Ad}\rho\otimes\alpha)_s (T\otimes a)=(\mathrm{Ad}\rho)_s(T)\otimes \alpha_s(a)$.


The main result of this paper is the following theorem.

\begin{thm}\label{T:main}
Let $(G,A,\alpha)$ be an $L^p$ operator algebra dynamical system,
where $G$ is a countable discrete Abelian group, and $A$ is a separable unital $L^p$ operator algebra which has unique $L^p$ operator matrix norms.
Let $\Phi_1$, $\Phi_2$, $\Phi_3$, $\Phi_4$ be defined as in (\ref{Phi1}), (\ref{Phi2}), (\ref{Phi3}),   (\ref{Phi4}) and $\Phi=\Phi_4\circ\Phi_3\circ\Phi_2\circ\Phi_1$.
 Then
\begin{enumerate}
\item[(i)] $\Phi_{1}$ and $\Phi_{3}$  are two isometric isomorphisms for $p\in [1,\infty)$;
\item[(ii)] $\Phi_2$ is an isomorphism if and only if either $G$ is finite or $p=2$; in particular, $F^2(\hat{G},F^2(G,A,\alpha),\hat{\alpha})$ is isometrically isomorphic to $F^{p}(G,C_{0}(G,A),\mathrm{lt}\otimes\alpha)$;
\item[(iii)]  $\Phi_4$ is an isometric homomorphism for $p\in [1,\infty)$, and $\Phi_4$ is an isometric isomorphism if and only if $p\in (1,\infty)$;
\item[(iv)] $\Phi$ is equivariant for the double dual action $\hat{\hat{\alpha}}$ of $G$ on $F^p(\hat{G},F^p(G,A,\alpha),\hat{\alpha})$ and the action $\mathrm{Ad}\rho\otimes\alpha$ of $G$ on $\mathcal{K}(l^p(G))\otimes_p A$.
\end{enumerate}
\end{thm}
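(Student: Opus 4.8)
The plan is to dispatch the four parts in increasing order of difficulty, with part (ii) carrying essentially all of the genuine content. For (i), Propositions \ref{change} and \ref{change for trival action} already supply that $\Phi_1$ and $\Phi_3$ are isometric homomorphisms with dense range; since an isometry out of a Banach algebra has closed range, dense range upgrades to surjectivity, so each of $\Phi_1,\Phi_3$ is an isometric isomorphism for every $p\in[1,\infty)$. For (iii), I would invoke the Gardella--Thiel identification $F^p(G,C_0(G),\mathrm{lt})\cong\overline{M}_G^p$ together with Lemma \ref{isometric tensor} (which makes $\mathcal{K}(l^p(G))\otimes_p A$ independent of the chosen representation of $A$) to see that $\Phi_4$ is isometric onto $\overline{M}_G^p\otimes_p A$; it is therefore an isometric isomorphism onto $\mathcal{K}(l^p(G))\otimes_p A$ exactly when $\overline{M}_G^p=\mathcal{K}(l^p(G))$, i.e. precisely for $p\in(1,\infty)$, since $\overline{M}_G^1\subsetneq\mathcal{K}(l^1(G))$ when $G$ is infinite.

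For (ii) I would first reduce the statement about $\Phi_2$ to one about the Gelfand transform $\Gamma_p\colon F^p_\lambda(\hat G)\to C_0(G)$ (with $C_0(G)=c_0(G)$ as $G$ is discrete). In the coordinates of formula (\ref{Phi2}), $\Phi_2$ acts as $\Gamma_p$ in the $\hat G/G$ variable, tensored with the identity in the $G$-crossed-product variable and in $A$; because $A$ is unital with unique $L^p$ operator matrix norms, the factor $A$ does not interfere, so $\Phi_2$ is an isomorphism if and only if $\Gamma_p$ is. The two easy directions follow at once: when $p=2$, $\Gamma_2$ is the classical Fourier transform, an isometric isomorphism $C^*_r(\hat G)\to c_0(G)$ by Plancherel and commutative Gelfand theory, and functoriality of the full crossed product then yields the isometric isomorphism asserted in the ``in particular'' clause; when $G$ is finite, $F^p_\lambda(\hat G)$ is a finite-dimensional semisimple commutative algebra and $\Gamma_p$ is the bijective finite Fourier transform, hence automatically a topological isomorphism.

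The crux is the converse: for $G$ infinite and $p\neq2$, I must show $\Gamma_p$ is not bounded below. I would record first that the evaluation characters $e_s(\gamma):=\gamma(s)$ are simultaneous eigenvectors of every convolution operator, with $\lambda_p(\varphi)e_s=\hat\varphi(s)\,e_s$, so that $\|\Gamma_p(\varphi)\|_\infty\le\|\varphi\|_{F^p_\lambda}$ and $\Gamma_p$ is always a contractive injection into $c_0(G)$. Were it bounded below, every $\pm1$-valued finitely supported symbol would give a uniformly bounded multiplier on $L^p(\hat G)$; equivalently, the character system $\{e_s\}_{s\in G}$ would be an unconditional basis of $L^p(\hat G)$. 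This is exactly what fails for $p\neq2$: on an infinite compact abelian group the character system is an unconditional basis of $L^p$ only when $p=2$, equivalently the $L^p$-multiplier norm of sign symbols supported on an $n$-point subset grows without bound as $n\to\infty$ (sharpness of the Hausdorff--Young inequality). Transferring through a quotient $\hat G\twoheadrightarrow\mathbb{T}$, or through arbitrarily large finite subgroups, reduces a general infinite $G$ to this classical fact. I expect this non-unconditionality estimate to be the main obstacle, since it is the one step that draws on hard harmonic analysis rather than soft functoriality.

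Finally, for (iv) I would verify equivariance directly on the dense subalgebra $C_c(\hat G\times G,A)$, on which $\Phi$ is the explicit composite of (\ref{Phi1})--(\ref{Phi4}). Starting from $\hat{\hat\alpha}_t(F)(\gamma,s)=\overline{\gamma(t)}F(\gamma,s)$, I would push the factor $\overline{\gamma(t)}$ through $\Phi_1$ (which inserts $\gamma(s)$ and swaps the variables), through the Fourier transform $\Phi_2$ (which converts the character $\overline{\gamma(t)}$ into translation by $t$ on $G$), through $\Phi_3$, and then compare the result with $(\mathrm{Ad}\rho\otimes\alpha)_t$ acting on $\mathcal{K}(l^p(G))\otimes_p A$, where the right-regular representation $\rho$ produces precisely the translation predicted by the Fourier transform. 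Since all four maps are bounded homomorphisms, checking the identity $\Phi\circ\hat{\hat\alpha}_t=(\mathrm{Ad}\rho\otimes\alpha)_t\circ\Phi$ on this dense set suffices; the computation is routine bookkeeping once the role of $\rho$ is correctly matched against the left-translation action, which is the only point requiring care.
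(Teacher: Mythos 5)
Your plan follows the paper's architecture almost exactly: part (i) is delegated to Propositions~\ref{change} and \ref{change for trival action} (the paper's own proof of the theorem does the same), part (ii) is reduced to the Gelfand transform $\Gamma_p\colon F^p_\lambda(\hat G)\to C_0(G)$, and part (iv) is the same dense-subalgebra computation that the paper splits between Lemma~\ref{1-3} and Claim~2 of Proposition~\ref{equivariant for the compacts}. The one genuine difference of route is in (ii): the paper does not re-derive the failure of $\Gamma_p$ to be an isomorphism for $p\neq 2$ and $G$ infinite, but quotes it as Theorem~\ref{Gelfand}, whereas you propose to prove it from the non-unconditionality of the character system in $L^p(\hat G)$; that is essentially the proof of the quoted theorem, so your version is more self-contained but not different in substance. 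Be aware that the remaining reduction is not entirely soft: ``the factor $A$ does not interfere'' is carrying the weight of the paper's fiberwise argument that $\varphi_2=\Gamma_p\otimes\mathrm{id}$ is injective and of the passage from $\varphi_2$ to $\Phi_2=\varphi_2\rtimes\mathrm{id}$ via the conditional expectations $E_s$ of Proposition~\ref{expection} (after identifying full with reduced crossed products by amenability).

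The one place where your proposal has a real gap is (iii). Theorem~\ref{compact} identifies $F^p(G,C_0(G),\mathrm{lt})$ with $\overline{M}_G^p$ \emph{without} coefficients, and Lemma~\ref{isometric tensor} only says that the norm on $\overline{M}_G^p\otimes_p A$ is independent of the chosen representation of $A$; neither tells you that the concrete regular representation of $F^p(G,C_0(G,A),\mathrm{lt}\otimes\mathrm{id})$ on $l^p(G)\otimes_p l^p(G)\otimes_p E$ computes the universal norm, which is what ``$\Phi_4$ is isometric'' requires. The paper supplies this with Lemma~\ref{does not depend} (the integrated form of the regular covariant representation built from \emph{any} nondegenerate $\sigma$-finite isometric representation of the coefficient algebra is isometric on the reduced crossed product --- this is where the unique-$L^p$-matrix-norms hypothesis genuinely enters), together with Lemma~\ref{C0} and the stabilization isometry $V(\delta_s\otimes\delta_t\otimes\xi)=\delta_{st}\otimes\delta_t\otimes\xi$ conjugating that representation into $\overline{M}_G^p\otimes_p\mathbb{C}I\otimes_p A$. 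Without some substitute for this step your two cited ingredients give only a contractive homomorphism with dense range, not an isometric isomorphism onto $\overline{M}_G^p\otimes_p A$.
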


\begin{cor}\label{Corollary}
Under the same assumption in Theorem \ref{T:main}, the map
$\Phi:F^p(\hat{G},F^p(G,A,\alpha),\hat{\alpha})$  $\rightarrow \mathcal{K}(l^p(G))\otimes_p A$ is an isomorphism if and only if either $G$ is finite or $p=2$. In particular, $F^2(\hat{G},F^2(G,A,\alpha),\hat{\alpha})$ is isometrically isomorphic to $\mathcal{K}(l^2(G))\otimes_2 A$.
\end{cor}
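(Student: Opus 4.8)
The plan is to read the Corollary off Theorem \ref{T:main} by a short case analysis, exploiting the factorization $\Phi=\Phi_4\circ\Phi_3\circ\Phi_2\circ\Phi_1$. The first thing I would record is that, by Theorem \ref{T:main}(i), $\Phi_1$ and $\Phi_3$ are isometric isomorphisms for every $p\in[1,\infty)$, hence invertible. Consequently, whenever $\Phi_4$ is also invertible one has $\Phi_2=\Phi_3^{-1}\circ\Phi_4^{-1}\circ\Phi\circ\Phi_1^{-1}$, so that $\Phi$ is an isomorphism if and only if $\Phi_2$ is. Thus the entire statement is governed by the two nontrivial maps $\Phi_2$ and $\Phi_4$, and the proof reduces to bookkeeping their behaviour.

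For the sufficiency, I would treat two cases. If $p=2$, then $\Phi_2$ is an isometric isomorphism by Theorem \ref{T:main}(ii) and $\Phi_4$ is an isometric isomorphism by (iii), since $2\in(1,\infty)$; hence $\Phi$ is a composite of four isometric isomorphisms and is itself an isometric isomorphism, which simultaneously yields the final assertion that $F^2(\hat{G},F^2(G,A,\alpha),\hat{\alpha})$ is isometrically isomorphic to $\mathcal{K}(l^2(G))\otimes_2 A$. If instead $G$ is finite, then $l^p(G)$ is finite dimensional and $\overline{M}_G^p=\mathcal{K}(l^p(G))$ for every $p$ (as recalled in the excerpt), so the construction of $\Phi_4$ realizes it as an isometric homomorphism whose range $\overline{M}_G^p\otimes_p A$ is all of $\mathcal{K}(l^p(G))\otimes_p A$; thus $\Phi_4$ is an isometric isomorphism for all $p\in[1,\infty)$, and combined with $\Phi_2$ being an isomorphism in the finite case (again (ii)) and with (i), $\Phi$ is an isomorphism for every $p$.

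For the necessity I would assume $G$ is infinite and $p\neq2$ and show $\Phi$ is not an isomorphism, splitting into two subcases. If $p\in(1,\infty)\setminus\{2\}$, then $\Phi_4$ is an isometric isomorphism by (iii) while $\Phi_2$ fails to be an isomorphism by (ii); since $\Phi_1,\Phi_3,\Phi_4$ are all isomorphisms, the equivalence recorded in the first paragraph forces $\Phi$ to fail to be an isomorphism as well. If $p=1$, I would instead argue through surjectivity: by (iii) the map $\Phi_4$ is an isometric homomorphism that is not an isomorphism, and an isometric homomorphism is injective, so a bijective one would automatically be an isometric isomorphism; hence $\Phi_4$ is not surjective and its range is a proper subalgebra of $\mathcal{K}(l^1(G))\otimes_1 A$. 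Since the range of $\Phi$ is contained in the range of $\Phi_4$, the map $\Phi$ is not surjective and therefore not an isomorphism.

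I expect the main obstacle to be exactly the $p=1$ converse, where \emph{both} $\Phi_2$ and $\Phi_4$ fail to be isomorphisms, so one cannot simply cancel invertible factors as in the $p\in(1,\infty)\setminus\{2\}$ subcase. The argument there must instead use that an injective (indeed isometric) homomorphism which is not an isomorphism cannot be surjective, together with the containment of the range of $\Phi$ in that of $\Phi_4$; this is what makes the failure of $\Phi_4$ propagate to $\Phi$ regardless of $\Phi_2$. I would also take care to separate the finite and infinite cases for $\Phi_4$: the characterization ``$\Phi_4$ is an isomorphism iff $p\in(1,\infty)$'' in (iii) carries content only for infinite $G$, whereas for finite $G$ the identity $\overline{M}_G^p=\mathcal{K}(l^p(G))$ makes $\Phi_4$ an isomorphism for every $p$, which is precisely why ``$G$ finite'' appears as a genuinely independent sufficient condition in the statement.
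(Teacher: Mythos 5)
Your proposal is correct and is essentially the derivation the paper intends: the corollary is stated without a separate proof because it is meant to follow from Theorem \ref{T:main} by exactly the case analysis you give, cancelling the invertible factors $\Phi_1,\Phi_3$ (and $\Phi_4$ when $p\in(1,\infty)$) and handling $p=1$ via the failure of surjectivity of $\Phi_4$. Your observation that the literal wording of (iii) must be read against Proposition \ref{equivariant for the compacts} (where $\Phi_4$ is an isometric isomorphism onto $\overline{M}_G^p\otimes_p A$, which equals $\mathcal{K}(l^p(G))\otimes_p A$ precisely when $p>1$ or $G$ is finite) is the right way to make the ``$G$ finite'' case airtight.
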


\begin{rmk}
By Theorem \ref{T:main}, Problem \ref{P:main} is reduced to studying when $F^{p}(G,
F^{p}(\hat{G},A,\beta),\hat{\beta}\otimes \alpha)$ is isomorphic to $F^{p}(G,C_{0}(G,A),\mathrm{lt}\otimes\alpha)$.
Our result shows that the natural homomorphism $\Phi_2$ between them, induced by the Gelfand transformation of $F^p_\lambda(\hat{G})$,
is not an isomorphism. This suggests a negative answer to N. C. Phillips' problem on the $L^p$ Takai duality in the case that $p\in [1,\infty)\setminus\{2\}$ and $G$ is infinite.
\end{rmk}

\begin{rmk}
Usually the norm on spatial $L^{p}$ operator tensor product depends on how an $L^p$ operator algebra is represented (see \cite[page 8]{N. C. Phillips Lp}). This differs significantly from the $C^*$-algebra case.
To ensure the ``uniqueness" of the norm on the spatial $L^p$ operator tensor product, we only consider
$L^p$ operator algebras with unique $L^p$ operator matrix norms. Furthermore, the integrated form of a nondegenerate $\sigma$-finite isometric regular covariant representation of $(G,A,\alpha)$ is an isometric representation of $F^p_\lambda(G,A,\alpha)$ (see Lemma \ref{does not depend}).
We do not know whether the result of Theorem \ref{T:main} still holds without the assumption that $A$ has unique $L^p$ operator matrix norms.

\end{rmk}


The paper is organized as follows.
In Section 2, we make some preparation for the proof of Theorem \ref{T:main}.
Subsection 2.1 is devoted to the description of nondegenerate contractive representation of $F^{p}(G,A,\alpha)$, showing that each such representation is the integrated form for some nondegenerate contractive covariant representation of $(G,A,\alpha)$ (see Theorem \ref{correspondence}), which will be useful in the proof of Theorem \ref{T:main} (i). In Subsection 2.2, we prove two properties of $L^p$ operator algebras with unique $L^p$ operator matrix norms (see Lemmas \ref{isometric tensor} and \ref{does not depend}). Section 3 is devoted to the proof of Theorem \ref{T:main}.

\section{Preparation}

In this section we make some preparation for the proof of Theorem \ref{T:main}.
\subsection{Contractive representations of $F^{p}(G,A,\alpha)$}
$\\$
$\\$
This subsection is devoted to the description of nondegenerate contractive representation of $F^{p}(G,A,\alpha)$.
The following theorem is the main result, which will be useful to prove Theorem \ref{T:main} (i).


\begin{thm}\label{correspondence}
Let $(G,A,\alpha)$ be an $L^p$ operator algebra dynamical system, where $G$ is a locally compact group, and $A$ is an $L^{p}$ operator algebra with a two-sided contractive approximate identity.
Then every nondegenerate $\sigma$-finite contractive representation of $F^{p}(G,A,\alpha)$ has the
form $\pi\rtimes v$ for some nondegenerate $\sigma$-finite contractive covariant representation $(\pi,v)$ of $(G,A,\alpha)$.
\end{thm}

\begin{proof} We let $L:F^{p}(G,A,\alpha)\rightarrow \mathcal{B}(E)$ be a nondegenerate $\sigma$-finite contractive representation of $F^{p}(G,A,\alpha)$ on an $L^{p}$ space $E$. Next we shall construct a nondegenerate $\sigma$-finite contractive covariant representation of $(G,A,\alpha)$ on $E$ admitting $L$ as its integrated form. The full $L^{p}$ operator crossed product $F^{p}(G,A,\alpha)$ in general does not contain a copy of either $A$ or $G$. However, the left multiplier algebra of $F^{p}(G,A,\alpha)$ does.
Our main approach is to construct maps from $A$ and $G$ into the left multiplier algebra of $F^{p}(G,A,\alpha)$.
Let $M_{L}(F^{p}(G,A,\alpha))$ be the left multiplier algebra of $F^{p}(G,A,\alpha)$, and we write $\iota:F^{p}(G,A,\alpha)\rightarrow M_{L}(F^{p}(G,A,\alpha))$ for the canonical inclusion. Then, by \cite[Theorem 4.5]{Gardella and Thiel convolution}, there exists a canonical extension $\overline{L}:  M_{L}(F^{p}(G,A,\alpha))\rightarrow \mathcal{B}(E)$ such that $L=\overline{L}\circ \iota.$

{\bf Step 1.} The construction of an injective homomorphism $i_{A}:A\rightarrow M_{L}(F^{p}(G,A,\alpha))$.

For each $f\in C_{c}(G,A,\alpha)$, we define $i_{A}(a)f(s):=af(s)$.
Clearly, if $(\pi_1,u)$ is a contractive covariant representation of $(G,A,\alpha)$ on an $L^{p}$ space $E_{0}$, then we have
\begin{align}\label{Eq:a}
\pi_1 \rtimes u(i_{A}(a)f)=\pi_1(a)\circ \pi_1\rtimes u (f).
\end{align}
It follows that $$||i_{A}(a)f||\leq ||a||||f||.$$ Therefore $i_{A}(a)$ extends to a map from $F^{p}(G,A,\alpha)$ to itself.
For $f,g\in C_{c}(G,A,\alpha)$, note that $$\begin{aligned} (i_{A}(a)f)*g(t) &=
 \int_{G} i_{A}(a)f(s)\alpha_{s}(g(s^{-1}t))d\mu(s)
  \\ &=a \int_{G}f(s)\alpha_{s}(g(s^{-1}t))d\mu(s)
  \\ &=i_{A}(a)(f*g)(t).\end{aligned}$$
Since $i_{A}(a)$ is bounded and $C_c(G,A,\alpha)$ is dense in $F^p(G,A,\alpha)$,
it follows that $i_{A}(a)\in M_{L}(F^{p}(G,A,\alpha))$.
Obviously, $i_{A}$ is a homomorphism from $A$ to $M_{L}(F^{p}(G,A,\alpha))$.

To see that $i_A$ is injective, let $\rho_0: A\rightarrow \mathcal{B}(E_1)$ be an faithful nondegenerate  representation of $A$ on an $L^p$ space $E_1$, and let $(\rho, \lambda_p^{E_1})$ be its associated
regular covariant representation on $L^p(G,E_1)$. Then $\rho: A\rightarrow \mathcal{B}(L^p(G,E_1))$ is faithful and nondegenerate (see \cite[Lemma 2.11(7)]{N. C. Phillips Lp}. Let $\overline{\rho\rtimes \lambda_p^{E_1}}:M_L(F^p(G,A,\alpha))\rightarrow \mathcal{B}(L^p(G,E_1))$ be the canonical extension of $\rho\rtimes \lambda_p^{E_1}$. Since $A$ has a two-sided contractive approximate identity, by \cite[Theorem 5.5]{Gardella and Thiel convolution},
there is a two-sided contractive approximate identity in $F^{p}(G,A,\alpha)$. Since $(\rho, \lambda_p^{E_1})$ is nondegenerate, by (\ref{Eq:a}), we have
 $\rho(a)=\overline{\rho\rtimes \lambda_p^{E_1}}\circ i_A(a)$. It follows that $i_A$ is injective.

{\bf Step 2.} The construction of an injective homomorphism from $G$ into the group of invertible isometries in $M_{L}(F^{p}(A,G,\alpha))$.

For each $s\in G$, we define $i_{G}(s):C_{c}(G,A,\alpha)\rightarrow C_{c}(G,A,\alpha)$ as $i_{G}(s)f(t):=\alpha_{s}(f(s^{-1}t))$ for   $f\in C_{c}(G,A,\alpha)$ and $t\in G$.  Note that if $(\pi_2,w)$ is a contractive covariant representation of $(G,A,\alpha)$ on an $L^{p}$-space $E_{2}$, then for $\xi\in E_{2}$ we have
$$\begin{aligned}\label{eq2} \pi_2\rtimes w(i_{G}(s)f))(\xi) &=\int_{G} \pi_2(i_{G}(s)f(t))w_{t}(\xi)d\mu(t) \\ &=\int_{G} \pi_2(\alpha_{s}(f(s^{-1}t)))w_{t}(\xi)d\mu(t)\\ &=\int_{G} \pi_2(\alpha_{s}(f(t)))w_{st}(\xi)d\mu(t) \\
&=\int_{G}w_{s}\pi_2(f(t))w_{s^{-1}}w_{st}(\xi)d\mu(t)\\
&=w_{s}\circ \pi_2\rtimes w(f)(\xi).\end{aligned}$$
Hence we have \begin{align}\label{Eq:norm}
\pi_2\rtimes w(i_{G}(s)f))=w_{s}\circ \pi_2\rtimes w(f).
\end{align}
Since $w_{s}$ is an isometry, by the universal property of $F^{p}(G,A,\alpha)$ (see \cite[Theorem 3.6]{N. C. Phillips Lp}), we have
$||i_{G}(s)f||=||f||.$
Thus we can extend $i_{G}(s)$ to an isometric map from $F^{p}(G,A,\alpha)$ to itself.

For $f,g\in C_{c}(G,A,\alpha)$ and $r,t\in G$, we have $$\begin{aligned} (i_{G}(r)f)*g(t) &=
 \int_{G} i_{G}(r)f(s)\alpha_{s}(g(s^{-1}t))d\mu(s)
  \\ &=\int_{G}\alpha_{r}(f(r^{-1}s))\alpha_{s}(g(s^{-1}t))d\mu(s)
  \\ &=\int_{G} \alpha_{r}(f(s))\alpha_{rs}(g(s^{-1}r^{-1}t)) d\mu(s)
  \\ &=\alpha_{r}(\int_{G}f(s)\alpha_{s}(g(s^{-1}r^{-1}t))d\mu(s)
  \\ &=\alpha_{r}(f*g(r^{-1}t))
  \\ &=i_{G}(r)(f*g)(t).
  \end{aligned}$$
Since $i_{G}(r)$ is isometric on $F^p(G,A,\alpha)$ and $C_c(G,A,\alpha)$ is dense in $F^p(G,A,\alpha)$, it follows that $i_{G}(r)\in M_{L}(F^{p}(A,G,\alpha))$. It is easy to check $i_{G}(rs)=i_{G}(r)i_{G}(s)$ and $i_G(r)^{-1}=i_G(r^{-1})$. Then $i_G$ is an invertible isometry-valued homomorphism.

To show that $i_{G}$ is injective, let $\pi_{0}:A\rightarrow \mathcal{B}(E_{0})$ be a faithful nondegenerate  representation of $A$ on an $L^{p}$ space $E_0$, and let $(\pi_3,\lambda_{p}^{E_{0}})$ be its associated regular covariant representation on $L^{p}(G,E_{0})$. Let $\overline{\pi_3\rtimes\lambda_{p}^{E_{0}}}:M_{L}(F^{p}(G,A,\alpha))\rightarrow \mathcal{B}(L^{p}(G,E_{0}))$ be the  canonical extension of $\pi_3\rtimes\lambda_{p}^{E_{0}}$.
 By (\ref{Eq:norm}), for each $f\in C_{c}(G,A,\alpha)$,
we have
$\pi_3\rtimes \lambda_{p}^{E_{0}}(i_{G}(s)f))=\lambda_{p}^{E_{0}}(s)\circ \pi_3\rtimes \lambda_{p}^{E_{0}}(f).$
Since there is a two-sided contractive
approximate identity in $F^p(G,A,\alpha)$ (see \cite[Theorem 5.5]{Gardella and Thiel convolution}) and $(\pi_3, \lambda_{p}^{E_{0}})$ is nondegenerate,
we have $\lambda_{p}^{E_{0}}(s)=\overline{\pi_3\rtimes\lambda_{p}^{E_{0}}}\circ i_{G}(s)$.
So, if $s\neq e$, then we certainly have $\lambda_{p}^{E_{0}}(s)\neq I_{L^{p}(G,E_{0})}$, where $I_{L^{p}(G,E_{0})}$ is the identity operator on $L^{p}(G,E_{0})$. This shows that $i_{G}(s)\neq I$ if $s\neq e$, where $I$ is the identity element in $M_L(F^p(G,A,\alpha))$.

It is clear that $\overline{L}\circ i_{G}$ is a representation of $G$ and $\overline{L}\circ i_{A}$ is a representation of $A$. Let $(\pi,v)=(\overline{L}\circ i_{A},\overline{L}\circ i_{G})$.
Now it suffices to check that $(\pi,v)$ is the nondegenerate $\sigma$-finite contractive covariant representation of $(G,A,\alpha)$ and $L=\pi\rtimes  v$.

{\bf Step 3.} Verification.


For $f\in C_{c}(G,A,\alpha)$, $s,t\in G$ and $a\in A$, note that
$$\begin{aligned}  i_{G}(s)i_{A}(a)i_{G}(s^{-1})f(t) &= \alpha_{s}(i_{A}(a)i_{G}(s^{-1})f(s^{-1}t))=
\alpha_{s}(ai_{G}(s^{-1})f(s^{-1}t))\\ &=
\alpha_{s}(a) \alpha_{s}(\alpha_{s^{-1}}(f(ss^{-1}t)))=
i_{A}(\alpha_{s}(a))f(t) \end{aligned}$$
and
$$\begin{aligned} (\overline{L}\circ i_{G})(s) (\overline{L}\circ i_{A})(a) (\overline{L}\circ i_{G})(s^{-1})  =
\overline{L}\circ i_{G}(s)i_{A}(a)i_{G}(s^{-1})  =
\overline{L}\circ i_{A}(\alpha_{s}(a)).\end{aligned}$$
Since $L$ is $\sigma$-finite and contractive, it is easy to check that $\overline{L}\circ i_{A}$ is also $\sigma$-finite and contractive.
Now it remains to show that $\overline{L}\circ i_{A}$ is nondegenerate. If $\{e_{i}\}$ is a left contractive approximate identity of $A$, then $\{i_{A}(e_{i})\}$ is a left contractive approximate identity in $M_{L}(F^{p}(G,A,\alpha))$.
 For $f\in C_{c}(G,A,\alpha)$ and $\xi\in E$, we have
 $$\begin{aligned}  \lim_{i}(\overline{L}\circ i_{A}(e_{i}))L( f)\xi&=
\lim_{i}(\overline{L}(e_{i} f)\xi  =
\lim_{i} L(e_{i} f)\xi =
L(f)\xi.
\end{aligned}$$
Since $L$ is nondegenerate, it follows that $\{\overline{L}\circ i_{A}(e_{i})\}$ converges to the identity operator $I_{E}$ in $\mathcal{B}(E)$.
Hence $\pi=\overline{L}\circ i_{A}$ is nondegenerate.
Thus $(\pi,v)$ is a nondegenerate covariant representation of $(G,A,\alpha)$ on the $L^{p}$ space $E$.
\par
For $a\in A$ and $g\in C_{c}(G)$, note that
$$\begin{aligned}  (\overline{L}\circ i_{A})\rtimes (\overline{L}\circ i_{G})(i_{A}(a)i_{G}(g))&=
\int_{G}\overline{L}\circ i_{A}(i_{A}(a)g(s))\overline{L}\circ i_{G}(s)d\mu(s) \\ &=
\int_{G}\overline{L}\circ i_{A}(ag(s))\overline{L}\circ i_{G}(s)d\mu(s)\\ &=
\overline{L}\circ i_{A}(a)\int_{G}g(s)\overline{L}\circ i_{G}(s)d\mu(s) \\&=
\overline{L}\circ i_{A}(a)\overline{L}\circ i_{G}(g)=
\overline{L}(i_{A}(a)i_{G}(g))\\&=
L(i_{A}(a)i_{G}(g)).
\end{aligned}$$
By \cite[Lemma 1.87]{Williams}, the span of elements of the form $i_{A}(a)i_{G}(g)$ are dense in $F^{p}(G,A,\alpha)$ (in fact, $i_{A}(a)i_{G}(g)(s)=g(s)a$). It follows that $L=(\overline{L}\circ i_{A})\rtimes (\overline{L}\circ i_{G})=\pi\rtimes v$.
\end{proof}

\subsection{Unique $L^p$ operator matrix norms}
$\\$
$\\$
In this subsection, we shall prove two properties of $L^p$ operator algebras with unique $L^p$ operator matrix norms.

\begin{defn}[{\cite[Definition 3.3]{Lp AF}}]\label{P-matrix-norm}
Let $(X,\mathcal{B},\mu)$ be a measure space, and let $A\subseteq \mathcal{B}(L^p(X,\mu))$ be a closed subalgebra. We equip $M_n(A)$ with the matrix norms coming from the identification of $M_n(A)$ with a closed subalgebra of $\mathcal{B}(L^p(\{1,2,...,n\}\times X,\nu\times \mu))$, where $\nu$ is the counting measure on $\{1,2,...,n\}$.
\end{defn}


\begin{defn}[{\cite[Definition 4.1]{Lp AF}}]\label{matrix norm}
Let $p\in [1,\infty)$ and $A$ be a separable $L^p$ operator algebra. We say that $A$ has
{\it unique $L^p$ operator matrix norms} if whenever $(X,\mathcal{B},\mu)$ and $(Y,\mathcal{C},\nu)$ are $\sigma$-finite measure spaces such that $L^p(X,\mu)$ and $L^p(Y,\nu)$ are separable, $\pi:A\rightarrow \mathcal{B}(L^p(X,\mu))$ and $\sigma:A\rightarrow \mathcal{B}(L^p(Y,\nu))$ are isometric representations, and $\pi(A)$ and $\sigma(A)$ are given the matrix normed structure of Definition \ref{P-matrix-norm}, then $\sigma\circ\pi^{-1}:\pi(A)\rightarrow \sigma(A)$ is completely isometric.
\end{defn}

$M_n^p$ and $C(X)$ are basic examples with unique $L^p$ operator matrix norms (see \cite[Corollary 4.4 and Proposition 4.6]{Lp AF}), where $X$ is compact metrizable space.

Given a countable set $X$ endowed with the counting measure, we denote by $l^p(X)$ the corresponding $L^p$ space. When $X=\mathbb{N}$, we simply write $l^p$ for $l^p(\mathbb{N})$.

\begin{lem}\label{isometric tensor}
Let $B\subseteq \mathcal{B}(l^p)$ be a closed subalgebra, and let $A$ be a separable $L^p$ operator algebra with unique $L^p$ operator matrix norms. Then the norm on spatial $L^p$ operator tensor product $B\otimes_p A$ is independent of the choice of isometric representation of $A$.
\end{lem}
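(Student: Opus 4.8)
The plan is to fix two isometric representations of $A$ and show that the identity on the algebraic tensor product extends to an isometric isomorphism of the two resulting spatial tensor products; since $B\otimes_{p}A$ is by definition the closure of the algebraic tensor product $B\odot A$ inside $\mathcal{B}(l^{p}\otimes_{p}L^{p})$, it suffices to prove that the two norms agree on $B\odot A$. So I would take isometric representations $\pi:A\rightarrow\mathcal{B}(L^{p}(X,\mu))$ and $\sigma:A\rightarrow\mathcal{B}(L^{p}(Y,\nu))$ on separable $L^{p}$ spaces and write $B\otimes_{p}^{\pi}A\subseteq\mathcal{B}(l^{p}\otimes_{p}L^{p}(X,\mu))$ and $B\otimes_{p}^{\sigma}A\subseteq\mathcal{B}(l^{p}\otimes_{p}L^{p}(Y,\nu))$ for the corresponding closures. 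For a fixed $T=\sum_{k}b_{k}\otimes a_{k}\in B\odot A$ the goal is then to compare $\|\sum_{k}b_{k}\otimes\pi(a_{k})\|$ with $\|\sum_{k}b_{k}\otimes\sigma(a_{k})\|$.

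The key device is the sequence of coordinate projections $P_{n}$ on $l^{p}=l^{p}(\mathbb{N})$ onto the span of the first $n$ basis vectors. Each $P_{n}$ is a norm-one idempotent, and since $p<\infty$ we have $P_{n}\rightarrow I$ strongly; consequently $P_{n}\otimes I\rightarrow I$ strongly on $l^{p}\otimes_{p}L^{p}(X,\mu)$ while $\|P_{n}\otimes I\|=1$. For any operator $S$ on this space the compressions $S_{n}:=(P_{n}\otimes I)S(P_{n}\otimes I)$ satisfy $\|S_{n}\|\leq\|S\|$ and $S_{n}\rightarrow S$ strongly, so a standard lower-semicontinuity argument (from $\|S\xi\|=\lim_{n}\|S_{n}\xi\|\leq\liminf_{n}\|S_{n}\|\,\|\xi\|$ one gets $\|S\|\leq\liminf_{n}\|S_{n}\|$) forces $\|S_{n}\|\rightarrow\|S\|$. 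Applying this to $S=\sum_{k}b_{k}\otimes\pi(a_{k})$ shows that the tensor-product norm equals the limit of the compression norms.

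Next I would identify each compression with a finite matrix. Since $P_{n}$ compresses $l^{p}$ onto $l^{p}(\{1,\dots,n\})$, the operator $(P_{n}\otimes I)\big(\sum_{k}b_{k}\otimes\pi(a_{k})\big)(P_{n}\otimes I)$, viewed on $l^{p}(\{1,\dots,n\})\otimes_{p}L^{p}(X,\mu)=L^{p}(\{1,\dots,n\}\times X)$, is exactly the matrix $\big(\sum_{k}(b_{k})_{ij}\,\pi(a_{k})\big)_{1\leq i,j\leq n}\in M_{n}(\pi(A))$, where $(b_{k})_{ij}$ denote the matrix entries of $b_{k}$, and its operator norm is precisely the norm of Definition \ref{P-matrix-norm}. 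Because $A$ has unique $L^{p}$ operator matrix norms, the map $\sigma\circ\pi^{-1}:\pi(A)\rightarrow\sigma(A)$ is completely isometric (Definition \ref{matrix norm}), so its $n$-th amplification $M_{n}(\pi(A))\rightarrow M_{n}(\sigma(A))$ is isometric; this amplification sends the above matrix to $\big(\sum_{k}(b_{k})_{ij}\,\sigma(a_{k})\big)_{ij}$, which is exactly the $n$-th compression of $\sum_{k}b_{k}\otimes\sigma(a_{k})$. Hence the two compression norms coincide for every $n$, and letting $n\rightarrow\infty$ yields $\|\sum_{k}b_{k}\otimes\pi(a_{k})\|=\|\sum_{k}b_{k}\otimes\sigma(a_{k})\|$, so that $b\otimes\pi(a)\mapsto b\otimes\sigma(a)$ extends to an isometric isomorphism $B\otimes_{p}^{\pi}A\rightarrow B\otimes_{p}^{\sigma}A$.

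I expect the main obstacle to be the passage from the infinite-dimensional tensor-product norm to finite matrix norms, since the hypothesis of unique $L^{p}$ operator matrix norms controls only the finite levels $M_{n}$. The role of the assumption $B\subseteq\mathcal{B}(l^{p})$ is precisely to supply the coordinate projections $P_{n}$: their compressions land in the finitely-amplified algebras $M_{n}(\pi(A))$ carrying the Definition \ref{P-matrix-norm} structure, and at the same time $P_{n}\otimes I\rightarrow I$ strongly, which is what lets the finite-level complete isometry propagate to the full spatial tensor norm. The two points that would require the most care in a full write-up are the verification that the compression norm equals the Definition \ref{P-matrix-norm} matrix norm (i.e.\ that the compression is genuinely supported on $l^{p}(\{1,\dots,n\})\otimes_{p}L^{p}$) and the strong-convergence argument giving $\|S_{n}\|\rightarrow\|S\|$.
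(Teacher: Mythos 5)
Your proposal is correct and follows essentially the same route as the paper's proof: both use the coordinate projections $P_{n}$ on $l^{p}$, the identity $\|S\|=\sup_{n}\|(P_{n}\otimes I)S(P_{n}\otimes I)\|$, the identification of the compressions with elements of $M_{n}(\pi(A))$ carrying the matrix norms of Definition \ref{P-matrix-norm}, and the complete isometry of $\sigma\circ\pi^{-1}$ to equate the finite levels. Your write-up merely spells out in more detail the lower-semicontinuity argument that the paper leaves as ``not hard to show.''
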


\begin{proof}
For each $n\geq 0$, we let $P_n$ be the canonical projection on $l^p$ defined by $P_n(\sum_{i=0}^{\infty}\alpha_i e_i)=\sum_{i=0}^{i=n}\alpha_i e_i$, where $\{e_i\}$ is the standard basis in $l^p$ and $\alpha_i\in \mathbb{C}$.
Then we have $\|P_nx-x\|\rightarrow 0$ for all $x\in l^p$. It is not hard to show that for every $T\in \mathcal{B}(l^p\otimes_p E)$, where $E$ is a separable $L^p$ space, we have
$$\|T\|=\sup_n\{\|(P_n\otimes I_E)T(P_n\otimes I_E)\|\},$$
where $I_E$ is the identity operator on $E$.
Since $A$ is separable, by \cite[Proposition 1.25]{N. C. Phillips Lp} and \cite[Lemma 2.7]{Lp AF}, we only need separable $L^p$ spaces and the measure is $\sigma$-finite.
Let $\pi_1:A\rightarrow \mathcal{B}(L^p(X,\mu))$ and  $\pi_2:A\rightarrow \mathcal{B}(L^p(Y,\nu))$ be two isometric representations of $A$ on two separable $L^p$ spaces and the measures $\mu,~\nu$ are $\sigma$-finite, respectively.
Thus, if $\sum b_i\otimes a_i\in B\otimes_{alg} A$ $(\otimes_{alg}$ is the algebraic tensor product), then
$$\|\sum b_i\otimes \pi_1(a_i)\|=\sup_{n}\{\|\sum(P_n b_i P_n)\otimes \pi_1(a_i)\|\}$$
and $$\|\sum b_i\otimes \pi_2(a_i)\|=\sup_{n}\{\|\sum(P_n b_i P_n)\otimes \pi_2(a_i)\|\}.$$
Since $P_n\mathcal{B}(l^p)P_n$ is isomorphic to $M_n^p$, and $A$ has unique $L^p$ operator matrix norms, we have $\|\sum(P_n b_i P_n)\otimes \pi_1(a_i)\|=\|\sum(P_n b_i P_n)\otimes \pi_2(a_i)\|$ for each $n$.
Therefore $\|\sum b_i\otimes \pi_1(a_i)\|=\|\sum b_i\otimes \pi_2(a_i)\|$.
\end{proof}

The preceding lemma shows that if $A$ is a separable $L^p$ operator algebra with unique $L^p$ operator matrix norms, and $G$ is a countable discrete group, then the norm on spatial $L^{p}$ operator tensor product $\mathcal{K}(l^{p}(G))\otimes_{p}A$ is independent of the choice of isometric representation of $A$.

%

If $A$ is a $C^*$-algebra, then it is well known that the reduced crossed product $A\rtimes_{\alpha,r}G$ does not depend on the choice of faithful representation of $A\subseteq \mathcal{B}(H)$, since there is a unique $C^*$-norm on $M_n(A)$ (see \cite[Proposition 8.4]{Brown and Ozawa}).
For $L^p$ operator algebras with unique $L^p$ operator matrix norms, we have an analogous result.

\begin{lem} \label{does not depend}
Let $(G,A,\alpha)$ be an $L^p$ operator algebra dynamical system, where $G$ is a countable discrete group and
$A$ is a separable $L^p$ operator algebra with unique $L^p$ operator matrix norms.
Let $\pi_0:A\rightarrow \mathcal{B}(E_0)$ be a nondegenerate $\sigma$-finite isometric representation of $A$ on an $L^p$ space $E_0$, and let $(\pi,\lambda_{p}^{E_0})$ be its associated regular covariant representation. Then the integrated form $\pi\rtimes\lambda_{p}^{E_0}$ is an isometric representation of $F^p_\lambda(G,A,\alpha)$.
\end{lem}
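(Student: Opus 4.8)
The plan is to prove, for every $f\in C_{c}(G,A,\alpha)$, the equality $\|(\pi\rtimes\lambda_{p}^{E_{0}})(f)\|=\|f\|_{F^{p}_{\lambda}(G,A,\alpha)}$. The inequality ``$\le$'' is automatic: since $\pi_{0}$ is isometric it is in particular nondegenerate, $\sigma$-finite and contractive, so $(\pi,\lambda_{p}^{E_{0}})\in\mathrm{RegRep}_{p}(G,A,\alpha)$ and the defining supremum of the reduced norm dominates $\|(\pi\rtimes\lambda_{p}^{E_{0}})(f)\|$. All the content lies in the reverse inequality, and the crux is that the norm produced by an \emph{isometric} seed does not depend on the seed.

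First I would realize the integrated form as a matrix of operators. As $G$ is discrete and $f$ is finitely supported, a direct computation gives, for $\xi\in l^{p}(G,E_{0})$ and $t\in G$,
$$\big[(\pi\rtimes\lambda_{p}^{E_{0}})(f)\xi\big](t)=\sum_{r\in G}\pi_{0}\big(\alpha_{t^{-1}}(f(tr^{-1}))\big)\xi(r),$$
so that $(\pi\rtimes\lambda_{p}^{E_{0}})(f)$ is the operator on $l^{p}(G)\otimes_{p}E_{0}$ with symbol matrix $[\pi_{0}(b(t,r))]_{t,r\in G}$, where $b(t,r):=\alpha_{t^{-1}}(f(tr^{-1}))\in A$ and only finitely many entries in each row and column are nonzero. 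Identifying $l^{p}(G)\cong l^{p}$ and applying the truncation formula from the proof of Lemma \ref{isometric tensor}, its norm equals $\sup_{F}\|[\pi_{0}(b(t,r))]_{t,r\in F}\|$, the supremum being over finite subsets $F\subseteq G$. For each such $F$ the truncated operator acts on $l^{p}(F)\otimes_{p}E_{0}$ and is exactly the image, under $\pi_{0}$ applied entrywise, of the matrix $[b(t,r)]_{t,r\in F}\in M_{|F|}(A)$; hence its norm is the matrix norm of Definition \ref{P-matrix-norm} computed through $\pi_{0}$. By Definition \ref{matrix norm} this value is the same for every isometric representation of $A$ (after the reduction to separable $\sigma$-finite $L^{p}$ spaces used in Lemma \ref{isometric tensor}, available because $A$ is separable). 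Consequently $\|(\pi\rtimes\lambda_{p}^{E_{0}})(f)\|$ is independent of the choice of isometric seed.

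It remains to show that no contractive seed yields a strictly larger norm, so that this common isometric value is the reduced norm. Given a nondegenerate $\sigma$-finite contractive seed $\sigma_{0}:A\rightarrow\mathcal{B}(E_{1})$, I would fix a nondegenerate isometric representation $\iota_{0}:A\rightarrow\mathcal{B}(F)$ of $A$ on a separable $L^{p}$ space and form $\tilde{\sigma}_{0}:=\sigma_{0}\oplus_{p}\iota_{0}$ on $E_{1}\oplus_{p}F$. Then $\tilde{\sigma}_{0}$ is again a nondegenerate $\sigma$-finite seed, and it is isometric because its second coordinate already is. Since the passage to the regular covariant representation and to the integrated form respects $l^{p}$-direct sums, the integrated form of $\tilde{\sigma}_{0}$ decomposes as $(\sigma\rtimes\lambda_{p}^{E_{1}})(f)\oplus_{p}(\iota\rtimes\lambda_{p}^{F})(f)$, and an $l^{p}$-direct sum dominates each summand in norm; thus $\|(\sigma\rtimes\lambda_{p}^{E_{1}})(f)\|\le\|(\tilde{\sigma}\rtimes\lambda_{p}^{E_{1}\oplus_{p}F})(f)\|$, which equals the common isometric value by the previous paragraph. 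Taking the supremum over all contractive seeds yields $\|f\|_{F^{p}_{\lambda}(G,A,\alpha)}\le\|(\pi\rtimes\lambda_{p}^{E_{0}})(f)\|$, as desired.

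The main obstacle is precisely that $\mathrm{RegRep}_{p}(G,A,\alpha)$ is built from merely contractive seeds, whereas uniqueness of $L^{p}$ operator matrix norms controls only isometric representations, and a contractive homomorphism between $L^{p}$ operator algebras need not be completely contractive. The direct-sum trick $\sigma_{0}\oplus_{p}\iota_{0}$ is what bridges this gap, replacing the supremum over contractive seeds by the seed-independent isometric value without appealing to any complete-contractivity property; the remaining points, namely the truncation formula and the reduction to separable $\sigma$-finite $L^{p}$ spaces, are routine and already available from Lemma \ref{isometric tensor} and its proof.
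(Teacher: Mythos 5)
Your proof is correct, and its core coincides with the paper's: both arguments compress $(\pi\rtimes\lambda_p^{E_0})(f)$ by the finite-rank projections $P\otimes I$, identify the compression with a finite matrix over $\pi_0(A)$ (your $[\pi_0(b(t,r))]_{t,r\in F}$ is exactly the paper's $\sum_{s}\sum_{r\in F\cap sF}e_{r,s^{-1}r}\otimes\pi_0(\alpha_{r^{-1}}(a_s))$), and invoke uniqueness of $L^p$ operator matrix norms plus the truncation formula of Lemma \ref{isometric tensor} to conclude that the norm is seed-independent among isometric seeds. Where you genuinely diverge is in the remaining step, namely why this common isometric value is the full reduced norm rather than possibly being exceeded by some merely contractive seed. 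The paper disposes of this by citing Phillips' Lemma 3.19, which supplies one particular nondegenerate $\sigma$-finite isometric seed whose integrated form is already isometric on $F^p_\lambda(G,A,\alpha)$; combined with seed-independence this immediately identifies the reduced norm with the common value. You instead give a self-contained absorption argument: given a contractive seed $\sigma_0$, the $l^p$-direct sum $\sigma_0\oplus_p\iota_0$ with an isometric $\iota_0$ is again a nondegenerate $\sigma$-finite seed, is isometric, and its integrated form dominates that of $\sigma_0$ because the regular representation construction respects $l^p$-direct sums. This correctly shows the supremum over contractive seeds is attained on isometric ones without any external citation; what it costs is a page of routine verification that the paper buys with one reference, and what it buys is independence from Phillips' unpublished lemma and an explicit explanation of why the contractive-versus-isometric gap (which uniqueness of matrix norms alone does not bridge) is harmless.
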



\begin{proof}
We give the proof by proving two claims.

{\bf Claim 1.} For any $f\in C_c(G,A,\alpha)$, it holds that $$\|f\|_{F^{p}_{\lambda}(G,A,\alpha)}=\sup\{\|(\pi\rtimes v(f)\|:(\pi,v)\in  \mathrm{IsoRegRep}_p(G,A,\alpha)\},$$ where $\mathrm{IsoRegRep}_p(G,A,\alpha)$ denotes the class consisting of nondegenerate $\sigma$-finite isometric regular covariant representation of $(G,A,\alpha)$.

By \cite[Lemma 3.19]{N. C. Phillips Lp}, there is a nondegenerate $\sigma$-finite isometric representation $\rho_0:A\rightarrow \mathcal{B}(L^p(X,\mu))$ such that, with $(\rho,\lambda_p^{L^p(X,\mu)})$ being its associated regular covariant representation,  the representation $\rho\rtimes \lambda_p^{L^p(X,\mu)}$ is nondegenerate and isometric on $F^p_\lambda(G,A,\alpha)$. Then one can see Claim 1.

{\bf Claim 2.} The norm of $\pi \rtimes\lambda_{p}^{E_0}(f)$ is independent of the choice of nondegenerate $\sigma$-finite isometric representation $\pi_0$.

Since $A$ is separable, by \cite[Proposition 1.25]{N. C. Phillips Lp} and \cite[Lemma 2.7]{Lp AF}, we may assume that $E_0$ is a separable $L^p$ space and the measure from $E_0$ is $\sigma$-finite.
Arbitrarily choose a nonempty finite set $F\subseteq G$. Let $P\in \mathcal{B}(l^p(G))$ be the finite rank projection onto the span of $\{\delta_{t}:t\in F\}$, where $\{\delta_{t}\}_{t\in G}$ is the canonical basis of $l^p(G)$.
 Let $I$ be the identity operator in $\mathcal{B}(E_0)$. Rather than compute the norm of $f\in C_c(G,A,\alpha)$ under the representation $\pi\rtimes \lambda_p^{E_0}:C_c(G,A,\alpha)\rightarrow \mathcal{B}(l^p(G)\otimes_p E_0)$, we will cut by the projection $P\otimes I$ and show that the norm of $(P\otimes I)\pi\rtimes \lambda_p^{E_0} (f)(P\otimes I)$ is independent of the isometric representation $\pi_0:A\rightarrow \mathcal{B}(E_0)$.
 By taking a limit over finite sets in $G$, we conclude that so is the norm of $\pi\rtimes \lambda_p^{E_0}(f)$.

Since $F$ is a finite subset of $G$, let $M_F^p$ be the collection of those $T\in \mathcal{B}(l^p(G))$ satisfying
$T(l^p(G\setminus F))=\{0\}$ and $T(l^p(F))\subseteq l^p(F)$ (see \cite[Example 1.6]{N. C. Phillips Lp}).
Let $\{e_{s,t}\}_{s,t\in F}$ be the canonical matrix units of $P\mathcal{B}(l^p(G))P\cong M_F^p$ and fix arbitrary $a\in A$.
 We claim that $$(P\otimes I)\pi(a)=(P\otimes I) \pi(a)(P\otimes I)=\sum_{t\in F}e_{t,t}\otimes \pi_0(\alpha_{t^{-1}}(a)).$$
This is clear if $\pi(a)$ is considered as a diagonal matrix in $\mathcal{B}(\bigoplus_{t\in G} E_0)$.
 Identifing $l^p(G)\otimes_p E_0$ with the $L^p$ direct sum $\bigoplus_{t\in G}E_0$, we may simply take the $L^p$ direct sum representation $$\pi(a)=\bigoplus_{t\in G}\pi_0(\alpha_{t^{-1}}(a))\in \mathcal{B}(\bigoplus_{t\in G}E_0).$$
In the spatial $L^p$ operator tensor product picture, we have $$\pi(a)=\sum_{t\in G}e_{t,t}\otimes \pi_0(\alpha_{t^{-1}}(a)),$$
where the convergence is in the strong operator topology.

Let $\lambda_{p}:G\rightarrow \mathcal{B}(l^p(G))$ be the left-regular representation. For each $s\in G$, we have $\lambda_{p}^{E_{0}}(s) =\lambda_{p}(s)\otimes I$.
Thus one can see that $$\begin{aligned}
(P\otimes I)\pi(a)(\lambda_p^{E_0}(s))(P\otimes I) &=(P\otimes I)\pi(a)(\lambda_p(s)\otimes I)(P\otimes I)
 \\&=
 (\sum_{t\in F}e_{t,t}\otimes \pi_0(\alpha_{t^{-1}}(a)))(P\lambda_p(s)P\otimes I)
  \\ &=(\sum_{t\in F}e_{t,t}\otimes \pi_0(\alpha_{t^{-1}}(a)))
   (\sum_{r\in F\cap sF}e_{r,s^{-1}r}\otimes I)
  \\ &=\sum_{r\in F\cap sF}e_{r,s^{-1}r  \otimes  \pi_0(\alpha_{r^{-1}}(a))}
  \in M_F^p\otimes \pi_0(A). \end{aligned}$$
Now if $f=\sum_{s\in G} a_s\delta_s\in C_c(G,A,\alpha)$, then we have
$$(P\otimes I)\pi\rtimes \lambda_p^{E_0}(f)(P\otimes I)=\sum_{s\in G}\sum_{r\in F\cap sF}e_{r,s^{-1}r}\otimes \pi_0(\alpha_{r^{-1}}(a_s))\in M_F^p\otimes \pi_{0}(A).$$
Since $A$ has unique $L^p$ operator norms, the norm of $(P\otimes I)\pi\rtimes\lambda_p^{E_0}(f)(P\otimes I)$ is independent of the choice of nondegenerate $\sigma$-finite isometric representation $\pi_0:A\rightarrow \mathcal{B}(E_0)$.
This proves Claim 2.

By Claims 1 and 2, $\pi\rtimes\lambda_{p}^{E_0}$ is an isometric representation of $F^p_\lambda(G,A,\alpha)$.
\end{proof}

\section{Proof of Theorem \ref{T:main}}
$\\$
In the rest of this paper, let $G$ be a countable discrete Abelian group. Thus we may assume that
the Haar measure on $G$ is the counting measure. Since the dual group $\hat{G}$ is compact, we also denote by $\mu$ the unique normalized positive Haar measure on $\hat{G}$.

%
%

\subsection{The isometry property of $\Phi_{1}$}
$\\$
$\\$
The aim of this subsection is to prove the following result.

\begin{prop}\label{change}
Let $\Phi_{1}:F^{p}(\hat{G},F^{p}(G,A,\alpha),\hat{\alpha})\rightarrow
F^{p}(G,F^{p}(\hat{G},A,\beta),\hat{\beta}\otimes\alpha)$ be the map defined in (\ref{Phi1}) under the assumption of Theorem \ref{T:main}. Then $\Phi_{1}$ is an isometric isomorphism.
\end{prop}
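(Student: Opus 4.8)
The plan is to show that, on the dense subalgebra $C_{c}(\hat{G}\times G,A)$, the map $\Phi_{1}$ is a homomorphism which is isometric and has dense range; it then extends by continuity to an isometric homomorphism of the completions whose range is all of $F^{p}(G,F^{p}(\hat{G},A,\beta),\hat{\beta}\otimes\alpha)$, i.e.\ to an isometric isomorphism. The homomorphism property and the density of the range are direct computations: one checks that $\Phi_{1}$ intertwines the two twisted convolutions (tracking the characters $\gamma(s)$ produced by the dual action $\hat{\alpha}$ on the source side and by $\hat{\beta}\otimes\alpha$ on the target side), and one observes that $F\mapsto\Phi_{1}(F)$ is a bijection of $C_{c}(\hat{G}\times G,A)$ onto $C_{c}(G\times\hat{G},A)$, with inverse $H\mapsto\big((\gamma,s)\mapsto\overline{\gamma(s)}H(s,\gamma)\big)$, and the latter space is dense in the target. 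The crux is therefore the isometry.

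For the isometry I would compare the two defining suprema by decomposing representations of both iterated crossed products into a common set of data. Applying Theorem~\ref{correspondence} twice — first to peel off the outer crossed product, then the inner one (note that $A$ is unital and, by \cite[Theorem 5.5]{Gardella and Thiel convolution}, $F^{p}(G,A,\alpha)$ then has a two-sided contractive approximate identity) — every nondegenerate $\sigma$-finite contractive representation of $F^{p}(\hat{G},F^{p}(G,A,\alpha),\hat{\alpha})$ is the iterated integrated form $(\pi'\rtimes u)\rtimes v$ of a triple $(\pi',u,v)$, where $\pi'$ is a nondegenerate contractive representation of $A$ and $u\colon G\to\mathcal{B}(E)$, $v\colon\hat{G}\to\mathcal{B}(E)$ are isometric representations subject to
\[
u_{s}\pi'(a)u_{s^{-1}}=\pi'(\alpha_{s}(a)),\qquad v_{\gamma}\pi'(a)=\pi'(a)v_{\gamma},\qquad v_{\gamma}u_{s}v_{\gamma^{-1}}=\overline{\gamma(s)}\,u_{s}.
\]
The last two relations encode the dual action $\hat{\alpha}$: using the multiplier maps $i_{A},i_{G}$ from the proof of Theorem~\ref{correspondence}, one computes $\hat{\alpha}_{\gamma}(i_{A}(a))=i_{A}(a)$ and $\hat{\alpha}_{\gamma}(i_{G}(s))=\overline{\gamma(s)}\,i_{G}(s)$, and then passes through the canonical extension $\overline{L}$. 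In exactly the same way, a representation of $F^{p}(G,F^{p}(\hat{G},A,\beta),\hat{\beta}\otimes\alpha)$ is an iterated integrated form $(\rho\rtimes w)\rtimes V$ of a triple $(\rho,V,w)$ with $V_{s}\rho(a)V_{s^{-1}}=\rho(\alpha_{s}(a))$, $w_{\gamma}\rho(a)=\rho(a)w_{\gamma}$ and $V_{s}w_{\gamma}V_{s^{-1}}=\gamma(s)\,w_{\gamma}$, where triviality of $\beta$ forces $w$ to commute with $\rho(A)$ and $\hat{\beta}\otimes\alpha$ produces the last relation. Since $v_{\gamma}u_{s}v_{\gamma^{-1}}=\overline{\gamma(s)}u_{s}$ is equivalent to $u_{s}v_{\gamma}u_{s^{-1}}=\gamma(s)v_{\gamma}$, the two lists of relations coincide under the identification $\pi'\leftrightarrow\rho$, $u\leftrightarrow V$, $v\leftrightarrow w$, so both classes of representations are indexed by the same triples.

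It then remains to check that $\Phi_{1}$ merely relabels the integrated forms. For $F\in C_{c}(\hat{G}\times G,A)$, using $u_{s}v_{\gamma}=\gamma(s)v_{\gamma}u_{s}$, one computes
\[
\big((\pi'\rtimes u)\rtimes v\big)(F)=\sum_{s\in G}\int_{\hat{G}}\pi'(F(\gamma,s))\,u_{s}v_{\gamma}\,d\mu(\gamma)=\sum_{s\in G}\int_{\hat{G}}\gamma(s)\,\pi'(F(\gamma,s))\,v_{\gamma}u_{s}\,d\mu(\gamma),
\]
which, under the identification above and the formula $\Phi_{1}(F)(s,\gamma)=\gamma(s)F(\gamma,s)$, is exactly $\big((\rho\rtimes w)\rtimes V\big)(\Phi_{1}(F))$ — literally the same operator on the same $L^{p}$ space. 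Taking the supremum over all admissible triples (equivalently, over all representations of either system) yields $\|\Phi_{1}(F)\|=\|F\|$, so $\Phi_{1}$ is isometric and the proposition follows.

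I expect the main obstacle to lie in the representation-theoretic bookkeeping of the second paragraph: one must verify that the canonical extension to the left multiplier algebra in Theorem~\ref{correspondence} converts the dual action into the precise scalar relation $\hat{\alpha}_{\gamma}(i_{G}(s))=\overline{\gamma(s)}i_{G}(s)$ (and fixes $i_{A}(a)$), and one must confirm that peeling off crossed products twice, and reassembling in the reverse direction, preserves nondegeneracy, $\sigma$-finiteness and contractivity, so that the two suprema genuinely range over the same family of triples. Once this correspondence is pinned down, the equality of integrated forms, and hence the isometry, is a short computation.
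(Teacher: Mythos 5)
Your proposal is correct and follows essentially the same route as the paper: the paper likewise verifies the homomorphism and density claims by direct computation, then applies Theorem~\ref{correspondence} to peel a representation of one iterated crossed product into a triple, derives the exchange relation $U_{s}V_{\gamma}=\gamma(s)V_{\gamma}U_{s}$, reassembles the triple into a representation of the other iterated crossed product, and matches the integrated forms to get $\|\Phi_{1}(F)\|\leq\|F\|$ with the reverse inequality by symmetry. Your presentation of this as a bijection between representation triples is just a symmetric packaging of the paper's ``reverse the argument'' step, and the points you flag as needing care (the approximate identity for the outer crossed product, preservation of nondegeneracy, $\sigma$-finiteness and contractivity) are exactly the ones the paper checks in its Claims 2 and 3.
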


We first make some preparation.
For the reader's convenience, we recall the definitions of $\hat{\alpha}$ and $\hat{\beta}\otimes\alpha$. For each $\gamma\in \hat{G}$, $\hat{\alpha}_{\gamma}:C_{c}(G,A,\alpha)\rightarrow C_{c}(G,A,\alpha)$ is defined by $\hat{\alpha}_{\gamma}(f)(s):=\overline{\gamma(s)}f(s)$ for all $f\in C_{c}(G,A,\alpha)$ and $s\in G$. Then $\hat{\alpha}_\gamma$ extends to an isometry on $F^p(G,A,\alpha)$.
Let $\beta$ be the trivial action of $\hat{G}$ on $A$. For each $s\in G$, we define an isometric isomorphism $ \hat{\beta}_{s}:C(\hat{G},A,\beta)\rightarrow C(\hat{G},A,\beta)$ by $\hat{\beta}_{s}(\varphi)(\gamma):=\gamma(s)\varphi(\gamma)$ for all $\varphi\in C(\hat{G},A,\beta)$ and $\gamma\in \hat{G}$. Then we can form the tensor product action of $G$ on $F^{p}(\hat{G},A,\beta)$ by
 $(\hat{\beta}\otimes\alpha)_{s}(\varphi)(\gamma):=\gamma(s)\alpha_{s}(\varphi(\gamma)).$ Hence $(\hat{\beta}\otimes \alpha)_{s}$ extends to an element of $\mathrm{Aut}(F^{p}(\hat{G},A,\beta))$.

\begin{lem}\label{dense}
Let $(G,A,\alpha)$ be an $L^p$ operator algebra dynamical system, where $G$ is a countable discrete Abelian group, and $A$ is an $L^p$ operator algebra. Then
\begin{enumerate}
\item[(i)] $C_{c}(\hat{G}\times G,A)$ is a dense subalgebra of $F^{p}(\hat{G},F^{p}(G,A,\alpha),\hat{\alpha})$.
\item[(ii)] $C_{c}(G\times\hat{G},A)$ is a dense subalgebra of $F^{p}(G,F^{p}(\hat{G},A,\beta),\hat{\beta}\otimes\alpha)$.
\end{enumerate}
\end{lem}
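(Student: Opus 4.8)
The plan is to realize $C_c(\hat{G}\times G,A)$ and $C_c(G\times\hat{G},A)$ as genuine subalgebras of the convolution algebras $C_c(\hat{G},F^p(G,A,\alpha),\hat{\alpha})$ and $C_c(G,F^p(\hat{G},A,\beta),\hat{\beta}\otimes\alpha)$ whose completions define the two iterated crossed products, and then to prove density via the fact that the full crossed-product norm is dominated by the $L^1$-norm. Throughout I use that $G$ countable discrete abelian forces $\hat{G}$ to be compact metrizable, so $C_c(\hat{G},B)=C(\hat{G},B)$ for every Banach algebra $B$, the Haar measure $\mu$ on $\hat{G}$ is normalized, and the Haar measure on $G$ is counting measure; in particular $C_c(G,A,\alpha)$ is exactly the space of finitely supported functions $G\to A$.

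For (i), I first set up the identification sending $F\in C_c(\hat{G}\times G,A)$ to the map $\Psi_F\colon\hat{G}\to F^p(G,A,\alpha)$, $\Psi_F(\gamma)=F(\gamma,\cdot)$. Compact support of $F$ means $F(\gamma,s)=0$ for $s$ outside a fixed finite set $F_0\subseteq G$, so each $\Psi_F(\gamma)$ lies in $C_c(G,A,\alpha)\subseteq F^p(G,A,\alpha)$. Continuity of $\gamma\mapsto\Psi_F(\gamma)$ into $F^p(G,A,\alpha)$ follows from $\|\Psi_F(\gamma)-\Psi_F(\gamma')\|_{F^p(G,A,\alpha)}\le\sum_{s\in F_0}\|F(\gamma,s)-F(\gamma',s)\|_A$, using joint continuity of $F$ and finiteness of $F_0$; hence $\Psi_F\in C_c(\hat{G},F^p(G,A,\alpha))$. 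That $C_c(\hat{G}\times G,A)$ is closed under the twisted convolution of $C_c(\hat{G},F^p(G,A,\alpha),\hat{\alpha})$ is a direct computation: evaluating $\Psi_{F_1}*\Psi_{F_2}$ at a point $(\gamma,s)$ yields a finite sum over $t\in G$ (both supports being finite) of $A$-valued terms integrated against $d\mu(\chi)$ over the compact group $\hat{G}$; since $\hat{\alpha}_\chi$ preserves $C_c(G,A,\alpha)$, the outcome is continuous in $\gamma$ and finitely supported in $s$, hence lies in $C_c(\hat{G}\times G,A)$.

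The core of the argument is density. I use the standard estimate that for the full crossed product $\|g\|_{F^p(\hat{G},B,\hat{\alpha})}\le\|g\|_{L^1(\hat{G},B)}$, valid because every contractive covariant pair $(\pi,v)$ satisfies $\|(\pi\rtimes v)(g)\|\le\int_{\hat{G}}\|g(\gamma)\|_B\,d\mu(\gamma)$. As $\hat{G}$ is compact with normalized $\mu$, this gives $\|g\|_{F^p}\le\sup_{\gamma}\|g(\gamma)\|_B$, reducing density to uniform approximation. Given $\Psi\in C(\hat{G},F^p(G,A,\alpha))$ and $\varepsilon>0$, compactness of $\hat{G}$ yields a finite open cover $\{U_i\}$ with points $\gamma_i\in U_i$ on which $\Psi$ oscillates by less than $\varepsilon/2$; using density of $C_c(G,A,\alpha)$ in $F^p(G,A,\alpha)$ I choose $b_i\in C_c(G,A,\alpha)$ with $\|\Psi(\gamma_i)-b_i\|<\varepsilon/2$, and with a continuous partition of unity $\{\phi_i\}$ subordinate to $\{U_i\}$ I set $\Psi_\varepsilon=\sum_i\phi_i\,b_i$. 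Then $\Psi_\varepsilon$ corresponds to an element of $C_c(\hat{G}\times G,A)$ (support in $G$ lying in the finite set $\bigcup_i\operatorname{supp}b_i$, continuous in $\gamma$), and the convexity estimate $\|\Psi(\gamma)-\Psi_\varepsilon(\gamma)\|\le\sum_i\phi_i(\gamma)\,\|\Psi(\gamma)-b_i\|<\varepsilon$ gives $\|\Psi-\Psi_\varepsilon\|_{F^p}<\varepsilon$. Since $C_c(\hat{G},F^p(G,A,\alpha))$ is dense in $F^p(\hat{G},F^p(G,A,\alpha),\hat{\alpha})$ by definition, this proves (i).

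Part (ii) follows the same scheme but is simpler because the outer group $G$ is discrete. I identify $F\in C_c(G\times\hat{G},A)$ with the finitely supported map $s\mapsto F(s,\cdot)\in C(\hat{G},A)\subseteq F^p(\hat{G},A,\beta)$, note that $C(\hat{G},A)=C_c(\hat{G},A,\beta)$ is dense in $F^p(\hat{G},A,\beta)$ by definition, and check the subalgebra property from the twisted convolution over $G$ (a finite sum, with $(\hat{\beta}\otimes\alpha)_r$ preserving $C(\hat{G},A)$ and ordinary $\hat{G}$-convolution of continuous functions staying continuous). For density, discreteness of $G$ gives $\|g\|\le\sum_{s\in G}\|g(s)\|_{F^p(\hat{G},A,\beta)}$, so any finitely supported $g$ is approximated by replacing each of its finitely many coefficients $g(s)$ with an element of $C(\hat{G},A)$, yielding an element of $C_c(G\times\hat{G},A)$. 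I expect the only real obstacle to be the density step in (i): one must combine the $L^1$-domination of the crossed-product norm with a partition-of-unity approximation that simultaneously keeps the approximant continuous in the compact variable $\gamma$ and finitely supported in the discrete variable $s$. The subalgebra verifications are routine bookkeeping with the two twisted convolutions and the dual actions, and (ii) is just the finite-sum reduction of the same ideas.
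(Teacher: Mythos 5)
Your proposal is correct and follows essentially the same route as the paper: identify $C_c(\hat G\times G,A)$ with $C_c(\hat G,C_c(G,A,\alpha))$ inside the convolution algebra, verify closure under the twisted convolution directly, and reduce density to approximating continuous $F^p(G,A,\alpha)$-valued functions by ones with values in the dense subalgebra $C_c(G,A,\alpha)$. The only difference is that where the paper cites Williams' Lemmas 1.108 and 1.87, you unpack them explicitly (the $L^1$-domination of the crossed-product norm plus the partition-of-unity approximation), which is exactly the content of those cited results.
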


\begin{proof}
(i) If $F\in C_{c}(\hat{G}\times G,A)$, then $\lambda_{F}(\gamma)(s):=F(\gamma,s)$ defines an element $\lambda_{F}\in C_{c}(\hat{G},C_{c}(G,A))$ $\subseteq C_{c}(\hat{G},F^{p}(G,A,\alpha))$ for all $\gamma\in \hat{G}$ and $s\in G$. That is, $\lambda_{F}(\gamma)\in C_{c}(G,A)$ for all $\gamma\in \hat{G}$. For $\sigma\in \hat{G}$ and $F_{1},F_{2}\in C_{c}(\hat{G}\times G,A)$, note that $$\lambda_{F_{1}}*\lambda_{F_{2}}(\sigma)=\int_{\hat{G}}\lambda_{F_{1}}(\gamma)*\hat{\alpha}_
{\gamma}(\lambda_{F_{2}}(\gamma^{-1}\sigma))d\mu(\gamma)$$ takes value in $F^{p}(G,A,\alpha)$; moreover,
by \cite[Lemma 1.108]{Williams}, the former integral takes value in $C_{c}(G,A)\subseteq F^{p}(G,A,\alpha)$.
For each $t\in G$, we have
$$\begin{aligned}
\lambda_{F_{1}}*\lambda_{F_{2}}(\sigma)(t)
 &=\int_{\hat{G}}\sum_{s\in G}\lambda_{F_{1}}(\gamma)(s)\alpha_{s}(\hat{\alpha}_{\gamma}
 (\lambda_{F_{2}}(\gamma^{-1}\sigma))(s^{-1}t))d\mu(\gamma) \\
 &=\int_{\hat{G}}\sum_{s\in G}\lambda_{F_{1}}(\gamma)(s) \overline{\gamma(s^{-1}t)}\alpha_{s}(\lambda_{F_{2}}(\gamma^{-1}\sigma)(s^{-1}t))d\mu(\gamma).
\end{aligned}$$
For convenience, we can rewrite this equality as
$$F_{1}*F_{2}(\sigma,t)=\int_{\hat{G}}\sum_{s\in G} F_{1}(\gamma,s)\overline{\gamma(s^{-1}t)}\alpha_{s}(F_{2}(\gamma^{-1}\sigma,s^{-1}t))d\mu(\gamma).$$
Therefore we can view $C_{c}(\hat{G}\times G,A)$ as a subalgebra of $F^{p}(\hat{G},F^{p}(G,A,\alpha),\hat{\alpha})$.

Since $C_c(G,A,\alpha)$ is dense in $F^p(G,A,\alpha)$,
by \cite[Lemma 1.87]{Williams}, $C(\hat{G},C_c(G,A,\alpha),\hat{\alpha})$ is dense in $C(\hat{G},F^p(G,A,\alpha),\hat{\alpha})$.
Thus $C_{c}(\hat{G}\times G,A)$ is dense in $F^{p}(\hat{G},F^{p}(G,A,\alpha),\hat{\alpha})$.

Using a similar argument (i) as in the proof of Lemma \ref{dense}, one can prove (ii) by exchanging $G$ with $\hat{G}$.
\end{proof}

%
%
%
%

%

\begin{proof}[Proof of Proposition \ref{change}]
The proof consists of four claims.

{\bf Claim 1.} $\Phi_{1}$ is a homomorphism from  $C_{c}(\hat{G}\times G,A)$ onto $C_{c}(G\times \hat{G},A)$.

Obviously, $\Phi_1$ is a surjective map. Next, we will show that $\Phi_1$ is a homomorphism.
For each $F_{1},F_{2}\in C_{c}(\hat{G}\times G,A)$, we have
$$\lambda_{\Phi_{1}(F_{1})}*\lambda_{\Phi_{1}(F_{2})}(t)=
\sum_{s\in G}\lambda_{\Phi_{1}(F_{1})}(s)*(\hat{\beta}\otimes\alpha)_{s}(\lambda_{\Phi_{1}(F_{2})}(s^{-1}t)).$$
Then we have

$\begin{aligned}
\Phi_{1}(F_{1})*\Phi_{1}(F_{2})(t,\sigma)
&=\lambda_{\Phi_{1}(F_{1})}*\lambda_{\Phi_{1}(F_{2})}(t)(\sigma)\\
 &=\sum_{s\in G}\int_{\hat{G}}\lambda_{\Phi_{1}(F_{1})}(s)(\gamma)\beta_{\gamma}( (\hat{\beta}\otimes\alpha)_{s}(\lambda_{\Phi_{1}(F_{2})}(s^{-1}t))(\gamma^{-1}\sigma))d\mu(\gamma)\\
 &=\sum_{s\in G}\int_{\hat{G}}\lambda_{\Phi_{1}(F_{1})}(s)(\gamma) (\gamma^{-1}\sigma)(s)
 \alpha_{s}(\lambda_{\Phi_{1}(F_{2})}(s^{-1}t)(\gamma^{-1}\sigma))d\mu(\gamma)\\
 &=\sum_{s\in G}\int_{\hat{G}} \gamma(s)F_{1} (\gamma,s) (\gamma^{-1}\sigma)(s) (\gamma^{-1}\sigma)(s^{-1}t)\alpha_{s}(F_{2} (\gamma^{-1}\sigma,s^{-1}t))d\mu(\gamma)\\
&=\sum_{s\in G}\int_{\hat{G}} F_{1}(\gamma,s) \overline{\gamma(s^{-1}t)}\sigma(t)\alpha_{s}(F_{2}(\gamma^{-1}\sigma,s^{-1}t))d\mu(\gamma)\\
&=\int_{\hat{G}}\sum_{s\in G}F_{1}(\gamma,s) \overline{\gamma(s^{-1}t)}\sigma(t)\alpha_{s}(F_{2}(\gamma^{-1}\sigma,s^{-1}t))d\mu(\gamma)\\
&=\sigma(t) \lambda_{F_{1}}*\lambda_{F_{2}}(\sigma)(t)=\Phi_{1}(F_{1}*F_{2})(t,\sigma).\end{aligned}$\\
This shows that $\Phi_{1}(F_{1}*F_{2})=\Phi_{1}(F_{1})*\Phi_{1}(F_{2}).$
Hence $\Phi_{1}$ is a homomorphism from $C_{c}(\hat{G}\times G,A)$ onto $C_{c}(G\times\hat{G},A)$.

Since $\Phi_{1}$ maps a dense subalgebra onto a dense subalgebra, it suffices to show that $\Phi_{1}$ is isometric for the universal norms.

Let $L$ be a nondegenerate $\sigma$-finite contractive representation of $F^{p}(G,F^{p}(\hat{G},A,\beta),\hat{\beta}\otimes \alpha)$. By Theorem~\ref{correspondence},
there exists a nondegenerate $\sigma$-finite contractive covariant representation $(R,U)$ of $(G,F^{p}(\hat{G},A,\beta),\hat{\beta}\otimes\alpha)$ such that $L=R\rtimes U$, and there exists a nondegenerate $\sigma$-finite contractive covariant representation $(\pi,V)$ of $(\hat{G},A,\beta)$ such that $R=\pi\rtimes V$.

{\bf Claim 2.}  $U_{s}V_{\gamma}=\gamma(s)V_{\gamma}U_{s}$ for each $\gamma\in \hat{G}$ and each $s\in G$.

Let $i_{\hat{G}}$ be the canonical map of $\hat{G}$ into
$M_{L}(F^{p}(\hat{G}))$. Then
$\begin{aligned} i_{\hat{G}}(\gamma)\varphi(\sigma) &=\beta_{\gamma}(\varphi(\gamma^{-1}\sigma)) =\varphi(\gamma^{-1}\sigma)\end{aligned}$
for all $\gamma,\sigma\in \hat{G}$ and $\varphi\in C(\hat{G})$. For each $s\in G$,
 $$\begin{aligned} \hat{\beta}_{s}\circ i_{\hat{G}}(\gamma)\varphi(\sigma)  &=
\sigma(s)i_{\hat{G}}(\gamma)\varphi(\sigma)  =\sigma(s)\varphi(\gamma^{-1}\sigma) =\gamma(s)(\gamma^{-1}\sigma)(s)\varphi(\gamma^{-1}\sigma)\\
&=\gamma(s)\hat{\beta}_{s}\varphi(\gamma^{-1}\sigma) =\gamma(s)i_{\hat{G}}(\gamma)\circ \hat{\beta}_{s}\varphi(\sigma),\end{aligned}$$
it follows that
$\hat{\beta}_{s}\circ i_{\hat{G}}(\gamma)=\gamma(s)i_{\hat{G}}(\gamma)\circ \hat{\beta}_{s}.$

Now if $a\in A$, $\varphi\in C(\hat{G})$ and $f\in C_{c}(G)$, then the functions which are elementary tensors of the form $\varphi\otimes f\otimes a$ span a dense subset of $C_{c}(\hat{G}\times G,A)$.
Since
$$\begin{aligned} U_{s}V_{\gamma}L(\varphi\otimes f\otimes a) &= U_{s}V_{\gamma} R\rtimes U(\varphi\otimes f\otimes a) =U_{s}V_{\gamma}\pi(a)V(\varphi)U(f)\\
& =U_{s}\pi(a)V_{\gamma}V(\varphi)U(f)
=U_{s}\pi(a)V(i_{\hat{G}}(\gamma)\varphi)U(f) \\
 &=U_{s}\pi\rtimes V(i_{\hat{G}}(\gamma)(\varphi)\otimes a)U(f)
 =\pi\rtimes V((\hat{\beta}\otimes \alpha)_{s}(i_{\hat{G}}(\gamma)(\varphi)\otimes a))U_{s}U(f)\\
 &=\pi(\alpha_{s}(a))V(\hat{\beta}_{s}\circ i_{\hat{G}}(\gamma)\varphi)U_{s}U(f)
 =\pi(\alpha_{s}(a))V(\gamma(s)i_{\hat{G}}(\gamma)\circ \hat{\beta}_{s}(\varphi))U_{s}U(f)\\
 &=\gamma(s)\pi(\alpha_{s}(a))V_{\gamma}V(\hat{\beta}_{s}(\varphi))U_{s}U(f)
 =\gamma(s)V_{\gamma}\pi(\alpha_{s}(a))V(\hat{\beta}_{s}(\varphi))U_{s}U(f)\\
 &=\gamma(s)V_{\gamma}U_{s}\pi\rtimes V((\hat{\beta}\otimes\alpha)_{s^{-1}}(\hat{\beta}_{s}(\varphi)\otimes \alpha_{s}(a))U(f)\\
&=\gamma(s)V_{\gamma}U_{s}\pi(a)V(\varphi)U(f) =\gamma(s)V_{\gamma}U_{s}L(\varphi\otimes f\otimes a), \end{aligned}$$
 and $L$ is nondegenerate, it follows that \begin{align}\label{eq:exchange}U_{s}V_{\gamma}=\gamma(s)V_{\gamma}U_{s}.\end{align}

{\bf Claim 3.} $(\pi,U)$ is a $\sigma$-finite covariant representation of $(G,A,\alpha)$ and $(\pi\rtimes U,V)$ is a $\sigma$-finite covariant representation of $(\hat{G},F^{p}(G,A,\alpha),\hat{\alpha})$.

For each $b\in A$,
a similar computation as in the proof of (\ref{eq:exchange}) shows that
$$\begin{aligned} U_{s}\pi(b)\pi(a)V(\varphi)U(f) &= U_{s}\pi(ba)V(\varphi)U(f) \\
 &=U_{s}\pi\rtimes V(\varphi\otimes ba)U(f)\\
 &=\pi\rtimes V((\hat{\beta}\otimes\alpha)_{s}(\varphi\otimes ba))U_{s}U(f)\\
 &=\pi(\alpha_{s}(b))\pi\rtimes V(\hat{\beta}_{s}(\varphi)\otimes \alpha_{s}(a))U_{s}U(f)\\
 &=\pi(\alpha_{s}(b))U_{s}\pi\rtimes V(\varphi\otimes a)U(f)\\
 &=\pi(\alpha_{s}(b))U_{s}\pi(a)V(\varphi)U(f). \end{aligned}$$
Thus $(\pi,U)$ is a covariant representation of $(G,A,\alpha)$. Obviously, the representation $\pi$ is $\sigma$-finite.

Since the action of $\hat{G}$ on $A$ is trivial, by (\ref{eq:exchange}),  we have
$$\begin{aligned} V_{\gamma}\pi\rtimes U(a\otimes f)  =\pi(a) V_{\gamma}\sum_{s\in G} f(s)U_{s} =\pi(a)\sum_{s\in G} f(s)\overline{\gamma(s)}U_{s}V_{\gamma} =\pi\rtimes U(\hat{\alpha}_{\gamma}(a\otimes f))V_{\gamma}. \end{aligned}$$
Hence $(\pi\rtimes U,V)$ is a covariant representation of $(\hat{G},F^{p}(G,A,\alpha),\hat{\alpha})$.
Since $U$ is $\sigma$-finite, it follows that $\pi\rtimes U$ is $\sigma$-finite.

 {\bf Claim 4.} $||\Phi_{1}(F)||\leq ||F||.$

 From Claim 3, $L'=(\pi\rtimes U)\rtimes V$ is a contractive representation of $F^{p}(\hat{G},F^{p}(G,A,\alpha),\hat{\alpha})$.
 By the Fubini theorem and (\ref{eq:exchange}), we have
 $$\begin{aligned} L(\Phi_{1}(F)) &= \sum_{s\in G} \pi\rtimes V(\lambda_{\Phi_{1}(F)}(s))U_{s} =\sum_{s\in G}\int_{\hat{G}}\pi(\Phi_{1}(F)(s,\gamma))V_{\gamma}U_{s}d\mu(\gamma)\\
 &=\sum_{s\in G}\int_{\hat{G}}\pi(F(\gamma,s))\gamma(s)V_{\gamma}U_{s}d\mu(\gamma)=\int_{\hat{G}}\sum_{s\in G}\pi(F(\gamma,s))U_{s}V_{\gamma}d\mu(\gamma)
 =L'(F). \end{aligned}$$
 It follows that $||\Phi_{1}(F)||\leq ||F||.$ Reversing the above argument gives $||F||\leq ||\Phi_{1}(F)||$. This completes the proof.
\end{proof}

\subsection{A characterization of $\Phi_{2}$ being an isomorphism}
$\\$
$\\$
The aim of this subsection is to prove the following.

\begin{prop}\label{injective}
Let $\Phi_2:F^{p}(G,
F^{p}(\hat{G},A,\beta),\hat{\beta}\otimes\alpha)\rightarrow F^{p}(G,C_{0}(G,A),\mathrm{lt}\otimes\alpha)$ be defined in (\ref{Phi2}) under the assumption of Theorem \ref{T:main}. Then
$\Phi_{2}$ is an isomorphism if and only if either $G$ is finite or $p=2$; in particular, $\Phi_{2}$ is an isometric isomorphism for $p=2$.
\end{prop}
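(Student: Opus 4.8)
The plan is to read $\Phi_2$ as $\varphi_2\rtimes\mathrm{id}$, the crossed product by the identity action of $G$ of the coefficient map $\varphi_2\colon F^p(\hat G,A,\beta)\to C_0(G,A)$, and to note that $\varphi_2$ is nothing but $\Gamma_p\otimes\mathrm{id}_A$, the spatial tensor of the Gelfand--Fourier transform $\Gamma_p\colon F^p_\lambda(\hat G)\to C_0(G)$ with the identity on $A$. Here I use that $\beta$ is trivial and $\hat G$ is compact, hence amenable, so that $F^p(\hat G,A,\beta)=F^p_\lambda(\hat G,A,\beta)=F^p_\lambda(\hat G)\otimes_p A$ by \cite[Theorem 7.1]{Phillips look like} and Lemma \ref{does not depend}, while $C_0(G,A)=C_0(G)\otimes_p A$. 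Thus the entire question is governed by $\Gamma_p$: the strategy is to prove that $\varphi_2$ is an isomorphism in the two good cases and transport this to $\Phi_2$ by functoriality of the full $L^p$ crossed product, and to disprove it in the remaining case by exhibiting a sequence, supported at the identity of $G$, on which $\Phi_2$ is not bounded below.

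For the \emph{if} direction I treat the cases separately. If $p=2$, then $\Gamma_2$ is the classical isomorphism $C^*_r(\hat G)\cong C_0(G)$ given by the Fourier transform; being a $*$-isomorphism of commutative $C^*$-algebras it is a complete isometry, so Lemma \ref{isometric tensor} makes $\varphi_2=\Gamma_2\otimes\mathrm{id}_A$ an isometric isomorphism of $F^2(\hat G,A,\beta)$ onto $C_0(G,A)$. Since $\varphi_2$ is equivariant for $\hat\beta\otimes\alpha$ and $\mathrm{lt}\otimes\alpha$ (part of its construction) and bijective, $\varphi_2^{-1}$ is equivariant too, and functoriality makes $\Phi_2=\varphi_2\rtimes\mathrm{id}$ and $\varphi_2^{-1}\rtimes\mathrm{id}$ mutually inverse isometric homomorphisms; this yields the ``in particular'' clause. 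If $G$ is finite, then $\hat G$ is finite and every algebra in sight is finite-dimensional; there $\Gamma_p$ is the Fourier transform on a finite abelian group, a bijective algebra homomorphism turning convolution into pointwise multiplication, hence an isomorphism (not necessarily isometric). One then checks directly that $f\mapsto\varphi_2\circ f$ and $g\mapsto\varphi_2^{-1}\circ g$ are mutually inverse bounded homomorphisms of the finite-dimensional crossed products.

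For the \emph{only if} direction I argue the contrapositive: if $G$ is infinite and $p\neq 2$, then $\Phi_2$ is not bounded below. I use two isometric embeddings. First, for discrete $G$ and an $L^p$ operator algebra $B$ with an isometric $G$-action $\eta$, the map $b\mapsto b\delta_e$ embeds $B$ isometrically into $F^p(G,B,\eta)$: contractivity of covariant representations gives $\|b\delta_e\|\le\|b\|_B$, and the regular covariant representation attached to an isometric representation of $B$ gives the reverse inequality. Second, applying the regular covariant representation of $(\hat G,A,\beta)$ attached to an isometric representation $\pi_0$ of $A$ shows that $\varphi\mapsto\varphi\otimes 1_A$ satisfies $\|\varphi\otimes 1_A\|_{F^p(\hat G,A,\beta)}\ge\|\lambda(\varphi)\otimes\pi_0(1_A)\|=\|\varphi\|_{F^p_\lambda(\hat G)}$. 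Now pick $\varphi_n\in C(\hat G)$ with $\|\hat\varphi_n\|_\infty\le 1$ but $\|\varphi_n\|_{F^p_\lambda(\hat G)}\to\infty$, and set $F_n(s,\gamma):=\delta_{s,e}\,\varphi_n(\gamma)\,1_A\in C_c(G\times\hat G,A)$. Since $\Phi_2(F_n)(s,t)=\delta_{s,e}\,\hat\varphi_n(t)\,1_A$ is supported at $s=e$, the first embedding and contractivity give $\|\Phi_2(F_n)\|\le\|\hat\varphi_n 1_A\|_{C_0(G,A)}=\|\hat\varphi_n\|_\infty\le 1$, while the embeddings together give $\|F_n\|\ge\|\varphi_n\|_{F^p_\lambda(\hat G)}\to\infty$. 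Hence $\|\Phi_2(F_n)\|/\|F_n\|\to 0$, so $\Phi_2$ is not a topological isomorphism.

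The main obstacle is producing the sequence $\varphi_n$, i.e.\ the classical fact that for infinite discrete abelian $G$ and $p\neq 2$ the $\ell^\infty(G)$ norm does not dominate the $L^p(\hat G)$ Fourier-multiplier norm (equivalently, $\Gamma_p$ is not bounded below; equivalently the multiplier algebra $M_p(G)$ is strictly smaller than $\ell^\infty(G)$ in norm). I expect this harmonic-analytic input to be the decisive point. For $G=\mathbb{Z}$ it can be obtained from a Khintchine-inequality argument: if all $\pm 1$-valued symbols were uniformly bounded $L^p$-multipliers, averaging over random signs on $\{1,\dots,N\}$ would force $\big(\sum_{k=1}^N|\hat f(k)|^2\big)^{1/2}\lesssim\|f\|_{L^p(\mathbb{T})}$, which fails for $1<p<2$ upon testing the Dirichlet kernel, while $p=1$ is immediate from $\|D_N\|_{1}\approx\log N$ and $p>2$ follows by the duality $M_p=M_{p'}$. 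For a general infinite $G$ one transfers these examples either along an embedded copy of $\mathbb{Z}$ or along quotients $\mathbb{Z}/n\mathbb{Z}$ of unbounded order, using the standard behaviour of multipliers under passage to subgroups and quotients.
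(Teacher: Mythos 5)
Your proposal is essentially correct and shares its skeleton with the paper's proof: both identify $\Phi_2=\varphi_2\rtimes\mathrm{id}$ with $\varphi_2=\Gamma_p\otimes\mathrm{id}_A$ via amenability of $\hat G$ and the universal property, and both handle $p=2$ by noting $\Gamma_2$ is a $*$-isomorphism so that $\varphi_2^{-1}\rtimes\mathrm{id}$ inverts $\Phi_2$. Where you genuinely diverge is the negative direction. The paper first proves $\Phi_2$ is always injective (via the conditional expectations $E_s$ of Proposition \ref{expection} and a separate argument, Claim 2 of Lemma \ref{extention of Gelfand transformation}, that $\varphi_2$ is injective), and then deduces failure of surjectivity of $\Phi_2$ from failure of surjectivity of $\Gamma_p$. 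You instead exhibit a sequence $F_n=(\varphi_n\otimes 1_A)\delta_e$ on which $\Phi_2$ is not bounded below, using two norm estimates (the isometric copy of the coefficient algebra at $e\in G$, and the lower bound $\|\varphi\otimes 1_A\|\ge\|\varphi\|_{F^p_\lambda(\hat G)}$ from the regular representation); by the open mapping theorem this rules out $\Phi_2$ being even an algebraic isomorphism. Your route is arguably cleaner in that it bypasses Proposition \ref{expection} and the injectivity of $\varphi_2$ entirely, and it gives quantitative information; the paper's route yields the extra fact that $\Phi_2$ is injective for all $p$. One economy you missed: the harmonic-analytic input you propose to prove from scratch (existence of $\varphi_n$ with $\|\hat\varphi_n\|_\infty\le 1$ but $\|\varphi_n\|_{F^p_\lambda(\hat G)}\to\infty$) is exactly equivalent, via the open mapping theorem applied to the injective, contractive, dense-range map $\Gamma_p$, to the non-surjectivity statement already cited as Theorem \ref{Gelfand}(ii), so the Khintchine/Dirichlet-kernel/transference discussion is unnecessary. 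Two small inaccuracies worth fixing: when $G$ is finite the crossed products are not finite-dimensional unless $A$ is (only $F^p_\lambda(\hat G)$ and $C_0(G)$ are, which is all you need to invert $\Gamma_p$ boundedly and then tensor with $\mathrm{id}_A$ over the finite-dimensional factor); and Lemma \ref{isometric tensor} concerns varying the representation of $A$, not of the first tensor factor, so the isometry of $\Gamma_2\otimes\mathrm{id}_{A}$ should be justified, as the paper does, by the fact that a $*$-isomorphism is a complete isometry together with \cite[Corollary 1.12]{Piser}.
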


Clearly, $\Phi_{2}$ is defined in terms of the Gelfand transformation of reduced group $L^{p}$ operator algebra.
The following theorem concerning the Gelfand transformation of $F^p_\lambda(G)$ will be useful later.

\begin{thm}[{\cite[Theorem 2.7]{Wang}} or {\cite[Proposition
3.22 and Corollary 3.20]{Gardella and Thiel}}]\label{Gelfand}
Let $G$ be a locally compact Abelian group and $p\in[1,\infty)\setminus \{2\}$. Then
\begin{enumerate}
 \item[(i)] the Gelfand transformation $\Gamma_{p}:F_{\lambda}^{p}(G)\rightarrow C_{0}(\hat{G})$ is an injective contractive map with dense range;
 \item[(ii)] the Gelfand transformation $\Gamma_{p}$ is surjective if and only if $G$ is finite.
\end{enumerate}
\end{thm}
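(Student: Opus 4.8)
The plan is to exploit throughout that $\Gamma_p$ extends the Fourier transform: for $\varphi\in L^1(G)$ one has $\Gamma_p(\lambda_p(\varphi))=\hat\varphi$, and $\lambda_p(L^1(G))$ is dense in $F_\lambda^p(G)$. I would then treat parts (i) and (ii) separately.

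\textbf{Part (i).} That $\Gamma_p$ is contractive is the standard Gelfand estimate $\|\Gamma_p(a)\|_\infty=r(a)\le\|a\|$, where $r$ denotes the spectral radius in the commutative Banach algebra $F_\lambda^p(G)$. For injectivity I would use that each $a\in F_\lambda^p(G)\subseteq\mathcal B(L^p(G))$ is a convolution (Fourier multiplier) operator whose symbol is exactly $\Gamma_p(a)$; since the symbol determines the operator on a dense subspace of $L^p(G)$, the condition $\Gamma_p(a)=0$ forces $a=0$ (for $p=1$ this is just injectivity of the Fourier transform on $F_\lambda^1(G)=L^1(G)$). For dense range, note that the range of $\Gamma_p$ contains $\{\hat\varphi:\varphi\in L^1(G)\}$, the Fourier algebra $A(\hat G)$; this is a conjugation-closed, point-separating, nowhere-vanishing subalgebra of $C_0(\hat G)$, hence dense by Stone--Weierstrass, and density of the range then follows from continuity of $\Gamma_p$ together with density of $\lambda_p(L^1(G))$.

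\textbf{Part (ii), finite case.} If $G$ is finite then $\hat G$ is finite with $|\hat G|=|G|$, the algebra $F_\lambda^p(G)=\lambda_p(\mathbb C[G])$ has dimension $|G|$, and $C_0(\hat G)=C(\hat G)\cong\mathbb C^{|G|}$ has the same dimension; an injective linear map between finite-dimensional spaces of equal dimension is onto, so $\Gamma_p$ is (bijective, in particular) surjective.

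\textbf{Part (ii), infinite case.} Here lies the main difficulty. Assuming $G$ infinite, I would show $\Gamma_p$ is not onto by observing that every element of its range is the symbol of a bounded convolution operator on $L^p(G)$, i.e. a $p$-multiplier, so it suffices to produce a single $g\in C_0(\hat G)$ that is \emph{not} a $p$-multiplier. For $p=1$ this is immediate, since $A(\hat G)\subsetneq C_0(\hat G)$ whenever $G$ is infinite. For $p\in(1,\infty)\setminus\{2\}$ it is the classical failure of the identity $M_p(\hat G)=L^\infty(\hat G)$: on $\mathbb Z$ (resp. $\mathbb R$) there exist continuous symbols on $\mathbb T$ (resp. $C_0$ symbols on $\mathbb R$) that are not $\ell^p$- (resp. $L^p$-) multipliers when $p\ne2$. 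The step I expect to be the real obstacle is making this uniform over all infinite locally compact Abelian $G$: one reduces to the model groups $\mathbb Z,\mathbb R,\mathbb T$ by the structure theory of such groups together with de~Leeuw-type transference of multiplier norms, which transports a continuous non-$p$-multiplier symbol into $C_0(\hat G)$. This yields that $\Gamma_p$ is not surjective for infinite $G$. Equivalently, one may argue by contradiction: surjectivity together with (i) and the open mapping theorem would force $\|a\|_{F_\lambda^p(G)}\le C\|\Gamma_p(a)\|_\infty$ for some constant $C$, contradicting the unboundedness of the $p$-multiplier norm over the unit ball of $C_0(\hat G)$ when $p\ne2$.
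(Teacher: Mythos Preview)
The paper does not prove this theorem: it is quoted from the cited external sources (Wang--Zeng and Gardella--Thiel) and used as a black box in the proof of Lemma~\ref{extention of Gelfand transformation}. There is therefore no in-paper argument to compare your proposal against.

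That said, your outline is essentially the standard route taken in those references. Part~(i) is routine Gelfand theory together with Stone--Weierstrass, and your sketch is sound; the only point worth tightening is that identifying the multiplier symbol of $a\in F_\lambda^p(G)$ with $\Gamma_p(a)$ implicitly uses that the character space of $F_\lambda^p(G)$ is $\hat G$, which one obtains by restricting characters to the dense subalgebra $L^1(G)$. For part~(ii) you have correctly located the crux: when $G$ is infinite and $p\neq 2$, one must exhibit a function in $C_0(\hat G)$ that is not the symbol of a bounded $L^p$-convolution operator, and reducing to the model groups $\mathbb Z$, $\mathbb R$, $\mathbb T$ via structure theory together with de~Leeuw--type transference is exactly how the cited papers proceed. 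Your alternative open-mapping formulation is equivalent and equally standard.
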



\begin{lem}\label{extention of Gelfand transformation}
Let $A$ be a separable $L^p$ operator algebra with unique $L^p$ operator matrix norms, $G$ be a countable discrete Abelian group, and  $\varphi_2:F^{p}(\hat{G},A,\beta)\rightarrow C_{0}(G,A) $ be defined in (\ref{Phi2}).
\begin{enumerate}
\item[(i)] If $p\in[1,\infty)\setminus \{2\}$, then $\varphi_2$ is an isomorphism if and only if $G$ is finite.
\item[(ii)] If $p=2$, then $\varphi_2$ is an isometric isomorphism.
 \end{enumerate}
\end{lem}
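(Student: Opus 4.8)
The plan is to analyze $\varphi_2 \colon F^p(\hat G, A, \beta) \to C_0(G,A)$ by reducing it to the scalar Gelfand transformation $\Gamma_p \colon F^p_\lambda(\hat G) \to C_0(G)$ from Theorem \ref{Gelfand}, tensored with the identity on $A$. Since $\beta$ is the trivial action of $\hat G$ on $A$, the crossed product $F^p(\hat G, A, \beta)$ should behave like the (spatial) tensor product $F^p(\hat G) \otimes_p A$, and because $\hat G$ is compact Abelian hence amenable, $F^p(\hat G)$ coincides isometrically with $F^p_\lambda(\hat G)$. The map $\varphi_2$ sends $\varphi \otimes a \mapsto \Gamma_p(\varphi)\otimes a$, so I would first establish rigorously that $\varphi_2$ is, up to these identifications, exactly $\Gamma_p \otimes \mathrm{id}_A$. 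The hypothesis that $A$ has unique $L^p$ operator matrix norms enters precisely here, via Lemma \ref{isometric tensor}, to guarantee that the tensor-product norms are well defined and independent of the representation of $A$, so that ``$\Gamma_p \otimes \mathrm{id}_A$'' is meaningful.

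For part (i), assume $p \in [1,\infty)\setminus\{2\}$. The forward direction is the interesting one: if $\varphi_2$ is an isomorphism (in particular surjective), I would argue that surjectivity of $\Gamma_p \otimes \mathrm{id}_A$ forces surjectivity of $\Gamma_p$ itself. Concretely, evaluating at any fixed state or bounded functional on $A$ that detects a nonzero element, or simply slicing with a functional $\psi \in A^*$ to collapse $C_0(G,A)$ onto $C_0(G)$, one recovers that $\Gamma_p$ must have full range $C_0(\hat G)$; by Theorem \ref{Gelfand}(ii) this happens if and only if $G$ is finite (recalling that the dual of the compact group $\hat G$ is $G$). Conversely, if $G$ is finite, then $\hat G$ is finite, $F^p_\lambda(\hat G)$ is finite-dimensional, and $\Gamma_p$ is already a bijection onto $C_0(\hat G) = C(\hat G)$; tensoring with $\mathrm{id}_A$ then yields an isomorphism $\varphi_2$. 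I would note that even when finite-dimensional, $\Gamma_p$ need not be isometric for $p \neq 2$, which is why only ``isomorphism'' and not ``isometric isomorphism'' is claimed here.

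For part (ii), when $p=2$ the Gelfand transformation $\Gamma_2 \colon F^2_\lambda(\hat G) \to C_0(G)$ is the Fourier transform realized on the group $C^*$-algebra, hence an isometric $*$-isomorphism by Plancherel / the classical Gelfand–Naimark picture for abelian $C^*$-algebras. Since unique $L^2$ operator matrix norms is automatic for $C^*$-algebras and the spatial $L^2$ tensor norm is the unique $C^*$ tensor norm, $\varphi_2 = \Gamma_2 \otimes \mathrm{id}_A$ is an isometric isomorphism onto $C_0(G,A)$. Here I would lean on the fact that for $p=2$ everything reverts to the $C^*$-algebra Takai-duality framework, so the isometry is inherited directly.

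The main obstacle I anticipate is the first reduction step: justifying cleanly that $\varphi_2$ is genuinely $\Gamma_p \otimes \mathrm{id}_A$ as a map of completed crossed products, rather than merely on the dense subalgebra of elementary tensors $\varphi \otimes a$. This requires controlling the crossed-product/tensor norm on $F^p(\hat G, A, \beta)$ and matching it with the spatial tensor norm $F^p_\lambda(\hat G)\otimes_p A$, which is exactly where the unique-matrix-norm hypothesis and Lemma \ref{isometric tensor} must be invoked carefully. The second delicate point is the slicing argument for surjectivity: I must ensure that a functional on $A$ can be chosen so that the induced slice map $C_0(G,A)\to C_0(G)$ intertwines with $\Gamma_p\otimes\mathrm{id}_A$ and detects the failure of surjectivity of $\Gamma_p$, so that the infinite-$G$ obstruction from Theorem \ref{Gelfand}(ii) transfers faithfully to $\varphi_2$.
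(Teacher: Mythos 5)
Your proposal is correct and follows essentially the same route as the paper: both identify $\varphi_2$, via the amenability of $\hat G$ and the triviality of $\beta$, with $\Gamma_p\otimes\mathrm{id}$ on $F^p_\lambda(\hat G)\otimes_p\pi_0(A)$ and then reduce everything to Theorem \ref{Gelfand} (with Pisier's complete isometry of $\Gamma_2$ handling $p=2$). The one point you flag but leave open --- boundedness of the slice map on the spatial $L^p$ tensor product needed to transfer surjectivity of $\Gamma_p\otimes\mathrm{id}$ to surjectivity of $\Gamma_p$ --- is resolved by using a vector functional $a\mapsto\langle \pi_0(a)\eta,\eta^*\rangle$ with $\eta\in L^p(X,\nu)$, $\eta^*\in L^{p'}(X,\nu)$ and $\langle\eta,\eta^*\rangle=1$, which is automatically contractive on $\mathcal{B}(L^p(\hat G\times X))$; note also that the paper's proof additionally establishes injectivity of $\varphi_2$ for all $p$ and infinite $G$ (via the identity $(\Gamma_p\otimes\mathrm{id})(F)(\widehat{\xi})=\widehat{F(\xi)}$), which is not needed for the lemma as stated but is used later in Proposition \ref{injective}.
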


\begin{proof}
We first prove two key claims.

{\bf Claim 1.} There exists a nondegenerate $\sigma$-finite isometric representation $\pi_0$ of $A$ on some $L^p$ space such that $F^{p}(\hat{G},A,\beta)$ is isometrically isomorphic to $F^{p}_{\lambda}(\hat{G})\otimes_{p} \pi_0(A)$.

Since $\hat{G}$ is amenable, it follows that $F^{p}(\hat{G},A,\beta)$ is isometrically isomorphic to $F^{p}_{\lambda}(\hat{G},A,\beta)$ (see \cite[Theorem 7.1]{Phillips look like}). By \cite[Lemma 3.19]{N. C. Phillips Lp}, there exists a a nondegenerate $\sigma$-finite isometric representation $\pi_0:A\rightarrow  \mathcal{B}(L^p(X,\nu))$ such that, with $(\pi,\lambda_{p}^{L^p(X,\nu)})$ being its associated regular covariant representation, the representation $\pi\rtimes \lambda_{p}^{L^p(X,\nu)}$ is nondegenerate and isometric on $F^p_\lambda(\hat{G},A,\beta)$. Since $\beta$ is the trivial action of $\hat{G}$ on $A$, it follows that $F^p_\lambda(\hat{G},A,\beta)$ is isometrically isomorphic to $F^p_\lambda(\hat{G})\otimes_p \pi_0(A)$. We denote by $\rho$ the isometric isomorphism from $F^p(\hat{G},A,\beta)$ onto $F^p_\lambda(\hat{G})\otimes_p \pi_0(A)$.


{\bf Claim 2.} $\varphi_2: F^{p}(\hat{G},A,\beta)\rightarrow C_{0}(G,A)$ is an injective homomorphism.

Note that the norm on $C_0(G)\otimes_p A\cong C_0(G,A)$ is independent of the choice of isometric representation of $A$, then we let $\mathrm{Id}_{C_0(G)}\otimes \pi_0$ be the isometric isomorphism from  $C_0(G)\otimes_p A$ onto $C_0(G)\otimes_p \pi_0(A)$ by sending $f\otimes a$ to $f\otimes \pi_0(a)$ for all $f\in C_0(G)$ and $a\in A$.
Hence, the homomorphism $(\mathrm{Id}_{C_0(G)}\otimes \pi_0)\circ\varphi_2\circ\rho^{-1}:F^{p}_{\lambda}(\hat{G})\otimes_{p} \pi_0(A)\rightarrow C_{0}(G)\otimes_{p}\pi_0(A)$ sends $\varphi\otimes \pi_0(a)$ to $\Gamma_p(\varphi)\otimes \pi_0(a)$ for all $\varphi\in F^{p}_{\lambda}(\hat{G})$.
Hence $(\mathrm{Id}_{C_0(G)}\otimes \pi_0)\circ\varphi_2\circ\rho^{-1}=\Gamma_p\otimes\mathrm{id}_{\pi_0(A)}$. To show $\varphi_2$ is an injective homomorphism, it suffices to show that $\Gamma_p\otimes\mathrm{id}_{\pi_0(A)}$ is an injective homomorphism.

If $\xi\in L^{p}(\hat{G}\times X,\mu\times \nu)$, then $\xi(x)(\gamma):=\xi(\gamma,x)$ defines an element $\xi(x)\in L^{p}(\hat{G},\mu)$. Since $\hat{G}$ is a compact Abelian group and $\mu(\hat{G})=1$, by \cite[Exercise 5 (a)]{Rudin}, it follows that $\xi(x)\in L^1(G,\mu)\subseteq F^{p}_\lambda(\hat{G})$.
Let $\widehat{\xi}(t,x):=\Gamma_{p}(\xi(x))(t)$. If $F=\sum_{i=1}^{m}f_{i}\otimes \pi_0(a_{i})$ and $\xi=\sum_{j=1}^{n}g_{j}\otimes h_{j}$, where $f_{i}\in C(\hat{G})$, $a_{i}\in A$, $g_{j}\in L^p(\hat{G},\mu)$ and $h_{j}\in L^p(X,\nu)$,
then a direct computation shows that
$$\begin{aligned} \Gamma_{p}\otimes\mathrm{id}_{\pi_0(A)}(F)(\widehat{\xi}) &=
(\sum_{i=1}^{m}\Gamma_{p}(f_{i})\otimes \pi_0(a_{i}))(\sum_{j=1}^{n}\Gamma_{p}(g_{j})\otimes h_{j})\\
 &=\sum_{i,j}\Gamma_{p}(f_{i}*g_{j})\otimes \pi_0(a_{i})h_{j}
 =\widehat{F(\xi)}. \end{aligned}$$
Note that the elements of the form $\sum_{i=1}^{m}f_{i}\otimes \pi_0(a_{i})$ are dense in $F^p_\lambda(\hat{G})\otimes_p \pi_0(A)$ and the elements of form $\sum_{j=1}^{n}g_{j}\otimes h_{j}$ are dense in $L^p(\hat{G}\times X,\mu\times\nu)$. Since $\Gamma_{p}\otimes\mathrm{id}_{\pi_0(A)}$ is contractive, it follows that
\begin{align}\label{Eq:tenseor}
   (\Gamma_{p}\otimes\mathrm{id}_{A})(F)(\widehat{\xi})=\widehat{F(\xi)}  \ \ \ \ \ \textup{for~all}~\ \ F\in F^{p}_\lambda(\hat{G})\otimes_{p}A, \ ~ \xi\in L^p(\hat{G}\times X,\mu\times\nu).
 \end{align}

For a proof of Claim 2 by contradiction, we assume that $\Gamma_{p}\otimes\mathrm{id}_{\pi_0(A)}(F)=0$ for some nonzero $F\in F^{p}_\lambda(\hat{G})\otimes_{p}\pi_0(A)$. Then there exists $\xi\in L^p(\hat{G}\times X)$ such that $F(\xi)\neq 0$.
By (\ref{Eq:tenseor}), we have $\Gamma_{p}\otimes \mathrm{id}_{\pi_0(A)} (F)(\widehat{\xi})=0$. This implies $\widehat{F(\xi)}=0$.
So, for $x\in X$ and $t\in G$, we have $\widehat{F(\xi)}(t,x)=\Gamma_p(F(\xi)(x))(t)=0$. Then $\Gamma_p(F(\xi)(x))=0$. Since $\Gamma_p$ is injective, we have $F(\xi)=0$, which is a contradiction. This proves Claim 2.

If $G$ is a locally compact Abelian group, then $G$ is finite if and only if its dual group $\hat{G}$ is finite (see \cite[Theorem 9.15]{Conway}). By Theorem \ref{Gelfand}, the Gelfand transformation $\Gamma_p:F^p_\lambda(\hat{G})\rightarrow C_0(G)$ is an isomorphism if and only if either $p=2$ or $G$ is finite.
If $p\in[1,\infty)\setminus\{2\}$, then $\varphi_2$ is an isomorphism if and only if $G$ is finite.

Now assume that $p=2$. Since $\Gamma_{2}:F^{2}_\lambda(\hat{G})\rightarrow C_{0}(G)$ is an isometric isomorphism, by \cite[Corollary 1.12]{Piser}, $\Gamma_{2}\otimes\mathrm{id}_{\pi_0(A)}:F^2_\lambda(\hat{G})\otimes_2 \pi_0(A)\rightarrow C_0(G)\otimes_2 \pi_0(A)$ is an isometric isomorphism.
So $\varphi_2$ is an isometric isomorphism.
\end{proof}



The following proposition will be useful in the proof for the injectivity of $\Phi_2$.

\begin{prop}[{\cite[Propositions 4.8 and 4.9]{N. C. Phillips Lp}}]\label{expection}
Let $p\in [1,\infty)$ and $G$ be a countable discrete group. Then there exist contractive linear maps $\{E_{s}:  s\in G\}$ from $F^{p}_{\lambda}(G,A,\alpha)$ to $A$ satisfying
\begin{enumerate}
\item[(i)]  for any $a\in A, s,t\in G$, $$E_s(a\delta_t)=\begin{cases}
a,& s=t,\\
0,& s\ne t,
\end{cases}$$ where $\delta_{s}$ is the function in $C_c(G)$ which is $1$ at $s$ and zero elsewhere;
\item[(ii)] for any $f\in F^{p}_{\lambda}(G,A,\alpha)$, $f=0$ if and only if $E_{s}(f)=0$ for all $s\in G$.
\end{enumerate}
\end{prop}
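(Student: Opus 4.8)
The plan is to realise each $E_s$ as the operation of reading off a single matrix coefficient of a faithful regular representation and then undoing the twist coming from $\alpha$. First I would fix such a representation: by \cite[Lemma 3.19]{N. C. Phillips Lp} (or, under the standing hypotheses of this paper, by Lemma \ref{does not depend}) there is a nondegenerate $\sigma$-finite isometric representation $\pi_0:A\rightarrow \mathcal{B}(E_0)$ on an $L^p$ space $E_0$ whose associated regular covariant representation $(\pi,\lambda_p^{E_0})$ on $l^p(G)\otimes_p E_0\cong l^p(G,E_0)$ has integrated form $\pi\rtimes\lambda_p^{E_0}$ isometric on $F^p_\lambda(G,A,\alpha)$. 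Writing $T=\pi\rtimes\lambda_p^{E_0}(f)$ for $f=\sum_t f(t)\delta_t\in C_c(G,A,\alpha)$, a direct computation from the defining formulas for $\pi$ and $\lambda_p^{E_0}$ gives, for $\xi\in l^p(G,E_0)$,
$$T\xi(u)=\sum_{v\in G}\pi_0\big(\alpha_{u^{-1}}(f(uv^{-1}))\big)\xi(v),\qquad u\in G.$$
Introducing the contractive coordinate inclusion $J_r\eta:=\delta_r\otimes\eta$ and coordinate projection $Q_u\xi:=\xi(u)$, this says that the $(u,r)$ matrix entry of $T$ is $Q_uTJ_r=\pi_0(\alpha_{u^{-1}}(f(ur^{-1})))$.

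Next I would define $E_s$ by extracting the $(s,e)$-coefficient and undoing the cocycle, namely
$$E_s(f):=\alpha_s\Big(\pi_0^{-1}\big(Q_s\,(\pi\rtimes\lambda_p^{E_0}(f))\,J_e\big)\Big).$$
Since $J_e$ and $Q_s$ are norm-one maps of the $L^p$ direct sum, $\pi_0$ is isometric onto the closed subalgebra $\pi_0(A)$, and each $\alpha_s$ is isometric, for $f\in C_c(G,A,\alpha)$ one gets $Q_sTJ_e=\pi_0(\alpha_{s^{-1}}(f(s)))$, whence $E_s(f)=f(s)$ and
$$\|E_s(f)\|=\|\alpha_{s^{-1}}(f(s))\|=\|Q_sTJ_e\|\le \|T\|\le \|f\|_{F^p_\lambda(G,A,\alpha)}.$$
Thus each $E_s$ is contractive for the reduced norm, and because the image of $C_c(G,A,\alpha)$ under $f\mapsto Q_s(\pi\rtimes\lambda_p^{E_0}(f))J_e$ lies in the closed set $\pi_0(A)$, continuity extends $E_s$ to a contractive linear map $F^p_\lambda(G,A,\alpha)\rightarrow A$. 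Property (i) is then immediate from the formula $E_s(f)=f(s)$ on $C_c(G,A,\alpha)$.

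For property (ii) one direction is trivial, so suppose $E_s(f)=0$ for all $s\in G$. The identity $Q_u(\pi\rtimes\lambda_p^{E_0}(f))J_r=\pi_0(\alpha_{u^{-1}}(E_{ur^{-1}}(f)))$ holds on $C_c(G,A,\alpha)$ and both sides are continuous in $f$, so it persists on $F^p_\lambda(G,A,\alpha)$; hence every matrix entry of $T=\pi\rtimes\lambda_p^{E_0}(f)$ vanishes. Testing against a finitely supported $\xi=\sum_r\delta_r\otimes\xi_r$ gives $Q_u(T\xi)=\sum_r (Q_uTJ_r)\xi_r=0$ for every $u$, so $T\xi=0$; since such $\xi$ are dense and $T$ is bounded, $T=0$, and faithfulness of $\pi\rtimes\lambda_p^{E_0}$ forces $f=0$.

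I expect the main obstacle to be twofold. The first delicate point is verifying that the single-coefficient extraction $T\mapsto Q_uTJ_r$ is genuinely contractive on $\mathcal{B}(l^p(G,E_0))$; this rests on the coordinate inclusions and projections of the $L^p$ direct sum being norm-one, which is where the $L^p$ (rather than Hilbert space) geometry enters and must be handled with care. The second, and more essential, is securing a \emph{faithful} regular representation so that vanishing of all coefficients actually forces $f=0$ in the completion — this is precisely the input provided by \cite[Lemma 3.19]{N. C. Phillips Lp}, respectively Lemma \ref{does not depend}, and without it one only recovers that $f$ is annihilated by one particular regular representation rather than that it is zero.
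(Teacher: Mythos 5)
Your proof is correct and follows essentially the standard coefficient-map construction that the paper itself does not reprove but simply cites from Phillips (Propositions 4.8 and 4.9): extracting the $(s,e)$ matrix entry $Q_s(\pi\rtimes\lambda_p^{E_0}(f))J_e$ of a faithful isometric regular representation and undoing the $\alpha$-twist is exactly the expected argument. The two delicate points you flag — that the coordinate maps $J_r$ and $Q_u$ of the $L^p$ direct sum are norm one, and that an isometric (hence injective) integrated form from \cite[Lemma 3.19]{N. C. Phillips Lp} is needed so that vanishing of all matrix entries forces $f=0$ — are handled correctly, as is the closedness of $\pi_0(A)$ needed to extend $E_s$ to the completion.
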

%
%

\begin{proof}[Proof of Proposition \ref{injective}]

We first prove two key claims.

{\bf Claim 1.} $\Phi_{2}=\varphi_2\rtimes \mathrm{id}$.

 The Gelfand transformation $\Gamma_p:F^p_\lambda(\hat{G})\rightarrow C_0(G)$ sends $\varphi\in L^1(\hat{G})$ to its Fourier transformation  $\hat{\varphi}\in C_0(G)$ which is given by $\hat{\varphi}(t)=\int_{\hat{G}} \varphi(\gamma)\overline{\gamma(t)}d\mu(\gamma)$.
Hence the homomorphism $\varphi_2:F^{p}(\hat{G},A,\beta)\rightarrow C_{0}(G,A)$ satisfies
$$\varphi_2(\psi)(t)=\int_{\hat{G}}\psi(\gamma)\overline{\gamma(t)}d\mu(\gamma)$$ for all $\psi\in C(\hat{G},A)$.
Since
$$\begin{aligned} \varphi_2((\hat{\beta}\otimes\alpha)_{s}(\psi))(t) &= \int_{\hat{G}}(\hat{\beta}\otimes\alpha)_{s}(\psi)(\gamma)\overline{\gamma(t)}d\mu(\gamma) \\
 &=\int_{\hat{G}}\gamma(s)\alpha_{s}(\psi(\gamma))\overline{\gamma(t)}d\mu(\gamma)\\
 &=\alpha_{s}(\int_{\hat{G}}\psi(\gamma)\overline{\gamma(s^{-1}t)}d\mu(\gamma))\\
 &=(\mathrm{lt}\otimes\alpha)_{s}(\varphi_2(\psi))(t), \end{aligned}$$
it follows that $\varphi_2$ is equivariant.


Let $L$ be a nondegenerate $\sigma$-finite contractive representation of $F^{p}(G,C_{0}(G,A),\rm{lt}\otimes\alpha)$. Then, by Theorem \ref{correspondence}, there exists a nondegenerate $\sigma$-finite contractive covariant representation $(\pi,v)$ of $(G,C_{0}(G,A),\rm{lt}\otimes\alpha)$ such that $L=\pi\rtimes v$.
Since $\varphi_2 (F^{p}(\hat{G},A,\beta))$ is dense in $C_{0}(G,A)$, it follows that $(\pi\circ\varphi_2,v)$ is a nondegenerate $\sigma$-finite contractive covariant representation of $(G,F^{p}(\hat{G},A,\beta),\hat{\beta}\otimes\alpha)$.
By the universal property of $F^{p}(G,F^{p}(\hat{G},A,\beta),\hat{\beta}\otimes\alpha)$ (see \cite[Theorem 3.6]{N. C. Phillips Lp}), there exists a contractive homomorphism $\varphi_2\rtimes \mathrm{id}: F^{p}(G,F^{p}(\hat{G},A,\beta),\hat{\beta}\otimes\alpha)\rightarrow
F^{p}(G,C_{0}(G,A),\rm{lt}\otimes\alpha).$
For each $F\in C_{c}(G\times \hat{G},A)$, we have $\lambda_{F}(s)\in C(\hat{G},A,\beta)\subseteq F^{p}(\hat{G},A,\beta)$ and $(\varphi_2\rtimes \mathrm{id})(F)(s)=\varphi_2(\lambda_F(s))$.
Hence
$$\begin{aligned} (\varphi_2\rtimes \mathrm{id})(F)(s,t) =
\int_{\hat{G}}\lambda_{F}(s)(\gamma)\overline{\gamma(t)}d\mu(\gamma)
 =\int_{\hat{G}}F(s,\gamma)\overline{\gamma(t)}d\mu(\gamma). \end{aligned}$$
This implies that $\Phi_{2}=\varphi_2\rtimes \mathrm{id}$.

{\bf Claim 2.} $\Phi_{2}$ is an injective homomorphism.

Since $G$ is amenable,
by \cite[Theorem 7.1]{Phillips look like},
 there are two isometric isomorphisms $\pi_1: F^{p}(G,F^{p}(\hat{G},A,\beta),\hat{\beta}\otimes\alpha)\rightarrow F^{p}_\lambda(G,F^{p}(\hat{G},A,\beta),\hat{\beta}\otimes\alpha)$ and $\pi_2: F^{p}(G,C_{0}(G,A),\mathrm{lt}\otimes\alpha)\rightarrow F^{p}_\lambda(G,C_{0}(G,A),\mathrm{lt}\otimes\alpha)$.

For each $s\in G$, let $E_s$ be the linear map as in Proposition \ref{expection}. Then we have the following commutative diagram
\[
\xymatrix{
 & F^{p}(G,F^{p}(\hat{G},A,\beta),\hat{\beta}\otimes\alpha) \ar[r]^{\Phi_2}  \ar[d]^{\pi_1}   &F^{p}(G,C_{0}(G,A),\mathrm{lt}\otimes\alpha) \ar[d]^{\pi_2}\\
  &  F^{p}_{\lambda}(G,F^{p}(\hat{G},A,\beta),\hat{\beta}\otimes\alpha) \ar[d]^{E_s}   &F^{p}_\lambda(G,C_{0}(G,A),\mathrm{lt}\otimes\alpha) \ar[d]^{E_s}  \\
&  F^{p}(\hat{G},A,\beta) \ar[r]^{\varphi_2}   & C_{0}(G,A).
}
\]
Choose $F\in F^{p}(G,F^{p}(\hat{G},A,\beta),\hat{\beta}\otimes\alpha)$ such that $\Phi_2(F)=0$.
Then we have $\varphi_2\circ E_s(\pi_1(F))=0$. Since $\varphi_2$ is injective, we get that $E_s(\pi_1(F))=0$ for all $s\in G$. By Proposition \ref{expection},
we have $\pi_1(F)=0$. This will imply that $F=0$. This proves Claim 2.

If $p\in [1,\infty)\setminus\{2\}$, then, by Lemma \ref{extention of Gelfand transformation}, $\varphi_2$ is an isomorphism if and only if $G$ is finite. Hence $\Phi_{2}$ is an isomorphism if and only if $G$ is finite.

Now assume that $p=2$. We have proved that $\Phi_2$ is a contractive homomorphism. Since $\varphi_2$ is an equivariant  isometric isomorphism, it follows that $\varphi_2^{-1}$ is also an equivariant isometric isomorphism. By the universal property of $F^{2}(G,C_{0}(G,A),\rm{lt}\otimes\alpha)$ (see \cite[Theorem 3.6]{N. C. Phillips Lp}), there exists a contractive homomorphism  $\varphi_2^{-1}\rtimes \mathrm{id}: F^{2}(G,C_{0}(G,A),\mathrm{lt}\otimes\alpha)\rightarrow F^{2}(G,
F^{2}(\hat{G},A,\beta),\hat{\beta}\otimes\alpha)$.
Then, by the Fourier inversion formula and a direct calculation similar to that in the proof of Claim 1, one can check that
$\Phi_2$ is the inverse of $\varphi_2^{-1}\rtimes \mathrm{id}$. Hence $\Phi_{2}$ is an isometric isomorphism.
\end{proof}

\subsection{The isometry property of $\Phi_3$}
$\\$
$\\$
The aim of this subsection is to prove $\Phi_3$ is an isometric isomorphism and $\Phi_3\circ\Phi_2\circ\Phi_1$ is equivariant for the
double dual action $\hat{\hat{\alpha}}$ of $G$ on $F^{p}(\hat{G},F^{p}(G,A,\alpha),\hat{\alpha})$ and
the action $(\mathrm{rt}\otimes \alpha)\otimes \mathrm{id}$ of $G$ on $F^{p}(G,C_{0}(G,A),\mathrm{lt}\otimes \mathrm{id})$.

\begin{lem}\label{change for trival action}
Let $\Phi_3:F^{p}(G,C_{0}(G,A),\mathrm{lt}\otimes\alpha)\rightarrow F^{p}(G,C_{0}(G,A),\mathrm{lt}\otimes \mathrm{id})$ be defined in (\ref{Phi3}) under the assumption of Theorem \ref{T:main}. Then $\Phi_3$ is an isometric isomorphism.
\end{lem}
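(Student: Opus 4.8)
The plan is to recognize that $\Phi_3$ is simply the isomorphism of crossed products induced by an equivariant isomorphism of the coefficient algebras, and then to read off the isometry from the universal property of the full $L^p$ operator crossed product.

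First I would introduce the pointwise twist $\Theta:C_{0}(G,A)\to C_{0}(G,A)$ given by $\Theta(f)(t):=\alpha_{t}^{-1}(f(t))$. Since each $\alpha_t$ is an isometric automorphism of $A$ and $G$ is discrete, $\Theta$ is an isometric automorphism of $C_{0}(G,A)$, with inverse $\Theta^{-1}(f)(t)=\alpha_t(f(t))$. The key computation is that $\Theta$ intertwines the two actions: for $s\in G$,
$$\Theta\big((\mathrm{lt}\otimes\alpha)_s(f)\big)(t)=\alpha_t^{-1}\alpha_s\big(f(s^{-1}t)\big)=\alpha_{t^{-1}s}\big(f(s^{-1}t)\big)=\big((\mathrm{lt}\otimes\mathrm{id})_s(\Theta f)\big)(t),$$
so that $\Theta\circ(\mathrm{lt}\otimes\alpha)_s=(\mathrm{lt}\otimes\mathrm{id})_s\circ\Theta$ for all $s\in G$. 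A direct evaluation then shows $\Phi_3(F)(s)=\Theta(F(s))$ for every $F\in C_{c}(G,C_{0}(G,A))$, i.e. $\Phi_3$ is exactly the map $\Theta\rtimes\mathrm{id}$ induced on the convolution algebras; in particular $\Phi_3$ is an algebra bijection from $C_{c}(G\times G,A)$ onto itself, with inverse induced by $\Theta^{-1}$.

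To upgrade this to an isometry I would invoke the universal property (\cite[Theorem 3.6]{N. C. Phillips Lp}). Given any nondegenerate $\sigma$-finite contractive covariant representation $(\rho,w)$ of the target system $(G,C_{0}(G,A),\mathrm{lt}\otimes\mathrm{id})$ on an $L^p$ space, the equivariance of $\Theta$ shows that $(\rho\circ\Theta,w)$ is a nondegenerate $\sigma$-finite contractive covariant representation of the source system $(G,C_{0}(G,A),\mathrm{lt}\otimes\alpha)$ on the \emph{same} space (nondegeneracy and $\sigma$-finiteness are preserved because $\Theta$ is a surjective isometric automorphism and the representation space is unchanged). Evaluating integrated forms on $C_{c}(G,C_{0}(G,A))$ gives $(\rho\circ\Theta)\rtimes w=(\rho\rtimes w)\circ\Phi_3$, whence
$$\big\|(\rho\rtimes w)(\Phi_3(F))\big\|=\big\|((\rho\circ\Theta)\rtimes w)(F)\big\|\le\|F\|.$$
Taking the supremum over all such $(\rho,w)$ yields $\|\Phi_3(F)\|\le\|F\|$. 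Running the identical argument with $\Theta^{-1}$ in place of $\Theta$ gives $\|\Phi_3^{-1}(G)\|\le\|G\|$, i.e. the reverse inequality. Hence $\Phi_3$ is isometric on the dense subalgebra $C_{c}(G\times G,A)$ and therefore extends to an isometric isomorphism of the two full crossed products.

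I expect the only genuine content to be the equivariance identity for $\Theta$ and the bookkeeping that $(\rho\circ\Theta)\rtimes w=(\rho\rtimes w)\circ\Phi_3$; these are short but must be carried out carefully, since the coefficient action $\mathrm{lt}\otimes\alpha$ couples the left-translation variable $t$ with the $\alpha$-twist, and it is precisely the pointwise factor $\alpha_t^{-1}$ in $\Theta$ that absorbs the $\alpha$ and converts $\mathrm{lt}\otimes\alpha$ into $\mathrm{lt}\otimes\mathrm{id}$. No analytic estimate is needed: once the coefficient-level equivariant isomorphism $\Theta$ is in place, the isometry of $\Phi_3$ is delivered for free by the universal property, and the argument is entirely parallel for $p\in[1,\infty)$.
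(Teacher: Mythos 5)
Your proposal is correct and follows essentially the same route as the paper: the paper defines the same pointwise twist $\varphi_3(f)(t)=\alpha_t^{-1}(f(t))$, verifies the same equivariance identity, and identifies $\Phi_3=\varphi_3\rtimes\mathrm{id}$, deducing the isometry from the universal property. Your write-up merely spells out the two-sided estimate that the paper leaves as ``a similar argument'' to its Proposition on $\Phi_2$.
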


\begin{proof}
We can define an isometric isomorphism $\varphi_{3}:C_{0}(G,A)\rightarrow C_{0}(G,A)$ by $\varphi_{3}(f)(t):=\alpha^{-1}_{t}(f(t)).$
Then
$$\begin{aligned} \varphi_{3}((\mathrm{lt}\otimes \alpha)_{s}(f))(t))&=
\alpha^{-1}_{t}(\alpha_{s}(f(s^{-1}t)))\\
&=\alpha^{-1}_{s^{-1}t}(f(s^{-1}t))\\
&=(\mathrm{lt}\otimes \mathrm{id})_{s}(\varphi_{3}(f))(t). \end{aligned}$$
So $\varphi_{3}$ is equivariant. By the universal property of $F^{p}(G,C_{0}(G,A),\mathrm{lt}\otimes\alpha)$ and $F^{p}(G,C_{0}(G,A),\mathrm{lt}\otimes \mathrm{id})$ (see \cite[Theorem 3.6]{N. C. Phillips Lp}), using a similar argument in Lemma \ref{injective}, one can show that $\Phi_{3}=\varphi_{3}\rtimes \mathrm{id}$ is an isometric isomorphism.
\end{proof}

For the reader's convenience, we recall the double dual action $\hat{\hat{\alpha}}$ of $G$ on $F^{p}(\hat{G},F^{p}(G,A,\alpha),\hat{\alpha})$ and the action $(\mathrm{rt}\otimes \alpha)\otimes \mathrm{id}$ of $G$ on $F^{p}(G,C_{0}(G,A),\mathrm{lt}\otimes \mathrm{id})$. The double dual action $\hat{\hat{\alpha}}$ of $G$ on $C_{c}(\hat{G}\times G,A)\subseteq F^{p}(\hat{G},F^{p}(G,A,\alpha),\hat{\alpha})$ is given by $\hat{\hat{\alpha}}_{t}(F)(\gamma, s):=\overline{\gamma(t)} F(\gamma, s)$ for all $t\in G$. Let $\mathrm{rt}$ denote the right-translation of $G$ on $C_{0}(G)$, that is,
$(\mathrm{rt})_{t}f(s):=f(st)$ for all $f\in C_{0}(G)$. Then we get an $L^{p}$ operator algebra dynamical system
$\mathrm{rt}\otimes \alpha:G\rightarrow \mathrm{Aut}(C_{0}(G,A))$, where $(\mathrm{rt}\otimes \alpha)_{t}f(s):=\alpha_{t}(f(st))$ for all $f\in C_{0}(G,A)$ and $t\in G$.
Thus we get an  $L^{p}$ operator algebra dynamical system
$(\mathrm{rt}\otimes \alpha)\otimes \mathrm{id}:G\rightarrow \mathrm{Aut}(F^{p}(G,C_{0}(G,A),\mathrm{lt}\otimes \mathrm{id})$, where $((\mathrm{rt}\otimes \alpha)\otimes \mathrm{id})_{r}F(s,t):=\alpha_{r}(F(s,tr))$ for all $F\in C_{c}(G\times G,A)\subseteq F^{p}(G,C_{0}(G,A),\mathrm{lt}\otimes \mathrm{id})$ and $r\in G$.

\begin{lem}\label{1-3}
The homomorphism
$$\Phi_{3}\circ \Phi_{2}\circ\Phi_{1}: F^{p}(\hat{G},F^{p}(G,A,\alpha),\hat{\alpha})\rightarrow  F^{p}(G,C_{0}(G,A),\mathrm{lt}\otimes \mathrm{id})$$ is equivariant for the
double dual action $\hat{\hat{\alpha}}$ of $G$ on $F^{p}(\hat{G},F^{p}(G,A,\alpha),\hat{\alpha})$ and
the action $(\mathrm{rt}\otimes \alpha)\otimes \mathrm{id}$ of $G$ on $F^{p}(G,C_{0}(G,A),\mathrm{lt}\otimes \mathrm{id})$.
\end{lem}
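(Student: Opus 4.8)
The plan is to reduce everything to the dense subalgebra $C_c(\hat{G}\times G,A)$ of $F^p(\hat{G},F^p(G,A,\alpha),\hat{\alpha})$ supplied by Lemma \ref{dense}(i). Since $\Phi_1$, $\Phi_2$, $\Phi_3$ are bounded homomorphisms (Proposition \ref{change}, Proposition \ref{injective}, Lemma \ref{change for trival action}) and both the double dual action $\hat{\hat{\alpha}}$ and the action $(\mathrm{rt}\otimes\alpha)\otimes\mathrm{id}$ act by isometric automorphisms, it suffices to verify the identity $(\Phi_3\circ\Phi_2\circ\Phi_1)\circ\hat{\hat{\alpha}}_r=((\mathrm{rt}\otimes\alpha)\otimes\mathrm{id})_r\circ(\Phi_3\circ\Phi_2\circ\Phi_1)$ on a single element $F\in C_c(\hat{G}\times G,A)$ for each $r\in G$, and then extend by continuity and density.

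First I would assemble an explicit formula for the composite. Feeding the defining formulas (\ref{Phi1}), (\ref{Phi2}), (\ref{Phi3}) into one another and collapsing the characters via $\gamma(s)\overline{\gamma(t)}=\gamma(st^{-1})$ gives, for $s,t\in G$,
$$(\Phi_3\circ\Phi_2\circ\Phi_1)(F)(s,t)=\alpha_t^{-1}\Big(\int_{\hat{G}}F(\gamma,s)\,\gamma(st^{-1})\,d\mu(\gamma)\Big).$$
This single closed form is what makes the subsequent bookkeeping manageable.

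With this in hand, the verification is a direct comparison of the two sides on $F$. On the left, applying $\hat{\hat{\alpha}}_r$ first inserts the factor $\overline{\gamma(r)}$ inside the integral, which combines with $\gamma(st^{-1})$ to give $\gamma(st^{-1}r^{-1})$. On the right, applying the composite first and then $((\mathrm{rt}\otimes\alpha)\otimes\mathrm{id})_r$---which sends $H(s,t)$ to $\alpha_r(H(s,tr))$---produces the operator $\alpha_r\alpha_{(tr)^{-1}}$ in front of the integral $\int_{\hat{G}}F(\gamma,s)\gamma(s(tr)^{-1})\,d\mu(\gamma)$. The two sides are then reconciled using exactly two facts: that $\alpha$ is a homomorphism, so $\alpha_r\alpha_{(tr)^{-1}}=\alpha_{t^{-1}}=\alpha_t^{-1}$, and that $G$ is Abelian, so the character arguments agree, $st^{-1}r^{-1}=s(tr)^{-1}$. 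Both expressions therefore equal $\alpha_t^{-1}\big(\int_{\hat{G}}F(\gamma,s)\gamma(st^{-1}r^{-1})d\mu(\gamma)\big)$.

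The computation is essentially mechanical once the closed form for the composite is established; there is no serious analytic obstacle, since density and continuity handle the passage from $C_c(\hat{G}\times G,A)$ to the completion. The only place demanding care is the bookkeeping of the three character factors $\overline{\gamma(r)}$, $\gamma(s)$, $\overline{\gamma(t)}$ together with the composition of the automorphisms $\alpha$; the crucial cancellations genuinely rely on the commutativity of $G$, so this is where I would be most careful to ensure the arguments of the characters and the subscripts of $\alpha$ line up correctly.
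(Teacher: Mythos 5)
Your proposal is correct and follows essentially the same route as the paper: both derive the closed form $(\Phi_3\circ\Phi_2\circ\Phi_1)(F)(s,t)=\alpha_t^{-1}\bigl(\int_{\hat{G}}F(\gamma,s)\overline{\gamma(s^{-1}t)}\,d\mu(\gamma)\bigr)$ on the dense subalgebra $C_c(\hat{G}\times G,A)$ and then check equivariance by absorbing $\overline{\gamma(r)}$ into the character and using $\alpha_r\alpha_{(tr)^{-1}}=\alpha_{t^{-1}}$. The only cosmetic difference is that you pull $\alpha_t^{-1}$ outside the integral and make the density-and-continuity extension explicit, which the paper leaves implicit.
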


\begin{proof}
For $F\in C_{c}(\hat{G}\times G,A)$ and $s,t\in G$, we have
$$\begin{aligned} \Phi_{3}\circ \Phi_{2}\circ \Phi_{1}(F)(s,t)&= \alpha_{t}^{-1}(\Phi_{2}\circ \Phi_{1}(F)(s,t))\\
&=\alpha_{t}^{-1}(\int_{\hat{G}}\Phi_{1}(F)(s,\gamma)\overline{\gamma(t)}d\mu(\gamma))\\
&=\int_{\hat{G}}\alpha_{t}^{-1}(F(\gamma,s))\overline{\gamma(s^{-1}t)}d\mu(\gamma). \end{aligned}$$
Then, for each $r\in G$, we have
$$\begin{aligned} \Phi_{3}\circ \Phi_{2}\circ \Phi_{1}(\hat{\hat{\alpha}}_{r}(F))(s,t)&= \int_{\hat{G}}\alpha_{t}^{-1}(\hat{\hat{\alpha}}_{r}(F)(\gamma,s))\overline{\gamma(s^{-1}t)}d\mu(\gamma)\\
&=\int_{\hat{G}}\alpha_{t}^{-1}(F(\gamma,s))\overline{\gamma(s^{-1}tr)}d\mu(\gamma)\\
&=\alpha_{r}(\int_{\hat{G}} \alpha_{tr}^{-1}(F(\gamma,s)\overline{\gamma(s^{-1}tr)}d\mu(\gamma))\\
&=\alpha_{r}(\Phi_{3}\circ \Phi_{2}\circ \Phi_{1}(F)(s,tr)). \end{aligned}$$
Hence $(\Phi_{3}\circ \Phi_{2}\circ \Phi_{1})\circ \hat{\hat{\alpha}}_{r}=((\mathrm{rt}\otimes\alpha)\otimes \mathrm{id})_{r}\circ (\Phi_{3}\circ \Phi_{2}\circ \Phi_{1})$.
\end{proof}


\subsection{The isometry property of $\Phi_4$}
$\\$
$\\$
The aim of this subsection is to prove the following.

\begin{prop}\label{equivariant for the compacts}
Let $\Phi_{4}:F^{p}(G,C_{0}(G,A),\mathrm{lt}\otimes \mathrm{id})\rightarrow \overline{M}_{G}^{p}\otimes_{p}A$ be defined in (\ref{Phi4}) under the assumption of Theorem \ref{T:main}. Then $\Phi_{4}$
is an isometric isomorphism which is equivariant for the action $(\mathrm{rt}\otimes \alpha)\otimes \mathrm{id}$ of $G$ on $F^{p}(G,C_{0}(G,A),\mathrm{lt}\otimes \mathrm{id})$ and the action $\mathrm{Ad}\rho\otimes\alpha$ of $G$ on $\overline{M}_{G}^{p}\otimes_{p} A$.
Moreover, the right-hand side equals $\mathcal{K}(l^{p}(G))\otimes_{p} A$ if and only if $p>1$ or $G$ is finite.
\end{prop}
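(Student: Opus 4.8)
The plan is to realize $\Phi_4$ as the integrated form of an explicit regular covariant representation and to reduce its isometry to a finite-matrix computation governed by the unique matrix norms of $A$, the case $A=\mathbb{C}$ being Gardella and Thiel's identification $F^p(G,C_0(G),\mathrm{lt})\cong\overline{M}_G^p$ in \cite[Theorem 4.3]{Gardella and Thiel}. First I would fix an isometric representation $A\subseteq\mathcal{B}(E)$ with $E$ a separable $\sigma$-finite $L^p$ space and identify $l^p(G,E)=l^p(G)\otimes_p E$. Substituting $u=s^{-1}t$ in (\ref{Phi4}) shows that, for $F\in C_c(G\times G,A)$,
\[
\Phi_4(F)=\sum_{t,u\in G}e_{t,u}\otimes F(tu^{-1},t),
\]
a finite linear combination of the matrix units $e_{t,u}\in M_G^p$ with coefficients in $A$. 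Hence $\Phi_4$ carries $C_c(G\times G,A)$ onto the dense subalgebra $M_G^p\otimes_{\mathrm{alg}}A$ of $\overline{M}_G^p\otimes_p A$ and is a homomorphism with dense range; moreover it is the integrated form of the covariant pair $(M,\lambda_p)$ on $l^p(G,E)$ given by $M(f)\xi(t)=f(t)\xi(t)$ and $\lambda_p(s)\xi(t)=\xi(s^{-1}t)$, namely the regular covariant representation attached to evaluation at $e$, so $\Phi_4\in\mathrm{RegRep}_p$ and $\|\Phi_4(F)\|\le\|F\|_{F^p_\lambda}$. From this matrix picture the equivariance statement (iv) is an algebraic identity: using $\rho(r)e_{t,u}\rho(r^{-1})=e_{tr^{-1},ur^{-1}}$ one checks on $C_c(G\times G,A)$ that $(\mathrm{Ad}\rho\otimes\alpha)_r\circ\Phi_4=\Phi_4\circ((\mathrm{rt}\otimes\alpha)\otimes\mathrm{id})_r$, and the same identity with $\alpha=\mathrm{id}$ will be reused below.

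The heart of the matter is the reverse inequality $\|\Phi_4(F)\|\ge\|F\|_{F^p_\lambda}$. Since $\Phi_4(F)$ already lies in $M_{\mathcal F}^p\otimes_{\mathrm{alg}}A$ for a large enough finite set $\mathcal F\subseteq G$, the hypothesis that $A$ has unique $L^p$ operator matrix norms together with Lemma \ref{isometric tensor} shows that $\|\Phi_4(F)\|$ does not depend on the chosen isometric representation of $A$; I will call this \emph{representation-independence}. By Claim~1 in the proof of Lemma \ref{does not depend}, $\|F\|_{F^p_\lambda}$ equals the supremum of $\|\sigma\rtimes\lambda_p^{E_0}(F)\|$ taken over nondegenerate $\sigma$-finite \emph{isometric} regular covariant representations of $(G,C_0(G,A),\mathrm{lt}\otimes\mathrm{id})$, so it suffices to evaluate each such norm.

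This evaluation is the main obstacle. Because $G$ is discrete one has $C_0(G,A)=c_0(G)\otimes_p A$, and the orthogonal idempotents $\delta_x\otimes 1_A$ split any nondegenerate isometric representation $\pi_0$ on $E_0$ as an $\ell^p$-direct sum $E_0=\bigoplus_{x\in G}E_0^x$ with $\pi_0=\bigoplus_{x}\pi_0^x$, where each $\pi_0^x\colon A\to\mathcal{B}(E_0^x)$ is an isometric representation of $A$; this step invokes the structure of $L^p$ representations of $C_0(G)$ from the cited literature and is where I expect the real work. A direct computation then shows that $\sigma\rtimes\lambda_p^{E_0}(F)$ is block diagonal along this decomposition and that, after conjugating by the norm-preserving right translation $\rho(x)$, its $x$-th block coincides with the image of $F$ under the map $\Phi_4$ formed from the representation $\pi_0^x$ of $A$. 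By representation-independence and the right-translation identity of the first paragraph, each block has norm $\|\Phi_4(F)\|$, whence $\|\sigma\rtimes\lambda_p^{E_0}(F)\|=\sup_{x}\|\mathrm{block}_x\|=\|\Phi_4(F)\|$ for every such $\pi_0$. Taking the supremum gives $\|F\|_{F^p_\lambda}=\|\Phi_4(F)\|$, and since $G$ is amenable \cite[Theorem 7.1]{Phillips look like} this also equals $\|F\|_{F^p(G,C_0(G,A),\mathrm{lt}\otimes\mathrm{id})}$. Thus $\Phi_4$ extends to an isometric isomorphism onto $\overline{M}_G^p\otimes_p A$, and (iv) follows by continuity.

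Finally, the dichotomy reduces to the known fact that $\overline{M}_G^p=\mathcal{K}(l^p(G))$ exactly when $p>1$ or $G$ is finite, while $\overline{M}_G^p\subsetneq\mathcal{K}(l^1(G))$ when $p=1$ and $G$ is infinite \cite[Example 1.6]{N. C. Phillips Lp}. When the two algebras agree the tensor products agree trivially. When $\overline{M}_G^p\subsetneq\mathcal{K}(l^p(G))$, I would fix $K\in\mathcal{K}(l^p(G))\setminus\overline{M}_G^p$ and use that $A$ is unital: a contractive slice map $\overline{M}_G^p\otimes_p A\to\overline{M}_G^p$ (evaluation of the second leg at a norm-one functional $\phi$ with $\phi(1)=1$) detects the first tensor factor and forces $K\otimes 1\notin\overline{M}_G^p\otimes_p A$. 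Hence the inclusion $\overline{M}_G^p\otimes_p A\subseteq\mathcal{K}(l^p(G))\otimes_p A$ is proper, yielding the stated equivalence that the right-hand side equals $\mathcal{K}(l^p(G))\otimes_p A$ if and only if $p>1$ or $G$ is finite.
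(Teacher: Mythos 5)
Your proposal is correct in outline but takes a genuinely different route from the paper at the decisive step, namely the lower bound $\|\Phi_4(F)\|\ge\|F\|_{F^p_\lambda}$. The paper never decomposes an arbitrary isometric representation of $C_0(G,A)$: it first proves that $C_0(G,A)$ itself has unique $L^p$ operator matrix norms (Lemma \ref{C0}), so that Lemma \ref{does not depend} applies to the system $(G,C_0(G,A),\mathrm{lt}\otimes\mathrm{id})$ and a \emph{single} well-chosen regular covariant representation (built from the multiplication representation of $C_0(G,A)$ on $l^p(G,E)$) already computes the reduced norm; it then conjugates that representation by the invertible isometry $V(\delta_s\otimes\delta_t\otimes\xi)=\delta_{st}\otimes\delta_t\otimes\xi$ to untwist its image into $\overline{M}_G^p\otimes_p\mathbb{C}I_{l^p(G)}\otimes_p A$. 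You instead keep the supremum over \emph{all} isometric regular covariant representations and split each one fibrewise via the idempotents $\pi_0(\delta_x\otimes 1_A)$, reducing every block to a right-translate of the copy of $\Phi_4$ built from the fibre representation $\pi_0^x$ of $A$ and invoking Lemma \ref{isometric tensor} for representation-independence. This is workable and more hands-on (your matrix-unit formula $\Phi_4(F)=\sum_{t,u}e_{t,u}\otimes F(tu^{-1},t)$ also makes density of the range and the equivariance computation cleaner than the paper's), but it rests on one unproved step that the paper's route is specifically designed to avoid: that a nondegenerate isometric representation of $C_0(G,A)$ on an $L^p$ space really is an $l^p$-direct sum of the fibres $E_0^x$ with each $\pi_0^x$ isometric. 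For $p\ne 2$ this requires the structure theory of contractive idempotents with contractive complements on $L^p$ spaces; the fact is true and available in the cited literature, but if you prefer not to import it you can substitute the paper's device (Lemma \ref{C0} together with Lemma \ref{does not depend}) wholesale.

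Two smaller points. Your identification of $\Phi_4$ as the integrated form of the regular covariant representation attached to evaluation at $e$ is correct and gives the contractive direction cheaply; note that evaluation at $e$ is only contractive, not isometric, which is exactly why it cannot also be used for the lower bound. On the final dichotomy, your argument that $K\otimes 1_A\notin\overline{M}_G^1\otimes_1 A$ for $K\in\mathcal{K}(l^1(G))\setminus\overline{M}_G^1$ is more careful than the paper, which deduces the equivalence directly from Theorem \ref{compact}; just implement the slice by a vector functional $T\mapsto \eta\bigl(T(\cdot\otimes\xi)\bigr)$ with $\eta(\xi)=1$, since an abstract norm-one functional on $A$ does not obviously induce a bounded slice map on a spatial $L^p$ tensor product.
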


To prove Proposition \ref{equivariant for the compacts}, we make some preparation.

\begin{lem}\label{C0}
Let $G$ be a countable discrete group and $A$ be a separable unital $L^p$ operator algebra.
If $A$ has unique $L^p$ operator matrix norms,
then so does $C_0(G,A)$.
\end{lem}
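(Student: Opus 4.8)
The plan is to reduce the statement to the unique matrix norms of $A$, computed fiberwise. First note that $C_0(G,A)=c_0(G,A)$ is separable (since $G$ is countable and $A$ is separable) and is itself an $L^p$ operator algebra, so Definition \ref{matrix norm} applies to it. Identifying $M_n(C_0(G,A))\cong C_0(G,M_n(A))$ with the functions $F\colon G\to M_n(A)$ vanishing at infinity, it suffices to show that for every isometric representation $\pi\colon C_0(G,A)\to\mathcal B(L^p(X,\mu))$ on a separable $\sigma$-finite $L^p$ space and every such $F$,
\begin{equation}\label{eq:fiber}
\|F\|_{M_n(C_0(G,A)),\,\pi}=\sup_{g\in G}\|F(g)\|_{M_n(A)},
\end{equation}
where $\|\cdot\|_{M_n(A)}$ denotes the representation-independent matrix norm furnished by the unique $L^p$ operator matrix norms of $A$. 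Indeed, the right-hand side of \eqref{eq:fiber} does not depend on $\pi$, so applying \eqref{eq:fiber} to two isometric representations $\pi,\sigma$ shows that $\sigma\circ\pi^{-1}$ is completely isometric, which is exactly the desired conclusion.

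To analyse a fixed $\pi$, I would exploit that $G$ is discrete and $A$ is unital: the elements $e_g:=\chi_{\{g\}}\otimes 1_A\in C_0(G,A)$ are pairwise orthogonal idempotents of norm one, their finite partial sums form an approximate identity, and the corner $e_gC_0(G,A)e_g$ is isometrically isomorphic to $A$ via $f\mapsto f(g)$. Setting $Q_g:=\pi(e_g)$, these are pairwise orthogonal norm-one idempotents on $L^p(X,\mu)$, and $E_g:=\operatorname{ran}Q_g$ is a separable $\sigma$-finite $L^p$ space on which $a\mapsto Q_g\pi(ae_g)Q_g$ is an isometric representation of $A$. The main step is to show that the family $\{Q_g\}$ induces an isometric $\ell^p$-direct-sum decomposition $L^p(X,\mu)=\bigl(\bigoplus_{g\in G}^{p}E_g\bigr)\oplus E_\infty$, with $E_\infty=\bigcap_g\ker Q_g$ playing no role once one restricts to the essential subspace of $\pi$. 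I would derive this from the structure theory of norm-one idempotents on $L^p$ spaces: each $Q_g+Q_h$ is again a contractive idempotent, and Lamperti-type results force such projections to be band projections after an isometric identification, whence $\|Q_g\xi+Q_h\zeta\|^p=\|Q_g\xi\|^p+\|Q_h\zeta\|^p$; passing to the whole family yields the $\ell^p$ decomposition. This is the step I expect to be the main obstacle, both because contractive (as opposed to spatial) idempotents on $L^p$ are genuinely less rigid than Hilbert-space projections and because one must control the degenerate summand $E_\infty$ and the separability/$\sigma$-finiteness of the fibers $E_g$.

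Granting the decomposition, \eqref{eq:fiber} follows quickly. The corner identity $\pi(ae_g)=Q_g\pi(ae_g)Q_g$ shows that for finitely supported $F$ the amplification $\pi_n(F)$ is block diagonal with respect to $\bigoplus_g^p E_g^{\,n}$, its $g$-block being the image of $F(g)\in M_n(A)$ under the isometric representation of $A$ on $E_g$; by the unique matrix norms of $A$ this block has norm exactly $\|F(g)\|_{M_n(A)}$. A block-diagonal operator on an $\ell^p$-direct sum has norm equal to the supremum of its block norms, and since $F$ vanishes at infinity this supremum equals $\sup_g\|F(g)\|_{M_n(A)}$; the general $F\in C_0(G,M_n(A))$ follows by approximation in the $C_0$-norm. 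The lower bound ``$\geq$'' in \eqref{eq:fiber} needs only that each $Q_g$ is contractive, via $\|F(g)\|_{M_n(A)}=\|Q_g^{(n)}\pi_n(F)Q_g^{(n)}\|\le\|\pi_n(F)\|$, so the entire content lies in the $\ell^p$-orthogonality of the ranges. Applying \eqref{eq:fiber} to both $\pi$ and $\sigma$ then shows $\sigma\circ\pi^{-1}$ is completely isometric, i.e.\ $C_0(G,A)$ has unique $L^p$ operator matrix norms.
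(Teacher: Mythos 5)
Your overall strategy (reduce to the unique matrix norms of $A$ fiberwise, via the central idempotents $e_g=\chi_{\{g\}}\otimes 1_A$ and an $\ell^p$-direct-sum decomposition of the representation space) is reasonable, and it is essentially a from-scratch reproof of the result the paper merely cites: the paper's own proof is two lines, quoting Phillips--Viola \cite[Lemma 5.15]{Lp AF} to get unique $L^p$ operator matrix norms for $C_c(G,A)$ and then passing to $C_0(G,A)$ by density. The problem is that the step you yourself flag as the main obstacle is justified by a claim that is false. You assert that if $Q_g,Q_h$ are contractive idempotents with $Q_gQ_h=Q_hQ_g=0$ and $Q_g+Q_h$ contractive, then Lamperti-type rigidity makes them band projections, whence $\|Q_g\xi+Q_h\zeta\|^p=\|Q_g\xi\|^p+\|Q_h\zeta\|^p$. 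This fails for every $p\in[1,\infty)\setminus\{2\}$: on $L^p(\mathbb{R})$ let $Qf(x)=\tfrac12\big(f(x)+f(-x)\big)$ and $R=I-Q$ be the projections onto even and odd functions. Both are contractive, $QR=RQ=0$, and $Q+R=I$ is contractive; yet for $\xi=\chi_{[-1,1]}\in\operatorname{ran}Q$ and $\zeta=\chi_{[0,1]}-\chi_{[-1,0]}\in\operatorname{ran}R$ one has $\|\xi+\zeta\|_p^p=2^p$ while $\|\xi\|_p^p+\|\zeta\|_p^p=4$. So contractivity of disjoint idempotents and of their sums genuinely does not yield the $\ell^p$-orthogonality on which your whole computation of $\|\pi_n(F)\|$ as a supremum of block norms rests; bicontractive projections on $L^p$ need not be band projections.

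What rescues the decomposition is that $\pi$ is isometric on the whole corner algebra, not merely contractive on its idempotents. For finite $F\subseteq G$ and $g\in F$, the elements $u_\theta=\chi_F\otimes 1_A+(e^{i\theta}-1)\chi_{\{g\}}\otimes 1_A$ satisfy $\|u_\theta\|=\|u_{-\theta}\|=1$ and $u_\theta u_{-\theta}=\chi_F\otimes 1_A$, so $\pi(u_\theta)=\exp\big(i\theta Q_g\big)$ restricts to an invertible isometry of the $L^p$ space $\operatorname{ran}\pi(\chi_F\otimes 1_A)$ for every $\theta$; hence $Q_g$ restricted there is a \emph{hermitian} idempotent, and it is hermitian idempotents (not general contractive or bicontractive ones) that are forced to be band projections when $p\neq 2$, giving the $\ell^p$-orthogonality of the $E_g$ (the case $p=2$ is immediate since contractive idempotents on Hilbert space are orthogonal projections). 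Note that in the even/odd example above the map $(a,b)\mapsto aQ+bR$ is not isometric for the supremum norm on $\mathbb{C}^2$, which is precisely the extra information this argument exploits. The remaining ingredients of your proof (ranges of contractive projections on $L^p$ are again separable $\sigma$-finite $L^p$ spaces, the corner representations of $A$ on $E_g$ are isometric, block-diagonal operators on $\ell^p$-sums have norm the supremum of the block norms, and the density of $C_c$ in $C_0$) are fine; but as written the central step would not survive scrutiny, and you should either repair it along the lines above or simply cite \cite[Lemma 5.15]{Lp AF} as the paper does.
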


\begin{proof}
Let $\pi:C_0(G,A)\rightarrow \mathcal{B}(L^p(X,\mu))$ and $\sigma:C_0(G,A)\rightarrow \mathcal{B}(L^p(Y,\nu))$ be two isometric representations. By \cite[Lemma 5.15]{Lp AF}, $C_c(G,A)$ has unique $L^p$ operator norms. For any $F\in M_n(C_c(G,A))$, we have $\| \mathrm{id}_{M_{n}}\otimes\pi(F)\|=\| \mathrm{id}_{M_{n}}\otimes\sigma(F)\|$. Note that $C_c(G,A)$ is dense in $C_0(G,A)$.
Then $C_0(G,A)$ has unique $L^p$ operator matrix norms.
\end{proof}

The following result concerning compact operators on $L^{p}$ spaces will be useful in the proof of Proposition \ref{equivariant for the compacts}.

\begin{thm}[{\cite[Theorem 4.3]{Gardella and Thiel}}]\label{compact}
Let $p\in [1,\infty)$, $G$ be a discrete group, and $\mathrm{lt}:G\rightarrow \mathrm{Aut}(C_{0}(G))$ be the isometric action induced by the left translation of $G$ on itself. Then there are natural isometric isomorphisms:
$$F^{p}\left(G, C_{0}(G), \mathrm{lt}\right) \stackrel{\kappa}{\longrightarrow} F_{\lambda}^{p}\left(G, C_{0}(G), \mathrm{lt}\right) \stackrel{\iota}\longrightarrow \overline{M}_{G}^{p}.$$
Moreover, the right-hand side equals $\mathcal{K}\left(l^{p}(G)\right)$ when $p>1$, and is strictly smaller than $\mathcal{K}\left(l^{1}(G)\right)$ when $p=1$ and $G$ is infinite.
\end{thm}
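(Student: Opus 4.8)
The plan is to realize both crossed products concretely on $l^p(G)$ through a single explicit covariant representation and to show that this representation simultaneously computes the full norm, the reduced norm, and the operator norm in $\overline{M}_{G}^{p}$. First I would introduce the homomorphism $\Phi\colon C_{c}(G,C_{0}(G))\rightarrow \mathcal{B}(l^{p}(G))$ given by the kernel formula $(\Phi(F)h)(t)=\sum_{s\in G}F(s,t)h(s^{-1}t)$, which is the scalar ($A=\mathbb{C}$) case of the map in (\ref{Phi4}). A direct computation shows that the $(t,s')$-matrix entry of $\Phi(F)$ is $F(ts'^{-1},t)$; since $F$ has finite support, $\Phi(F)$ is a finitely supported matrix, so $\Phi(F)\in \overline{M}_{G}^{p}$, and by choosing $F$ supported on a single pair one realizes each matrix unit. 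Hence $\Phi$ is a homomorphism whose range contains the finitely supported matrices and is therefore dense in $\overline{M}_{G}^{p}$. One checks that $\Phi$ is the integrated form of a nondegenerate contractive covariant representation of $(G,C_{0}(G),\mathrm{lt})$, so that $\Phi$ is contractive for the full (hence also the reduced) norm, giving the inequality $\|\Phi(F)\|\leq\|F\|_{F^{p}_{\lambda}(G,C_{0}(G),\mathrm{lt})}\leq\|F\|_{F^{p}(G,C_{0}(G),\mathrm{lt})}$.

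The heart of the argument is the reverse inequality $\|(\rho\rtimes v)(F)\|\leq\|\Phi(F)\|$ for \emph{every} nondegenerate contractive covariant representation $(\rho,v)$ of $(G,C_{0}(G),\mathrm{lt})$ on an $L^{p}$ space $E$. The minimal idempotents $e_{t}:=\chi_{\{t\}}\in C_{0}(G)$ satisfy $e_{t}e_{t'}=\delta_{t,t'}e_{t}$, $\sum_{t}e_{t}=1$ in the strong operator topology, and the covariance relation $\mathrm{lt}_{s}(e_{t})=e_{st}$. Writing $P_{t}:=\rho(e_{t})$ and $E_{0}:=P_{e}E$, I would invoke the representation theory of the commutative $L^{p}$ operator algebra $c_{0}(G)$ to see that the $P_{t}$ are $L^{p}$-projections furnishing an $L^{p}$-direct-sum decomposition $E\cong l^{p}(G,E_{0})$, under which $v_{s}$ becomes translation and $P_{t}$ the $t$-th coordinate projection. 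With respect to this identification $\rho\rtimes v$ is carried to $\Phi\otimes \mathrm{id}_{E_{0}}$ acting on $l^{p}(G)\otimes_{p}E_{0}$. Since $\Phi(F)$ is a finitely supported scalar matrix, the amplification invariance of the $L^{p}$ matrix norm gives $\|\Phi(F)\otimes \mathrm{id}_{E_{0}}\|=\|\Phi(F)\|$, which yields the desired bound. Applying this to all contractive covariant representations shows $\|F\|_{F^{p}(G,C_{0}(G),\mathrm{lt})}=\|\Phi(F)\|$, and applying it to the regular ones shows $\|F\|_{F^{p}_{\lambda}(G,C_{0}(G),\mathrm{lt})}=\|\Phi(F)\|$ as well. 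Thus $\kappa$ and $\iota$ are isometric isomorphisms, and full equals reduced without any amenability hypothesis on $G$.

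I expect the principal obstacle to be exactly this decomposition step: proving that the contractive idempotents $P_{t}=\rho(\chi_{\{t\}})$ assemble into a genuine $L^{p}$-direct-sum decomposition $E\cong l^{p}(G,E_{0})$ intertwining $v$ with translation. There is no Hilbert-space orthogonality to rely on, so one must use the $L^{p}$-specific structure theory, namely Lamperti's description of $L^{p}$ isometries together with the theory of $L^{p}$-projections and hermitian idempotents arising from representations of $c_{0}(G)$. The accompanying fact that amplifying a scalar $L^{p}$ matrix by $\mathrm{id}_{E_{0}}$ leaves its operator norm unchanged is also essential and relies on the spatial $L^{p}$ tensor structure rather than on a formal $C^{*}$-identity.

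Finally, for the ``moreover'' clause I would argue directly about $\overline{M}_{G}^{p}$. Each $M_{F}^{p}$ consists of finite-rank operators, so $\overline{M}_{G}^{p}\subseteq\mathcal{K}(l^{p}(G))$ for all $p$. When $p>1$, the conjugate exponent $q$ (with $1/p+1/q=1$) is finite, so finitely supported functionals are dense in $l^{q}(G)$; consequently every rank-one operator $\xi\otimes\eta$, and hence every finite-rank operator, is an operator-norm limit of elements of $\bigcup_{F}M_{F}^{p}$, giving $\overline{M}_{G}^{p}=\mathcal{K}(l^{p}(G))$. When $p=1$ and $G$ is infinite, choosing $\xi\in l^{1}(G)$ and $\eta\in l^{\infty}(G)\setminus c_{0}(G)$ produces a rank-one (hence compact) operator $\xi\otimes\eta$ that cannot be approximated in operator norm by finitely supported matrices, since such approximation would force $\eta\in c_{0}(G)$; this exhibits $\overline{M}_{G}^{1}\subsetneq\mathcal{K}(l^{1}(G))$ and completes the proof.
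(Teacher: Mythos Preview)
The paper does not prove this theorem; it is quoted as \cite[Theorem 4.3]{Gardella and Thiel} and used as a black box in the proof of Proposition \ref{equivariant for the compacts}. There is therefore no proof in the present paper to compare your proposal against.

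That said, your sketch is a coherent and essentially correct outline of how such a result is established. The strategy of showing that every contractive covariant representation of $(G,C_0(G),\mathrm{lt})$ is spatially equivalent to an amplification $\Phi\otimes\mathrm{id}_{E_0}$ is exactly the right one, and you have correctly identified the crux: proving that the idempotents $P_t=\rho(\chi_{\{t\}})$ give an $L^p$-direct-sum decomposition $E\cong l^p(G,E_0)$. For $p\neq 2$ this follows from the classical fact that contractive (equivalently, bicontractive or hermitian) idempotents on an $L^p$ space are conditional-expectation--type projections onto band subspaces, so that mutually annihilating families of them yield genuine $L^p$-decompositions; for $p=2$ it is elementary. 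One point you pass over is that the covariance relation gives isometries $v_t|_{E_0}\colon E_0\to P_tE$, and you need these to be \emph{surjective} so that each summand is a copy of $E_0$ and $v$ becomes the left-regular representation; this follows since $v_{t^{-1}}$ provides the inverse. Your amplification step $\|\Phi(F)\otimes\mathrm{id}_{E_0}\|=\|\Phi(F)\|$ is correct for finitely supported matrices because the spatial $L^p$ tensor norm of $M_F^p\otimes_p\mathcal{B}(E_0)$ agrees with the operator norm on $l^p(F,E_0)$, and scalar matrices have norm independent of the coefficient space. Your argument for the ``moreover'' clause is also correct.
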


\begin{proof}[Proof of Proposition \ref{equivariant for the compacts}.]
We give the proof by proving two key claims.

{\bf Claim 1.} $\Phi_4: F^{p}\left(G, C_{0}(G,A), \mathrm{lt}\otimes \mathrm{id}\right)\rightarrow \overline{M}_{G}^{p}\otimes_p A$ is an isometric isomorphism.

Since $G$ is amenable, by \cite[Theorem 7.1]{Phillips look like}), $F^{p}\left(G, C_{0}(G,A), \mathrm{lt}\otimes \mathrm{id}\right)$ is isometrically isomorphic to $F_{\lambda}^{p}\left(G, C_{0}(G,A), \mathrm{lt}\otimes \mathrm{id}\right)$; we denote by $\pi$ this isomorphism.
It suffices to show that $F^p_\lambda(G,C_0(G,A),\mathrm{lt}\otimes \mathrm{id})$ is isometrically isomorphic to $\overline{M}_{G}^{p}\otimes_p A$.

Firstly, we construct an isometric representation of $\overline{M}_{G}^{p}\otimes_p A$ and  $F^p_\lambda(G,C_0(G,A),\mathrm{lt}\otimes \mathrm{id})$ by using the property of unique $L^p$ operator matrix norms.
Since $A$ has unique $L^p$ operator matrix norms,
by Lemma \ref{isometric tensor}, the norm on spatial $L^p$ operator tensor product $\overline{M}_{G}^{p}\otimes_p A$ is independent of the choice of isometric representation of $A$.
Let $\pi_0:A\rightarrow \mathcal{B}(E)$ be a nondegenerate $\sigma$-finite isometric representation of $A$. Then there is a natural isometric representation of $\overline{M}_{G}^{p}\otimes_p A$ on $l^p(G,E)$.

Furthermore, there exists a  canonical nondegenerate $\sigma$-finite isometric representation $\pi_1$ of $C_0(G,A)$ on $l^p(G,E)$ which maps $f\otimes a$ to $M_f\otimes \pi_0(a)$, where $f\in C_0(G), a\in A$, and $M_f$ is the multiplication operator on $l^p(G)$. Let   $(\pi,\lambda_p^{l^p(G,E)})$ be its associated regular covariant representation. Then, by Lemmas \ref{does not depend} and \ref{C0}, the integrated form $\pi\rtimes\lambda_p^{l^p(G,E)}$ is an isometric representation of $F^p_\lambda(G,C_0(G,A),\mathrm{lt}\otimes \mathrm{id})$.


Now we are going to construct an isometric isomorphism from $F^p_\lambda(G,C_0(G,A),\mathrm{lt}\otimes \mathrm{id})$ to $\overline{M}_G^p\otimes_p A$.

We define an invertible isometry $V:l^p(G)\otimes_p l^p(G)\otimes_p E\rightarrow l^p(G)\otimes_p l^p(G)\otimes_p E$ as $\delta_s\otimes\delta_t\otimes\xi\mapsto \delta_{st}\otimes \delta_{t}\otimes \xi$, where $\delta_s$ is the canonical basis of $l^p(G)$ for all $s\in G$ and $\xi\in E$.
We denote by $I_{l^p(G)}$ and $I_E$ the identity operator on $l^p(G)$ and $E$, respectively. A direct computation shows that $$\begin{aligned} V\pi(f\otimes a)(\delta_s\otimes\delta_t\otimes\xi)&=
V(\delta_s\otimes\pi_0((\mathrm{lt}\otimes\mathrm{id})_{s^{-1}}(f\otimes a))(\delta_t\otimes\xi))\\
&= V(\delta_s\otimes M_{(\mathrm{lt})_{s^{-1}}(f)}\delta_t\otimes a\xi))\\
&=V(\delta_s\otimes f(st)\delta_t\otimes a\xi)\\
&=f(st)\delta_{st}\otimes \delta_{t}\otimes a\xi\\
&=(M_f\otimes I_{l^p(G)}\otimes a)(\delta_{st}\otimes \delta_{t}\otimes \xi)\\
&=(M_f\otimes I_{l^p(G)}\otimes a)(V(\delta_s\otimes \delta_t\otimes\xi)). \end{aligned}$$
It follows that \begin{align}\label{pi}
V\pi(f\otimes a)V^{-1}= M_f\otimes I_{l^p(G)}\otimes a.
\end{align}

It is easy to check that $\lambda_p^{l^p(G,E)}=\lambda_p\otimes I_{l^p(G)}\otimes I_E$, where $\lambda_p:G\rightarrow \mathcal{B}(l^p(G))$ is the left-regular representation of $G$ on $l^p(G)$.
For each $r\in G$, one can check that  $$\begin{aligned} V(\lambda_p(r)\otimes I_{l^p(G)} \otimes I_E)V^{-1}(\delta_s\otimes\delta_t\otimes\xi)&=
 V(\lambda_p(r)\otimes I_{l^p(G)} \otimes I_E)(\delta_{st^{-1}}\otimes\delta_{t}\otimes\xi)\\
&=V(\delta_{rst^{-1}}\otimes\delta_t\otimes \xi)\\
&=\delta_{rs}\otimes \delta_{t}\otimes\xi\\
&=(\lambda_p(r)\otimes I_{l^p(G)}\otimes I_E)(\delta_s\otimes\delta_t\otimes\xi). \end{aligned}$$
It follows that \begin{align}\label{lambda}
V( \lambda_p(r)\otimes I_{l^p(G)}\otimes I_E)V^{-1}=\lambda_p(r)\otimes I_{l^p(G)}\otimes I_E.\end{align}

Since $f\in C_0(G)$, by (\ref{pi}) and (\ref{lambda}), we have  $V(F^p_\lambda(G,C_0(G,A),\mathrm{lt}\otimes\mathrm{id}))V^{-1}\cong \overline{M}_G^p\otimes_p \mathbb{C}I_{l^p(G)}\otimes_p A$.
Obviously, $\overline{M}_G^p\otimes_p \mathbb{C}I_{l^p(G)}\otimes_p A$ is isometrically isomorphic to $\overline{M}_G^p\otimes_p A$.
Since $V$ is an isometry, it follows that $F^{p}_\lambda(G,C_{0}(G,A),\mathrm{lt}\otimes \mathrm{id})$ is isometrically isomorphic to $\overline{M}_G^p\otimes_p A$. We denote this isomorphism by $\varphi_4$.
It is easy to check that $\Phi_4=\varphi_4\circ \pi$. Hence $\Phi_4$ is an isometric isomorphism from $F^{p}(G,C_{0}(G,A),\mathrm{lt}\otimes \mathrm{id})$ onto $\overline{M}_G^p\otimes_p A$.
This proves Claim 1.

By Theorem \ref{compact}, $\overline{M}_G^p\otimes_p A$ equals $\mathcal{K}(l^{p}(G))\otimes_{p} A$ if and only if either $p>1$ or $G$ is finite.


{\bf Claim 2.} $\Phi_{4}$ is equivariant for the action $(\mathrm{rt}\otimes \alpha)\otimes \mathrm{id}$ of $G$ on $F^{p}(G,C_{0}(G,A),\mathrm{lt}\otimes \mathrm{id})$ and the action $\mathrm{Ad}\rho\otimes\alpha$ of $G$ on $\overline{M}_{G}^{p}\otimes_{p} A$.

For $F\in C_{c}(G\times G,A)\subseteq F^{p}(G,C_{0}(G,A),\mathrm{lt}\otimes \mathrm{id})$ and $h\in C_{c}(G,E)\subseteq l^{p}(G,E)$, we have $\Phi_{4}(F)h(t)=\sum_{s\in G}F(s,t)h(s^{-1}t)$. For $r\in G$, $T\in \mathcal{K}(l^p(G))$ and $a\in A$, we have $(\mathrm{Ad}\rho\otimes\alpha)_{r}(T\otimes a)=(\mathrm{Ad}\rho)_r(T)\otimes \alpha_r(a)=(\rho_r\otimes I_E)((I_{l^p{G)}}\otimes\alpha_r)(T\otimes a))(\rho_{r^{-1}}\otimes I_E).$
Hence
$$\begin{aligned} (\mathrm{Ad}\rho\otimes\alpha)_{r}(\Phi_{4}(F))h(t)&=
(\rho_r\otimes I_E)((I_{l^p{G)}}\otimes\alpha_r)(\Phi_{4}(F)))(\rho_{r^{-1}}\otimes I_E)h(t)\\
&=((I_{l^p{G)}}\otimes\alpha_r)(\Phi_{4}(F)))(\rho_{r^{-1}}\otimes I_E)h(tr)\\
&=\sum_{s\in G}\alpha_r(F(s,tr))(\rho_{r^{-1}}\otimes I_E)h(s^{-1}tr)\\
&=\sum_{s\in G}\alpha_r(F(s,tr))h(s^{-1}t)\\
&=\sum_{s\in G}((\mathrm{rt}\otimes\alpha)\otimes \mathrm{id})_{r}(F)(s,t)h(s^{-1}t)\\
&=\Phi_{4}( ((\mathrm{rt}\otimes\alpha)\otimes \mathrm{id})_{r}(F))h(t).
 \end{aligned}$$
Therefore, $(\mathrm{Ad}\rho\otimes\alpha)_{r}\circ\Phi_{4}=\Phi_{4}\circ( (\mathrm{rt}\otimes\alpha)\otimes \mathrm{id})_{r}$ as required.
\end{proof}

\begin{proof}[Proof of Theorem \ref{T:main}]
Statements (i), (ii) and (iii) in Theorem \ref{T:main} follow from Propositions \ref{change}, \ref{injective}, \ref{change for trival action} and \ref{equivariant for the compacts}. By Lemma \ref{1-3} and Proposition \ref{equivariant for the compacts},
the map $\Phi$ is equivariant for the
double dual action $\hat{\hat{\alpha}}$ of $G$ on $F^{p}(\hat{G},F^{p}(G,A,\alpha),\hat{\alpha})$, and the action $\mathrm{Ad}\rho\otimes\alpha$ of $G$ on $\mathcal{K}(l^{p}(G))\otimes_{p}A$.
\end{proof}


\section*{Acknowledgements}

The first author is supported by the National Natural Science Foundation of China (grant numbers 12201240, 11971253), the China Postdoctoral Science Foundation (grant number 2022M711310),  the Natural Science Foundation of Fujian Province (grant number 2020J05206) and the Scientific Research Project of Putian University (grant number 2020001).
The second author is supported by the National Natural Science Foundation of China (grant number 12171195).



\end{document}